\newtheorem{thm}{Theorem}[section]
\newtheorem{prop}[thm]{Proposition}
\newtheorem{lem}[thm]{Lemma}
\newcommand\R{\mathbb{R}}
\DeclareMathOperator*{\supp}{supp}
\numberwithin{equation}{section}
\numberwithin{thm}{section}
\newcommand{\vertiii}[1]{{\left\vert\kern-0.25ex\left\vert\kern-0.25ex\left\vert #1 
    \right\vert\kern-0.25ex\right\vert\kern-0.25ex\right\vert}}
\title[Besov spaces generated by the Neumann Laplacian]
{Besov spaces generated by the Neumann Laplacian}
\author[K. Taniguchi]{Koichi Taniguchi}
\address{ 
Koichi Taniguchi \endgraf 
Department of Mathematics \endgraf 
Chuo University \endgraf 
1-13-27, Kasuga, Bunkyo-ku \endgraf 
Tokyo 112-8551 \endgraf 
Japan} 
\email{koichi-t@gug.math.chuo-u.ac.jp}
\keywords{Besov spaces, the Neumann Laplacian, spectral multipliers, bilinear estimates}
\begin{document}

\footnote[0]
{2010 {\it Mathematics Subject Classification.} 
Primary 30H25; Secondary 46F05;}

\begin{abstract}
The purpose of this paper is to give a definition and prove the fundamental properties of Besov spaces generated by the Neumann Laplacian. 
As a by-product of these results, the fractional Leibniz rule in these Besov spaces is obtained. 
\end{abstract}
\maketitle


\section{Introduction}\label{sec:1}
The Besov spaces play an important role in studying approximation and regularity of functions, and 
have many applications to partial differential equations. 
There are a lot of literatures on characterization of Besov spaces (see, e.g., Triebel \cite{Triebel_1983,Triebel_1992,Triebel_2006}). 
We are concerned with Besov spaces characterized by differential operators via the spectral approach (see \cite{BenZhe-2010,BuDuYa-2012,BuDu-2015,DP-2005,GV-2003,IMT-Besov,IMT-bilinear,IMT-bdd,KePe-2015} and the references therein). 
The purpose of this paper is to give a definition of Besov spaces generated by the Neumann Laplacian on a domain, and prove their fundamental properties; completeness and embedding relations etc.
The results in this paper would be applicable 
to the study of the Neumann problem to partial differential equations. \\

Let us state the known results on Besov spaces 
over a domain $\Omega\subsetneqq\mathbb R^n$. 
If $\Omega$ is the half space $\mathbb R^n_+$, an exterior or a bounded 
domain with smooth boundary, then 
the theory of Besov spaces is well established by extending functions on $\Omega$ to $\mathbb R^n$ or the restriction method of functions on $\mathbb R^n$ to $\Omega$ (see, e.g., Triebel \cite{Triebel_1983,Triebel_1992,Triebel_2006}). 
In this paper we adopt the direct way, namely, we shall define Besov spaces on $\Omega$ as subspaces of the collection of distributions on $\Omega$ via explicit norms. 
In the formulation we will face on the problem to determine 
what kinds of spaces over $\Omega$ corresponding to 
the Schwartz space are. 
Actually, when the Dirichlet Laplacian is considered, 
we found the spaces of test functions and distributions on an arbitrary open set via the spectral approach, and succeeded in 
defining the Besov spaces on the open set (see \cite{IMT-Besov}).
In this paper we shall define the Besov spaces generated by the Neumann Laplacian, 
whose main idea comes from \cite{IMT-Besov}. 
Especially, the definition of homogeneous Besov spaces is also 
given by subspaces of the quotient spaces of a class of distributions on $\Omega$ which will be discussed 
in \S \ref{sec:5}.
Once the definition of Besov spaces on $\Omega$ 
is established, we are able to 
obtain the bilinear estimates in the Besov spaces along the same argument as in \cite{IMT-bilinear}. These estimates are also called fractional Leibniz rule (or the Kato-Ponce inequality), and are of great  importance to study the well-posedness for nonlinear partial differential equations. 
This topic will be also discussed in \S\ref{sec:7}.\\

Throughout this paper, we assume that $\Omega$ is a Lipschitz domain. 
Here, a domain $\Omega$ is said to be of Lipschitz if it is represented, locally near the boundary, as the region above of the graph of a Lipschitz function. We consider 
the Neumann Laplacian $H_{N}=-\Delta$ on $L^2(\Omega)$ with the domain 
\[
\mathcal D(H_{N})=
\big\{ f\in H^1(\Omega): \Delta f \in L^2(\Omega)
\big\}
\]
such that
\[
\int_{\Omega}(H_N f) (x) g(x)\,dx
=
\int_\Omega \nabla f(x)\cdot\overline{\nabla g(x)}\,dx
\]
for any $f\in\mathcal D(H_{N})$ and $g \in H^1(\Omega)$. 
The operator $H_N$ is a non-negative self-adjoint operator on $L^2(\Omega)$. 
Hence, thanks to the spectral theorem, there exists a spectral resolution $\{E_{H_N}(\lambda)\}_{\lambda\in\mathbb R}$ of the identity for $H_N$, and  we write 
\[
H_N = \int_{-\infty}^\infty \lambda\, dE_{H_N}(\lambda).
\]
For a Borel measurable function $\phi$ on $\mathbb R$, an operator $\phi(H_N)$ is defined by 
\[
\phi(H_N) = \int_{-\infty}^\infty \phi(\lambda)\, dE_{H_N}(\lambda).
\]
When $\mathrm{vol}(\Omega)=\infty$, the situation is similar to that of the Dirichlet Laplacian, since zero is not an eigenvalue of $H_N$. 
However, if $\mathrm{vol}(\Omega)<\infty$, the situation is different. 
In particular case when $\Omega$ is a bounded and Lipschitz domain, 
the spectrum of $H_N$ is discrete and zero is an eigenvalue of $H_N$. Thus, in this case, 
let $\{\lambda_k\}_{k=1}^\infty$ be the eigenvalues of $H_N$ such that
\begin{equation}\label{EQ:eigenvalues}
0=\lambda_1<\lambda_2<\cdots<\lambda_k<\cdots\quad\text{and}\quad\lim_{k\to\infty}\lambda_k=\infty.
\end{equation}
We denote by $\mathcal E$ the eigenspace associated with zero eigenvalue. It is well known that $\mathcal E$ is the space consisting of all constant functions on $\Omega$. Its orthogonal complement $\mathcal E^{\perp}$ is the space
\[
\mathcal E^{\perp}=\left\{f\in L^2(\Omega):\int_\Omega f(x)\,dx=0\right\}.
\]
Then the space $L^2(\Omega)$ is decomposed as the direct sum of $\mathcal E$ and $\mathcal E^{\perp}$:
\[
L^2(\Omega)=  \mathcal E\oplus \mathcal E^{\perp}.
\]

\vspace{5mm}

This paper is organized as follows. 
In \S\ref{sec:2} we give the definition of Besov spaces generated by $H_N$, and state the main results. 
In \S\ref{sec:3} we prove $L^p$-$L^q$-estimates for spectral multipliers for the Neumann Laplacian $H_N$, which play a crucial role in studying the Besov spaces. 
In \S\ref{sec:4} we prove gradient estimates for the spectral multipliers, which are useful in proving the bilinear estimates. 
In \S\ref{sec:5} we state fundamental properties of the spaces of test functions and distributions on $\Omega$. 
In \S\ref{sec:6} 
the proof of one of the main results is given. 
In \S\ref{sec:7} we give a remark on the bilinear estimates in the Besov spaces.


\section{Statement of results}\label{sec:2}

In this section we state the results. 
To illustrate the results in this paper, 
let us recall the definition of 
Besov spaces on $\mathbb R^n$.  
We denote by $\mathcal S(\mathbb R^n)$
the space of all rapidly decreasing functions on $\R^n$, and 
$\mathcal S'(\mathbb R^n)$ the space of the tempered distributions on $\mathbb R^n$. 
For $0<p,q\le 
\infty$ and $s\in \R$,
the inhomogeneous Besov space 
$B^s_{p,q}(\mathbb R^n)$ 
consists of all $f\in \mathcal S'(\mathbb R^n)$
such that 
\begin{equation} \label{c-norm1}
\big\|\mathscr F^{-1} \big[\psi(|\cdot|)\mathscr F f\big]\big\|_{L^p(\mathbb R^n)}
+ 
\left\| \big\{ 
2^{sj} \big\|\mathscr F^{-1} \big[\phi_j(|\cdot|)\mathscr F f\big] \big\|_{L^p (\mathbb R^n) }  
\big\}_{j \in \mathbb  N}
\right\|_{\ell^q (\mathbb N)}<\infty,
\end{equation}
where $\mathscr F$ is the Fourier transform on $\mathbb R^n$, $\mathscr F^{-1}$ is the inverse Fourier transform, and 
$\{\psi\}\cup\{\phi_j\}_j$ is the Littlewood-Paley partition of unity. 
The homogeneous Besov space 
$\dot{B}^s_{p,q}(\mathbb R^n)$
consists of all 
$f\in \mathcal{S}^\prime_0(\R^n)$ such that  
\begin{equation}\label{c-norm2}
\left\| \big\{ 
2^{sj} \big\|\mathscr F^{-1} \big[ \phi_j(|\cdot|)\mathscr F f \big] \big\|_{L^p (\mathbb R^n) }  
\big\}_{j \in \mathbb  Z}
\right\|_{\ell^q (\mathbb Z)}
<\infty.
\end{equation}
Here $\mathcal S'_0(\mathbb R^n)$ is the dual space of $\mathcal S_0(\mathbb R^n)$, which is 
defined by 
\begin{equation}\label{EQ:S0}
\mathcal S_0(\mathbb R^n)
:=
\left\{
f\in \mathcal S(\mathbb R^n) :
\int_{\mathbb R^n}x^\alpha f(x)\,dx=0
\text{ for any } \alpha \in (\mathbb N\cup \{0\})^n
\right\}
\end{equation}
endowed with the induced topology of $\mathcal S(\mathbb R^n)$. \\

Based on these considerations, 
we divide this section into two subsections; definition of Besov spaces generated by $H_N$ 
in \S\S \ref{sec:2.1}, and  
results on the Besov spaces in \S\S \ref{sec:2.2}.

\subsection{Definition of Besov spaces}\label{sec:2.1}
We begin by introducing the spaces of test functions on $\Omega$ and 
their duals. For this purpose, let us introduce 
the Littlewood-Paley 
partition of unity.  
Let 
$\phi_0$  
be a non-negative and smooth function on $\mathbb R$ such that  
\begin{equation}
\label{EQ:phi1}
{\rm supp \, } \phi _0
\subset \{ \, \lambda \in \mathbb R :
2^{-1} \leq \lambda \leq 2 \, \}
\quad \text{and}\quad
\sum _{j\in\mathbb Z} \phi_0 ( 2^{-j}\lambda) 
 = 1 
 \quad \text{for } \lambda > 0,  
\end{equation}
and $ \{ \phi_j \}_{j\in\mathbb Z}$ is 
defined by letting 
\begin{equation} \label{EQ:phi2}
\phi_j (\lambda) := \phi_0 (2^{-j} \lambda) 
 \quad \text{for }  \lambda \in \mathbb R . 
\end{equation}
\vspace{2mm}

\noindent 
{\bf Definition (Test functions and 
distributions on $\Omega$).} 
\begin{enumerate}
\item[(i)] {\rm (}Linear topological spaces
$\mathcal X (\Omega)$ and $\mathcal X^\prime (\Omega)${\rm ).}
A linear topological space 
$\mathcal X (\Omega)$ is 
defined by letting
\begin{equation}\notag 
\mathcal X (\Omega) 
:= \left\{ f \in  L^1 (\Omega) \cap \mathcal D (H_N) :
    H_N^{M} f \in L^1(\Omega ) \cap \mathcal D (H_N) \text{ for any } M \in \mathbb N 
   \right\} 
\end{equation} 
equipped with the family of semi-norms $\{ p_{M}
 (\cdot) \}_{ M = 1 } ^\infty$ 
given by 
\begin{equation*}
\label{EQ:p_M}
p_{M}(f) := 
\| f \|_{ L^1(\Omega)} 
+ \sup _{j \in \mathbb N} 2^{Mj} 
  \| \phi_j (\sqrt{H_N}) f \|_{ L^1(\Omega)} . 
\end{equation*}
Furthermore, $\mathcal X'(\Omega)$ denotes the topological dual of 
$\mathcal X (\Omega)$.

\item[(ii)] {\rm (}Linear topological spaces
$\mathcal Z (\Omega)$ and $\mathcal Z^\prime (\Omega)${\rm ).} 
A linear topological space 
$\mathcal Z (\Omega)$ is 
defined by letting
\begin{equation*}
\begin{split}
\mathcal Z (\Omega) 
& := \left\{ f \in \mathcal X (\Omega) : 
q_{M}(f) < \infty \text{ for any } M \in \mathbb N \right\}\\
%
& = 
\left\{ f \in \mathcal X (\Omega)\cap \mathcal E^{\perp} : \sup_{j \leq 0} 2^{ M |j|} 
    \big\| \phi_j (\sqrt{H_N}) f \big \|_{L^1(\Omega)}
    < \infty 
  \text{ for any } M \in \mathbb N
   \right\} 
\end{split}
\end{equation*}
equipped 
with the family of semi-norms $\{ q_{M} (\cdot) \}_{ M = 1}^\infty$ 
given by 
\begin{equation*}\label{EQ:qV} 
q_{M}(f) := 
\| f \|_{L^1 (\Omega) }
+ \sup_{j \in \mathbb Z} 2^{M|j|}
\left(
|f_0|
+
\| \phi_j (\sqrt{H_N}) f \|_{L^1(\Omega)}
\right),
\end{equation*}
where $f=f_0+f_0^{\perp}$ with $f_0\in \mathcal E$ and $f_0^{\perp}\in \mathcal E^{\perp}$. 
Furthermore, 
$\mathcal Z'(\Omega)$ denotes the topological dual of $\mathcal Z (\Omega)$.
\end{enumerate}

\vspace{5mm}

Let us give a few remarks on $\mathcal X(\Omega)$, $\mathcal Z(\Omega)$ and their dual spaces. 
These spaces provide the basis of definition of our Besov spaces. 
In fact, the spaces are complete, which are assured by Proposition \ref{prop:Fre} below. 
Next, we see that $\mathcal X(\Omega)$ corresponds  to $\mathcal S(\mathbb R^n)$ 
and  $\mathcal Z(\Omega)$ to $\mathcal S_0(\mathbb R^n)$, respectively  (see Proposition \ref{prop:B} in appendix \ref{App:AppendixB}). 
Thus we can proceed the argument.

\vspace{5mm}

When we consider the inhomogeneous Besov spaces, 
a function $\psi$, whose support 
is restricted in the neighborhood 
of the origin, 
is needed. 
More precisely, 
let $\psi \in 
C^\infty_0(\mathbb R)$ be a function
satisfying 
\begin{equation} \label{EQ:psi}
\psi (\lambda^2) 
 + \sum _{j\in\mathbb N}  \phi_j (\lambda) 
 = 1 
 \quad \text{for } \lambda \geq 0.
\end{equation}

\vspace{5mm}

We are now in a position to define Besov spaces
generated by $H_N$.\\

\noindent 
{\bf Definition (Besov spaces).} 
Let $s \in \mathbb R$ and $1 \leq p,q \leq \infty$. Then the Besov spaces 
are defined as follows:  
\begin{enumerate}
\item[(i)] 
The inhomogeneous
Besov spaces $B^s_{p,q} (H_N) $ are defined by letting 
\begin{equation}\notag
B^s_{p,q} (H_N)
:= \left\{ 
    f \in \mathcal X'(\Omega) :
     \| f \|_{B^s_{p,q} (H_N)} < \infty
   \right\} , 
\end{equation}
where
\begin{equation}\label{norm1}
     \| f \|_{B^s_{p,q} (H_N)} 
       := \| \psi (H_N) f \|_{L^p (\Omega) } 
          + \left\| \big\{ 2^{sj} \| \phi_j (\sqrt{H_N}) f \|_{L^p (\Omega) }  
                 \big\}_{j \in \mathbb  N}
          \right\|_{\ell^q (\mathbb N)}. 
\end{equation}

\item[(ii)] 
The homogeneous
Besov spaces $\dot B^s_{p,q} (H_N) $ are defined by letting
\begin{equation} \notag 
\dot B^s_{p,q}  (H_N)
:= \left\{ f \in \mathcal Z'(\Omega) :
     \| f \|_{\dot B^s_{p,q}(H_N)} < \infty 
   \right\} , 
\end{equation}
where
\begin{equation}\label{norm2}
     \| f \|_{\dot B^s_{p,q}(H_N)} 
       := 
       \left\| \big\{ 2^{sj} \| \phi_j (\sqrt{H_N}) f\|_{L^p(\Omega)}
                 \big\}_{j \in \mathbb Z}
          \right\|_{\ell^q (\mathbb Z)}. 
\end{equation}
\end{enumerate}

When $\Omega=\mathbb R^n$, i.e., $H_N=-\Delta$ on $\mathbb R^n$, 
the norms \eqref{norm1} and  \eqref{norm2} are equivalent to the classical ones \eqref{c-norm1} and  \eqref{c-norm2}, respectively, since 
spectral multiplies $\psi(-\Delta)$ and $\phi_j (\sqrt{-\Delta})$ coincide with the Fourier multipliers: 
\[
\psi(-\Delta)=\mathscr F^{-1} \big[\psi(|\cdot|^2)\mathscr F \big], 
\quad 
\phi_j (\sqrt{-\Delta})=\mathscr F^{-1} \big[\phi_j(|\cdot|)\mathscr F \big].
\]

\vspace{2mm}

Let us give some notations and definitions used in this paper. 
We use the notation 
${}_{X'}\langle \cdot, \cdot \rangle_{X}$
of duality pair of a linear topological space 
$X$ and its dual $X'$. 
When $\phi$ is a real-valued 
Borel measurable function on $\mathbb R$, 
the dual operator of an operator $\phi (H_N): \mathcal X (\Omega) \to \mathcal X (\Omega)$ is defined on $\mathcal X' (\Omega)$ by 
\begin{equation*}\label{EQ:X'}
{}_{{\mathcal X'(\Omega)} }
\langle \phi (H_N) f , g \rangle _{\mathcal X (\Omega)} 
:= 
{}_{{\mathcal X'(\Omega)}}\langle f , \phi (H_N) g \rangle_{\mathcal X (\Omega)}  
\end{equation*}
for any $f\in \mathcal X^\prime(\Omega)$ and $g\in \mathcal X(\Omega)$. 
The dual operator on $\mathcal Z'(\Omega)$ is defined in the same way as above. 
We can regard 
functions in the Lebesgue spaces 
as elements in $\mathcal X'(\Omega)$ and $\mathcal Z'(\Omega)$ as follows: 
For $f\in L^1(\Omega)+L^\infty(\Omega)$,  
we identify 
$f$ as an element in $\mathcal{X}^\prime(\Omega)$ and $\mathcal Z'(\Omega)$ by
\begin{equation*}
\label{EQ:regard1}
{}_{\mathcal{X}'(\Omega)} \langle f , g \rangle _{\mathcal{X}(\Omega)} 
=  \int_\Omega f(x)\overline{g(x)}\, dx 
\quad \text{for any $g\in \mathcal{X}(\Omega)$,}
\end{equation*}
\begin{equation*}
\label{EQ:regard2}
{}_{\mathcal{Z}'(\Omega)} \langle f , g \rangle _{\mathcal{Z}(\Omega)} 
=  \int_\Omega f(x)\overline{g(x)}\, dx 
\quad \text{for any $g\in \mathcal{Z}(\Omega)$,}
\end{equation*}
respectively, which are assured by embedding relations
\[
\mathcal X(\Omega)\hookrightarrow L^p(\Omega) \hookrightarrow \mathcal X'(\Omega),
\]
\[
\mathcal Z(\Omega) \hookrightarrow L^p(\Omega) \hookrightarrow \mathcal Z'(\Omega)
\]
for any $1\le p\le \infty$ (see Proposition \ref{prop:embedding} below). \\

\subsection{Statement of results}\label{sec:2.2}

In this subsection we state several results on the Besov spaces generated by $H_N$. 

\begin{thm}\label{thm:1}
Assume that $\Omega$ 
is a Lipschitz domain in $\R^n$ with compact boundary, where $n\ge3$ if 
$\Omega$ is unbounded, and $n\ge1$ if $\Omega$ is bounded. 
Let $s \in \mathbb R$ and $1 \leq p,q \leq \infty$. Then the 
following assertions hold{\rm :}

\begin{enumerate}
\item[(i)] 
{\rm(}Inhomogeneous Besov spaces{\rm)} 

\begin{enumerate}
\item[ (a)] 
$B^s_{p,q} (H_N)$ is independent 
of the choice of $\{ \psi \} \cup \{ \phi_j\}_{j \in \mathbb N}$ 
satisfying 
\eqref{EQ:phi1}, 
\eqref{EQ:phi2} and \eqref{EQ:psi}, 
and enjoys the following{\rm :} 
\begin{equation*}\label{EQ:emb1}
\mathcal X(\Omega) 
\hookrightarrow B^s_{p,q } (H_N)
\hookrightarrow \mathcal X'(\Omega).
\end{equation*} 

\item[ (b)]
$B^s_{p,q}(H_N)$ 
is a Banach space.
\end{enumerate}

\item[(ii)]
{\rm (}Homogeneous Besov spaces{\rm )} 
\begin{enumerate}
\item[ (a)]
$\dot B^s_{p,q} (H_N)$ is independent 
of the choice of $\{ \phi_j \}_{j \in \mathbb Z}$ satisfying \eqref{EQ:phi1} and 
\eqref{EQ:phi2},  
and enjoys the following{\rm :} 
\begin{equation*} \label{EQ:emb2}
\mathcal Z(\Omega) 
\hookrightarrow \dot B^s_{p,q } (H_N)
\hookrightarrow \mathcal Z'(\Omega) . 
\end{equation*}

\item[ (b)]
$\dot B^s_{p,q} (H_N)$ is a Banach space.
\end{enumerate}
\end{enumerate}
\end{thm}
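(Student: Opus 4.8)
The plan is to prove Theorem \ref{thm:1} by following the template established in \cite{IMT-Besov} for the Dirichlet Laplacian, adapting each step to the Neumann setting where the crucial new ingredient is the $L^p$--$L^q$ estimates for spectral multipliers of $H_N$ proved in \S\ref{sec:3} and, for the homogeneous part, the careful treatment of the zero eigenspace $\mathcal E$. The four assertions (i)(a), (i)(b), (ii)(a), (ii)(b) all rest on the same three analytic facts: that $\psi(H_N)$ and $\phi_j(\sqrt{H_N})$ map $\mathcal X(\Omega)$ (resp.\ $\mathcal Z(\Omega)$) continuously into itself, that these operators obey uniform $L^p$-boundedness and the Bernstein-type $L^p$--$L^q$ bounds with the correct powers of $2^j$, and that a Littlewood--Paley square-function-type resolution of the identity holds on the appropriate space of distributions.

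\textbf{Well-definedness and independence of the partition of unity (parts (i)(a), (ii)(a)).} First I would check that the norms \eqref{norm1} and \eqref{norm2} are meaningful on $\mathcal X'(\Omega)$ and $\mathcal Z'(\Omega)$ respectively: since $\phi_j(\sqrt{H_N})$ and $\psi(H_N)$ preserve $\mathcal X(\Omega)$ (and $\phi_j(\sqrt{H_N})$ preserves $\mathcal Z(\Omega)$), their duals act on $\mathcal X'(\Omega)$ (resp.\ $\mathcal Z'(\Omega)$), and by the embeddings $\mathcal X(\Omega)\hookrightarrow L^p(\Omega)\hookrightarrow\mathcal X'(\Omega)$ from Proposition \ref{prop:embedding} the quantity $\|\phi_j(\sqrt{H_N})f\|_{L^p(\Omega)}$ makes sense (as $+\infty$ if the distribution is not in $L^p$). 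For independence of $\{\psi\}\cup\{\phi_j\}$, I would take a second admissible family $\{\tilde\psi\}\cup\{\tilde\phi_j\}$, use that each $\tilde\phi_j$ is supported in finitely many dyadic annuli so that $\tilde\phi_j(\sqrt{H_N})=\tilde\phi_j(\sqrt{H_N})\sum_{|k-j|\le 2}\phi_k(\sqrt{H_N})$, and then apply uniform $L^p$-boundedness of the spectral multipliers from \S\ref{sec:3} together with the almost-orthogonality to conclude $\|f\|$ with one family is controlled by $\|f\|$ with the other (and symmetrically); the inhomogeneous case also requires handling the low-frequency term $\psi(H_N)f$ via finitely many $\phi_j$ and $\tilde\psi$. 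The embeddings $\mathcal X(\Omega)\hookrightarrow B^s_{p,q}(H_N)\hookrightarrow\mathcal X'(\Omega)$ (and the homogeneous analogue) follow: the left embedding from the definition of the semi-norms $p_M$ (resp.\ $q_M$) controlling $2^{sj}\|\phi_j(\sqrt{H_N})f\|_{L^1}$ and then $L^1\to L^p$ via \S\ref{sec:3}; the right embedding by testing $f$ against $g\in\mathcal X(\Omega)$, inserting a Littlewood--Paley decomposition of $g$, pairing $\phi_j(\sqrt{H_N})f$ with $\phi_j(\sqrt{H_N})g$ (with a frequency shift as above), and using Hölder in $x$ and in $j$ together with the rapid decay of the $\mathcal X(\Omega)$-seminorms of $g$.

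\textbf{Completeness (parts (i)(b), (ii)(b)).} This is the main obstacle, as usual for such spectrally-defined spaces. I would take a Cauchy sequence $\{f_m\}$ in $B^s_{p,q}(H_N)$; by the embedding into $\mathcal X'(\Omega)$ just established, $\{f_m\}$ converges to some $f$ in $\mathcal X'(\Omega)$. The issue is to show $f\in B^s_{p,q}(H_N)$ and $f_m\to f$ in the Besov norm. For each fixed $j$, $\phi_j(\sqrt{H_N})f_m$ is Cauchy in $L^p(\Omega)$ (here one must be slightly careful when $p=\infty$, where one uses weak-$*$ compactness or the fact that the relevant kernels put us in a reflexive-like situation, or restricts to a predual argument), hence converges in $L^p$ to some $g_j$; one identifies $g_j=\phi_j(\sqrt{H_N})f$ by testing against $\mathcal X(\Omega)$ and using continuity of the dual operator. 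Then Fatou's lemma in the $\ell^q(2^{sj}L^p)$ norm gives $\|f\|_{B^s_{p,q}(H_N)}\le\liminf_m\|f_m\|<\infty$ and, applied to $f_m-f_{m'}$ with $m'\to\infty$, gives $\|f_m-f\|_{B^s_{p,q}(H_N)}\to 0$. The homogeneous case is identical in structure but requires the additional point that $\mathcal Z'(\Omega)$ is the correct ambient space and that the zero-frequency contributions are handled by the $\mathcal E^\perp$-constraint built into $\mathcal Z(\Omega)$ — concretely, one checks that the limit $f$ lies in $\mathcal Z'(\Omega)$ and that the sum $\sum_{j\in\mathbb Z}\phi_j(\sqrt{H_N})$ reconstructs $f$ on $\mathcal Z'(\Omega)$ modulo the kernel, which is exactly what the quotient-space formulation in \S\ref{sec:5} provides. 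I expect the delicate points to be: (1) the $p=\infty$ (and $q=\infty$) endpoint in passing to the limit, handled by a weak-$*$ argument; and (2) verifying that the candidate limit genuinely belongs to $\mathcal Z'(\Omega)$ rather than merely $\mathcal X'(\Omega)$ in the homogeneous case, which uses the characterization of $\mathcal Z(\Omega)$ and its dual from \S\ref{sec:5}.
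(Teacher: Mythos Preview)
Your proposal is correct and takes essentially the same approach as the paper: the paper's own proof of Theorem~\ref{thm:1} is a one-line reference to Theorem~2.5 in \cite{IMT-Besov}, relying on the spectral-multiplier bounds of \S\ref{sec:3} and the properties of $\mathcal X(\Omega)$, $\mathcal Z(\Omega)$ established in \S\ref{sec:5}, and you have accurately reconstructed what that argument entails. One minor remark: your worry that the limit might lie only in $\mathcal X'(\Omega)$ rather than $\mathcal Z'(\Omega)$ in the homogeneous case is misplaced, since in that case the Cauchy sequence and the embedding live in $\mathcal Z'(\Omega)$ from the outset and Banach--Steinhaus on the Fr\'echet space $\mathcal Z(\Omega)$ gives continuity of the limiting functional directly.
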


The proof of Theorem \ref{thm:1} is similar to that 
of Theorem 2.5 in \cite{IMT-Besov}. \\

The following result states the fundamental properties of the Besov spaces such as duality, lifting properties, and embedding relations.
\begin{thm}\label{prop:property}
Let $\Omega$ be as in Theorem {\rm \ref{thm:1}}, and let $s,s_0\in\mathbb R$ and $1\le p,q,q_0,r\le \infty$. 
Then the following assertions hold{\rm :}
\begin{itemize}
\item[(i)] If  
$1 \leq p,q < \infty$, $1/p + 1/p' = 1$ and $1/q + 1/q' = 1$, 
then the dual spaces of $B^s_{p,q}(H_N)$ and $\dot B^s_{p,q}(H_N)$ 
are $B^{-s}_{p',q'} (H_N)$ and $\dot B^{-s}_{p',q'} (H_N) $, respectively. 
\item[(ii)] 
\begin{itemize}
\item[(a)]
The inhomogeneous Besov spaces 
enjoy the following properties{\rm :} 
\begin{equation}\notag 
\begin{split}
& 
(I + H_N)^{\frac{s_0}{2}}  f 
 \in B^{s-s_0} _{p,q} (H_N) 
 \quad \text{for any } f \in B^s_{p,q} (H_N) {\rm ;} 
\\ 
& 
B^{s+\varepsilon}_{p,q} (H_N)
\hookrightarrow B^s_{p,q_0} (H_N)
\quad 
\text{for any } \varepsilon > 0 {\rm ;}
\\
& 
B^s_{p,q} (H_N)
\hookrightarrow B^{s_0}_{p, q} (H_N)
\quad \text{if } s \geq s_0 {\rm ;}
\\ 
& 
B^{s+ n (\frac{1}{r} - \frac{1}{p})}_{r,q} (H_N) 
 \hookrightarrow 
 B^s_{p,q_0} (H_N)
\quad \text{if 
$1 \leq r \leq p \leq \infty$ and 
$q \leq q_0$}.
\end{split}
\end{equation}
\item[(b)]
The homogeneous Besov spaces enjoy the following
properties{\rm :}
\begin{equation}\notag 
\begin{split}
& 
H_N^{\frac{s_0}{2}} f \in \dot B^{s-s_0} _{p,q} (H_N)
\quad \text{for any } f \in \dot B^s_{p,q} (H_N) {\rm ;}
\\
& 
\dot B^{s+ n (\frac{1}{r} - \frac{1}{p})}_{r,q} (H_N)  
 \hookrightarrow 
 \dot B^s_{p,q_0} (H_N)
\quad \text{if } 1 \leq r \leq p \leq \infty \text{ and } q \leq q_0.
\end{split}
\end{equation}
\end{itemize}
\item[(iii)] We have
\[
L^p(\Omega) \hookrightarrow B^0_{p,2}(H_N), \dot{B}^0_{p,2}(H_N)
\quad \text{if $1<p\le2${\rm ;}}
\]
\[
B^0_{p,2}(H_N), \dot{B}^0_{p,2}(H_N) \hookrightarrow L^p(\Omega)\quad \text{if $2\le p<\infty$.}
\]
\end{itemize}
\end{thm}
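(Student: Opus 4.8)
\textbf{Proof proposal for Theorem \ref{prop:property}.}

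The plan is to reduce everything to three ingredients: the $L^p$-$L^q$ boundedness of spectral multipliers for $H_N$ (the results of \S\ref{sec:3}), the Littlewood--Paley almost-orthogonality identities $\phi_j(\sqrt{H_N})\phi_k(\sqrt{H_N})=0$ for $|j-k|\ge2$ together with $\sum_j\phi_j(\sqrt{H_N})=\mathrm{id}$ on $\mathcal Z'(\Omega)$ (resp. the inhomogeneous partition including $\psi(H_N)$ on $\mathcal X'(\Omega)$), and the functional calculus identities that let us commute powers of $H_N$ or $(I+H_N)$ past the frequency cutoffs. For part (i), duality, I would first show that the pairing ${}_{\mathcal X'(\Omega)}\langle\cdot,\cdot\rangle_{\mathcal X(\Omega)}$ extends to a pairing between $B^s_{p,q}(H_N)$ and $B^{-s}_{p',q'}(H_N)$: given $f\in B^s_{p,q}$ and $g\in B^{-s}_{p',q'}$, write $\langle f,g\rangle=\sum_j\langle\phi_j(\sqrt{H_N})f,\widetilde\phi_j(\sqrt{H_N})g\rangle$ where $\widetilde\phi_j$ is a fattened cutoff equal to $1$ on $\mathrm{supp}\,\phi_j$, then estimate by H\"older in $x$ (exponents $p,p'$) and in $j$ (exponents $q,q'$), using $2^{sj}\cdot2^{-sj}=1$. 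Conversely, any bounded functional must be represented this way; here one invokes that $\mathcal X(\Omega)$ (resp. $\mathcal Z(\Omega)$) is dense in $B^s_{p,q}(H_N)$ for $p,q<\infty$, which follows from the embedding $\mathcal X(\Omega)\hookrightarrow B^s_{p,q}(H_N)$ of Theorem \ref{thm:1} together with a truncation-in-$j$ argument, and then identifies the dual norm with the $B^{-s}_{p',q'}$ norm via the $\ell^q$-$\ell^{q'}$ and $L^p$-$L^{p'}$ dualities applied to the sequence $\{2^{sj}\phi_j(\sqrt{H_N})f\}_j$.

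For part (ii), the lifting statements are the cleanest: since $(I+H_N)^{s_0/2}\phi_j(\sqrt{H_N})=\big[(1+\lambda^2)^{s_0/2}\phi_0(2^{-j}\lambda)\big](\sqrt{H_N})$ and the bracketed function is, after rescaling $\lambda\mapsto2^j\lambda$, a fixed compactly supported symbol times $2^{s_0 j}$ up to bounded factors, the $L^p$-boundedness of spectral multipliers (uniform in $j$, by scaling) gives $\|\phi_j(\sqrt{H_N})(I+H_N)^{s_0/2}f\|_{L^p}\lesssim 2^{s_0 j}\|\widetilde\phi_j(\sqrt{H_N})f\|_{L^p}$, and summing against $2^{(s-s_0)j}$ in $\ell^q$ yields the claim; the homogeneous case with $H_N^{s_0/2}$ is identical using the homogeneous symbol $\lambda^{s_0}\phi_0(2^{-j}\lambda)$. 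The three inhomogeneous embeddings $B^{s+\varepsilon}_{p,q}\hookrightarrow B^s_{p,q_0}$, $B^s_{p,q}\hookrightarrow B^{s_0}_{p,q}$ for $s\ge s_0$, and the $\ell^q$-monotonicity are pure sequence-space inequalities ($\ell^q\hookrightarrow\ell^{q_0}$ for $q\le q_0$, and summing a geometric factor $2^{-\varepsilon j}$), requiring no operator theory beyond what is already in \eqref{norm1}--\eqref{norm2}. The Sobolev-type embeddings $B^{s+n(1/r-1/p)}_{r,q}\hookrightarrow B^s_{p,q_0}$ (and its homogeneous analogue) are where the $L^r$-$L^p$ estimate for spectral multipliers enters essentially: one has $\|\phi_j(\sqrt{H_N})f\|_{L^p}\lesssim 2^{n(1/r-1/p)j}\|\widetilde\phi_j(\sqrt{H_N})f\|_{L^r}$ by the Bernstein-type inequality for $H_N$ proved in \S\ref{sec:3}, and then one multiplies by $2^{sj}$, matches the exponent, and uses $\ell^q\hookrightarrow\ell^{q_0}$.

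For part (iii), the embeddings relating $L^p$ and $B^0_{p,2}(H_N)$ are the Littlewood--Paley square-function estimates for the Neumann Laplacian: for $2\le p<\infty$ one writes $f=\psi(H_N)f+\sum_{j\ge1}\phi_j(\sqrt{H_N})f$ (inhomogeneous) and bounds $\|f\|_{L^p}$ by the $L^p$-norm of the square function $\big(\sum_j|\phi_j(\sqrt{H_N})f|^2\big)^{1/2}$, which in turn is dominated by $\big(\sum_j\|\phi_j(\sqrt{H_N})f\|_{L^p}^2\big)^{1/2}$ via Minkowski's integral inequality since $p/2\ge1$; the homogeneous case is the same with $j\in\mathbb Z$. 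For $1<p\le2$ one dualizes this against the $2\le p'<\infty$ case using part (i) (with $s=0$, $q=q'=2$). The square-function bound itself should follow from the $L^p$-boundedness of the relevant spectral multipliers (a Khintchine/randomization argument reduces it to uniform bounds on $\sum_j\varepsilon_j\phi_j(\sqrt{H_N})$), which is available from \S\ref{sec:3} under the stated hypotheses on $\Omega$ and $n$. I expect the main obstacle to be the two endpoint ingredients that genuinely use the geometry of $H_N$ rather than soft functional calculus: namely the uniform-in-$j$ Bernstein inequality $\|\phi_j(\sqrt{H_N})f\|_{L^p}\lesssim 2^{n(1/r-1/p)j}\|f\|_{L^r}$ and the Littlewood--Paley square-function equivalence, both of which hinge on the heat-kernel / resolvent bounds for the Neumann Laplacian on a Lipschitz domain that force the dimensional restrictions ($n\ge3$ in the unbounded case) — but these are exactly what \S\ref{sec:3} is set up to provide, so once those are invoked the rest is bookkeeping with $\ell^q$ and $L^p$ inequalities.
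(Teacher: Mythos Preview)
The paper gives no proof of this theorem beyond the sentence ``The proof of Theorem \ref{prop:property} is similar to that of Propositions 3.2 and 3.3 in \cite{IMT-Besov},'' so your outline is in fact considerably more detailed than what appears here, and for parts (i) and (ii) it matches the standard argument one would find in the cited Dirichlet-Laplacian paper: duality via the pairing with fattened cutoffs and density of $\mathcal X(\Omega)$ (resp.\ $\mathcal Z(\Omega)$), lifting via the functional-calculus identity $\phi_j(\sqrt{H_N})(I+H_N)^{s_0/2}=\big[(1+\cdot)^{s_0/2}\phi_j(\sqrt{\cdot})\big](H_N)$ together with Proposition \ref{cor:Lp}, and the Sobolev-type embeddings via the Bernstein inequality \eqref{EQ:phi-Lp}.

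There is one genuine gap in part (iii). You write that the square-function bound ``should follow from the $L^p$-boundedness of the relevant spectral multipliers \dots\ which is available from \S\ref{sec:3}.'' But \S\ref{sec:3} only proves $L^p$--$L^q$ estimates for $\phi(\theta H_N)$ with $\phi\in\mathcal S(\mathbb R)$ or $\phi\in C_0^\infty((0,\infty))$ (Lemma \ref{prop:Lp} and Proposition \ref{cor:Lp}); it does \emph{not} state a H\"ormander--Mihlin type theorem, and the multiplier $m(\lambda)=\sum_j\varepsilon_j\phi_j(\lambda)$ arising from your Khintchine/randomization reduction is neither Schwartz nor compactly supported. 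What is true is that the Gaussian upper bounds of Lemma \ref{lem:Gauss} are exactly the hypothesis under which such a spectral multiplier theorem holds (this is the content of \cite{IMT-bdd}, to which the present paper defers), so the step can be filled; but you should be explicit that you are invoking a H\"ormander-type multiplier theorem for $H_N$ from \cite{IMT-bdd}, not anything proved in \S\ref{sec:3} itself. Once that is granted, your Minkowski step for $p\ge2$ and the duality reduction for $1<p\le2$ via part (i) are correct.
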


The proof of Theorem \ref{prop:property} is similar to that 
of Propositions 3.2 and 3.3 in \cite{IMT-Besov}. \\ 

Now, the homogeneous Besov spaces $\dot{B}^s_{p,q}(H_N)$ are the subspaces of $\mathcal Z'(\Omega)$ by the definition. 
When $\Omega$ is unbounded, 
$\dot{B}^s_{p,q}(H_N)$ are also regarded as subspaces of $\mathcal X'(\Omega)$ if indices $s,p$ and $q$ are appropriately restricted. 
On the other hand, 
when $\Omega$ is bounded, $\dot{B}^s_{p,q}(H_N)$ are always regarded as subspaces of $\mathcal X'(\Omega)$. 
Summarizing the above considerations, we have the following.

\begin{thm}\label{prop:iso}
Let $1\le p,q\le \infty$. Then we have the following{\rm :}
\begin{enumerate}
\item[(i)] Let $\Omega$ be a unbounded Lipschitz domain in $\R^n$ with 
compact boundary, where $n\ge3$. 
If either $s<n/p$ or ${\rm (}s,q{\rm )}={\rm (}n/p,1{\rm )}$, then 
\begin{equation*}
\dot{B}^s_{p,q}(H_N)
\cong
\left\{
f\in\mathcal X'(\Omega):
\left\|J(f)\right\|_{\dot{B}^s_{p,q}(H_N)}
<\infty,\ 
f=\sum_{j\in\mathbb Z}\phi_j(\sqrt{H_N})f\text{ in }\mathcal X'(\Omega)
\right\},
\end{equation*}
where $J(f)$ is the restriction of $f$ on the 
subspace $\mathcal Z(\Omega)$ of 
$\mathcal X(\Omega)$. 
\item[(ii)] Let $\Omega$ be a bounded Lipschitz domain in $\R^n$ with $n\ge1$. Then the isomorphism in {\rm (i)} holds also for any $s\in \R$.
\end{enumerate}
\end{thm}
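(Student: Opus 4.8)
The plan is to follow the strategy used for the Dirichlet Laplacian in \cite{IMT-Besov}: to a distribution $f\in\dot B^s_{p,q}(H_N)\subset\mathcal Z'(\Omega)$ I associate an explicit lift $\widetilde f\in\mathcal X'(\Omega)$ obtained by summing the Littlewood--Paley pieces $\phi_j(\sqrt{H_N})f$, and then show that $f\mapsto\widetilde f$ and the restriction map $J$ are mutually inverse and norm-preserving. Concretely, for $f\in\dot B^s_{p,q}(H_N)$ I would set
\[
\langle\widetilde f,g\rangle:=\sum_{j\in\mathbb Z}{}_{\mathcal Z'(\Omega)}\langle f,\phi_j(\sqrt{H_N})g\rangle_{\mathcal Z(\Omega)},\qquad g\in\mathcal X(\Omega),
\]
which is meaningful term by term because $\phi_j(\sqrt{H_N})$ maps $\mathcal X(\Omega)$ continuously into $\mathcal Z(\Omega)$: its symbol vanishes at the origin, so $\phi_j(\sqrt{H_N})$ annihilates $\mathcal E$, and the required seminorm bounds are part of the material of \S\ref{sec:5}. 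The point is to prove absolute convergence of this series together with a continuity estimate $|\langle\widetilde f,g\rangle|\lesssim\|f\|_{\dot B^s_{p,q}(H_N)}\,p_M(g)$ for a suitable $M\in\mathbb N$, which yields $\widetilde f\in\mathcal X'(\Omega)$.

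To prove convergence I would split the sum at $j=0$. For $j\ge1$ I insert a fattened cut-off $\widetilde{\phi_j}(\sqrt{H_N})$ equal to $1$ on $\supp\phi_j$, move $\phi_j(\sqrt{H_N})$ onto $f$, and bound by H\"older $|\langle\phi_j(\sqrt{H_N})f,\widetilde{\phi_j}(\sqrt{H_N})g\rangle|\le\|\phi_j(\sqrt{H_N})f\|_{L^p}\|\widetilde{\phi_j}(\sqrt{H_N})g\|_{L^{p'}}$ (with $1/p+1/p'=1$), where the $L^1$--$L^{p'}$ estimates of \S\ref{sec:3} give $\|\widetilde{\phi_j}(\sqrt{H_N})g\|_{L^{p'}}\lesssim 2^{nj/p}\|\widetilde{\phi_j}(\sqrt{H_N})g\|_{L^1}$; the rapid decay of $2^{Mj}\|\phi_j(\sqrt{H_N})g\|_{L^1}$ encoded in $p_M(g)$ beats the factor $2^{(n/p-s)j}$, so this part converges for every $s$. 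For $j\le0$ the same H\"older/$L^1$--$L^{p'}$ argument yields
\[
\sum_{j\le0}|\langle\phi_j(\sqrt{H_N})f,g\rangle|\lesssim\|g\|_{L^1}\sum_{j\le0}2^{(n/p-s)j}\Big(2^{sj}\|\phi_j(\sqrt{H_N})f\|_{L^p(\Omega)}\Big),
\]
and this is where the hypotheses enter: when $s<n/p$ the geometric weight $2^{(n/p-s)j}$ over $j\le0$ is summable against the $\ell^q$-sequence $\{2^{sj}\|\phi_j(\sqrt{H_N})f\|_{L^p}\}_j$ (by H\"older, or crudely by $\ell^q\hookrightarrow\ell^\infty$), while when $(s,q)=(n/p,1)$ the weight equals $1$ and the right-hand side is exactly $\|g\|_{L^1}\,\|f\|_{\dot B^{n/p}_{p,1}(H_N)}$. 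This single estimate is the heart of the matter and is precisely what forces the dichotomy $s<n/p$ or $(s,q)=(n/p,1)$; the hypothesis $n\ge3$ is used only through the spectral multiplier bounds of \S\ref{sec:3}.

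It then remains to check that $f\mapsto\widetilde f$ has the asserted properties. Using the reproducing identity $\sum_j\phi_j(\sqrt{H_N})g=g$ in $\mathcal Z(\Omega)$ for $g\in\mathcal Z(\Omega)$ (\S\ref{sec:5}) one gets $\langle\widetilde f,g\rangle=\langle f,g\rangle$ for such $g$, i.e.\ $J(\widetilde f)=f$, whence $\|J(\widetilde f)\|_{\dot B^s_{p,q}(H_N)}=\|f\|_{\dot B^s_{p,q}(H_N)}$. Since $\phi_k\phi_j\equiv0$ unless $|k-j|\le1$, a short computation gives $\phi_j(\sqrt{H_N})\widetilde f=\phi_j(\sqrt{H_N})f$ in $\mathcal X'(\Omega)$, and the absolute convergence just established shows $\sum_{|j|\le N}\phi_j(\sqrt{H_N})\widetilde f\to\widetilde f$ in $\mathcal X'(\Omega)$; thus $\widetilde f$ belongs to the set on the right-hand side. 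Conversely, if $\widehat f\in\mathcal X'(\Omega)$ lies in that set then, by definition of $J$ as restriction and the same support considerations, $\phi_j(\sqrt{H_N})J(\widehat f)=\phi_j(\sqrt{H_N})\widehat f$, so $\widetilde{J(\widehat f)}=\sum_j\phi_j(\sqrt{H_N})\widehat f=\widehat f$. Hence $f\mapsto\widetilde f$ and $J$ are mutually inverse and isometric, which is the isomorphism of (i). For (ii) I would note that when $\Omega$ is bounded the spectrum of $H_N$ is discrete with a gap above the zero eigenvalue (see \eqref{EQ:eigenvalues}), so $\phi_j(\sqrt{H_N})=0$ for all sufficiently negative $j$; then the sum over $j\le0$ is finite, converges with no restriction, and the whole construction --- hence the isomorphism --- goes through for every $s\in\mathbb R$. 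I expect the low-frequency estimate of the second paragraph to be the only real obstacle.
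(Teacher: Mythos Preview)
Your proposal is correct. For part (i) your construction of the lift $\widetilde f$ via $\langle\widetilde f,g\rangle=\sum_j\langle f,\phi_j(\sqrt{H_N})g\rangle$ is exactly the map $T(f)=\sum_j\phi_j(\sqrt{H_N})\tilde f$ used in the paper (which defers to \cite{IMT-Besov}), and your high/low frequency split with the dichotomy $s<n/p$ or $(s,q)=(n/p,1)$ governing the low-frequency geometric series is the same mechanism.

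For part (ii) your route genuinely differs from the paper's. The paper treats the bounded case by invoking the sharpened spectral-multiplier bound \eqref{EQ:phi-Lp2} of Proposition~\ref{cor:Lp}, which carries the extra factor $e^{-\mu 2^{-j}}$ for $j\le0$; this factor makes $\sum_{j\le0}2^{(n/p-s)j}e^{-\mu 2^{-j}}$ converge for \emph{every} $s$, removing the restriction. You instead observe directly that the spectral gap $\lambda_2>0$ forces $\phi_j(\sqrt{H_N})=0$ once $2^{j+1}<\sqrt{\lambda_2}$, so the low-frequency sum is finite and no convergence issue arises. Your argument is shorter and more transparent; the paper's has the (mild) virtue of being structurally parallel to the unbounded case and of producing a bound that does not depend on the explicit value of $\lambda_2$. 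Either way the underlying input is the same spectral gap, just processed differently.
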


The proof of Theorem \ref{prop:iso} is done in \S\ref{sec:6}. 

\section{$L^p$-$L^q$-estimates for spectral multipliers}\label{sec:3}
This section is devoted to proving $L^p$-$L^q$-estimates for spectral multipliers for $H_N$. We denote by $\mathcal B(X,Y)$ the space of all linear bounded operators from a Banach space $X$ to another one $Y$. 
When $X=Y$, we write $\mathcal B(X)=\mathcal B(X,X)$. Introducing 
the characteristic function $\chi_{(0,\infty)}(\lambda)$ of $(0,\infty)$,
we write for brevity a projection as 
\[
P:=\chi_{(0,\infty)}(H_N).
\]
Throughout this section, \S\ref{sec:5} and \S\ref{sec:6} we always assume that 
$\Omega$ 
is a Lipschitz domain in $\R^n$ with a compact boundary, where $n\ge3$ if 
$\Omega$ is unbounded, and $n\ge1$ if $\Omega$ is bounded. 
This assumption is necessary for developing functional 
calculus.\\

The $L^p$-$L^q$-estimates for operators in the Littlewood-Paley partition of unity
play a fundamental role in our argument. 

\begin{prop}\label{cor:Lp}
Let $1 \le p \le q \le \infty$, and let $\{\psi\}\cup\{\phi_j\}_j$ be functions given by \eqref{EQ:phi1}, \eqref{EQ:phi2} and \eqref{EQ:psi}. Then for any $m\in \mathbb N\cup\{0\}$, there exists a constant $C>0$ such that
\begin{equation}\label{EQ:psi-Lp}
\|H_N^m\psi(H_N)\|_{\mathcal B(L^p(\Omega), L^q(\Omega))}
\le C,
\end{equation}
and for any $\alpha \in \mathbb R$ there exists a constant $C>0$ such that
\begin{equation}\label{EQ:phi-Lp}
\|H_N^\alpha \phi_j(\sqrt{H_N})\|_{\mathcal B(L^p(\Omega), L^q(\Omega))}
\le C 2^{n(\frac{1}{p}-\frac{1}{q})j+2\alpha j}
\end{equation}
for any $j\in\mathbb Z$. In particular, if $\Omega$ is bounded, then for any $m\in\mathbb N$ and 
$\alpha \in \mathbb R$ there exist two constants $\mu>0$ and $C>0$ such that
\begin{equation}\label{EQ:psi-Lp2}
\|H_N^m \psi(2^{-2j}H_N)\|_{\mathcal B(L^p(\Omega), L^q(\Omega))}
\le 
\begin{cases}
C 2^{n(\frac{1}{p}-\frac{1}{q})j+2m j}\quad &\text{for }j\ge1,\\
C2^{n(\frac{1}{p}-\frac{1}{q})j+2m j} e^{-\mu 2^{-j}} &\text{for }j\le 0,
\end{cases}
\end{equation}
\begin{equation}\label{EQ:phi-Lp2}
\|H_N^\alpha \phi_j(\sqrt{H_N})\|_{\mathcal B(L^p(\Omega), L^q(\Omega))}
\le 
\begin{cases}
C 2^{n(\frac{1}{p}-\frac{1}{q})j+2\alpha j}\quad &\text{for }j\ge1,\\
C 2^{n(\frac{1}{p}-\frac{1}{q})j+2\alpha j} e^{-\mu 2^{-j}} &\text{for }j\le 0.
\end{cases}
\end{equation}
\end{prop}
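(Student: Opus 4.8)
The plan is to deduce all the stated bounds from a single Gaussian heat-kernel estimate for $e^{-tH_N}$ together with the spectral calculus, following the by-now standard machinery for spectral multipliers on spaces of homogeneous type. The starting point is that on a Lipschitz domain with compact boundary (under the dimensional restrictions imposed, which guarantee that the domain carries a Gaussian upper bound for the Neumann heat kernel — see e.g.\ the literature on the Neumann Laplacian on Lipschitz domains) one has
\begin{equation}\notag
\big|e^{-tH_N}(x,y)\big|\le C t^{-n/2}\exp\!\Big(-c\frac{|x-y|^2}{t}\Big),\qquad t>0,\ x,y\in\Omega,
\end{equation}
and consequently the "on-diagonal" $L^p$--$L^q$ bound $\|e^{-tH_N}\|_{\mathcal B(L^p,L^q)}\le C t^{-\frac{n}{2}(\frac1p-\frac1q)}$ for $1\le p\le q\le\infty$. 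First I would record this bound and, via analyticity of the semigroup, the derivative estimates $\|H_N^m e^{-tH_N}\|_{\mathcal B(L^p,L^q)}\le C t^{-\frac{n}{2}(\frac1p-\frac1q)-m}$. Writing $\psi(\lambda)=(\psi(\lambda)e^{\lambda})\,e^{-\lambda}$ and noting $\psi(\lambda)e^{\lambda}$ is bounded, the estimate \eqref{EQ:psi-Lp} for $H_N^m\psi(H_N)$ follows immediately by composing $H_N^m e^{-H_N}$ with the bounded operator $\psi(H_N)e^{H_N}$ — the latter being bounded on every $L^p$ because $\lambda\mapsto\psi(\lambda)e^{\lambda}$ is a compactly supported smooth function, so one may invoke an $L^p$ spectral multiplier theorem (Hörmander-type, valid under the Gaussian bound) or simply the boundedness of the semigroup on $L^p$ together with compact support.

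For \eqref{EQ:phi-Lp}, the key reduction is scaling. Set $g(\lambda):=\lambda^{2\alpha}\phi_0(\lambda)$ (recall $\phi_j(\sqrt{H_N})$ has $\phi_0(2^{-j}\sqrt{H_N})$, supported where $\sqrt{H_N}\sim 2^j$, i.e.\ $H_N\sim 2^{2j}$), so that $H_N^\alpha\phi_j(\sqrt{H_N})=2^{2\alpha j}\,g\big(2^{-j}\sqrt{H_N}\big)$. Since $g$ is smooth and compactly supported in $(0,\infty)$, I would write $g(\lambda)=h(\lambda)\,e^{-\lambda^2}$ with $h\in C_0^\infty((0,\infty))$, whence $g(2^{-j}\sqrt{H_N})=h(2^{-j}\sqrt{H_N})\,e^{-2^{-2j}H_N}$. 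The second factor obeys $\|e^{-2^{-2j}H_N}\|_{\mathcal B(L^p,L^q)}\le C(2^{-2j})^{-\frac n2(\frac1p-\frac1q)}=C\,2^{n(\frac1p-\frac1q)j}$ directly from the heat-kernel $L^p$--$L^q$ bound, and the first factor $h(2^{-j}\sqrt{H_N})$ is bounded on $L^q$ uniformly in $j$ by the scale-invariant form of the spectral multiplier theorem (the $L^q$ operator norm of $h(2^{-j}\sqrt{H_N})$ depends only on finitely many derivatives of $h$, not on $j$, by scaling of the Gaussian estimate). Combining gives \eqref{EQ:phi-Lp}. One should double-check the $\alpha<0$ case, where $g(\lambda)=\lambda^{2\alpha}\phi_0(\lambda)$ still lies in $C_0^\infty((0,\infty))$ because $\phi_0$ is supported away from $0$, so no singularity arises.

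Finally, for the bounded-domain refinements \eqref{EQ:psi-Lp2}--\eqref{EQ:phi-Lp2} one exploits the spectral gap: by \eqref{EQ:eigenvalues}, $\lambda_2>0$ is the smallest positive eigenvalue, so $\|e^{-tH_N}P\|_{\mathcal B(L^2)}\le e^{-\lambda_2 t}$, and interpolating this with the Gaussian $L^p$--$L^q$ bound (or running the Davies–Gaffney/Nash argument on the gapped semigroup) yields $\|e^{-tH_N}P\|_{\mathcal B(L^p,L^q)}\le C t^{-\frac n2(\frac1p-\frac1q)}e^{-\mu t}$ for a $\mu\in(0,\lambda_2)$ and all $t>0$. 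Since for $j\le 0$ the multipliers $\phi_j(\sqrt{H_N})$ and $H_N^m\psi(2^{-2j}H_N)$ (for $m\ge1$) annihilate the zero eigenspace $\mathcal E$ — the former because $\phi_0$ vanishes near $0$, the latter because of the factor $H_N^m$ — one may insert $P$ for free and then repeat the argument of the previous paragraph with $e^{-2^{-2j}H_N}$ replaced by $e^{-2^{-2j}H_N}P$, picking up the extra factor $e^{-\mu 2^{-2j}}\le e^{-\mu 2^{-j}}$ when $j\le 0$ (here $2^{-2j}\ge 2^{-j}$). For $j\ge 1$ no decay is claimed and the bound is just \eqref{EQ:phi-Lp}, together with the analogous computation for $H_N^m\psi(2^{-2j}H_N)=2^{2mj}(\lambda^m\psi)(2^{-2j}H_N)$ where $\lambda^m\psi\in C_0^\infty$ gives the $2^{n(\frac1p-\frac1q)j+2mj}$ growth via the same heat-semigroup factorization.

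The main obstacle is not any single inequality above but making the "scale-invariant spectral multiplier theorem" rigorous in this setting: one must be sure that on a Lipschitz domain the Neumann heat kernel genuinely satisfies a global Gaussian upper bound (which needs the compact-boundary hypothesis and the dimensional restrictions, and for the unbounded case uses that $0$ is not an eigenvalue), and that the Hörmander-type multiplier estimate — giving uniform-in-$j$ $L^q$ boundedness of $h(2^{-j}\sqrt{H_N})$ — is available. Once this functional-calculus framework is in place, every displayed estimate reduces to elementary scaling bookkeeping plus the semigroup decay coming from the spectral gap in the bounded case.
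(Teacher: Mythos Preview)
Your strategy is correct and aligns with the paper's: both reduce Proposition~\ref{cor:Lp} to a scale-invariant $L^p$--$L^q$ bound for $\phi(\theta H_N)$ with $\phi$ Schwartz (or $\phi\in C_0^\infty((0,\infty))$ for the exponential gain), after which the four displayed estimates are pure bookkeeping---your rescaling $H_N^\alpha\phi_j(\sqrt{H_N})=2^{2\alpha j}\Phi(2^{-2j}H_N)$ with $\Phi(\lambda)=\lambda^\alpha\phi_0(\sqrt{\lambda})\in C_0^\infty((0,\infty))$ is exactly the reduction the paper has in mind. The paper packages this general bound as Lemma~\ref{prop:Lp} and derives the proposition from it in one line.

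The genuine methodological difference lies in how that multiplier bound is obtained. You invoke an off-the-shelf H\"ormander-type spectral multiplier theorem valid under Gaussian heat-kernel upper bounds, and correctly identify this as ``the main obstacle''. The paper instead proves Lemma~\ref{prop:Lp} from scratch via an amalgam-space argument (Lemmas~\ref{lem:resolvent}--\ref{lem:A-bdd}): one shows $\|\phi(\theta H_N)\|_{\mathcal B(l^1(L^2)_\theta)}\le C$ uniformly in $\theta$ using a commutator class $\mathcal A_{\alpha,\theta}$, combines this with resolvent bounds $\|(\theta H_N+M)^{-\beta}\|_{L^p\to l^p(L^q)_\theta}\le C\theta^{-\frac n2(\frac1p-\frac1q)}$ (themselves consequences of the Gaussian bound), and assembles the $L^1$--estimate first, then gets $L^p$--$L^q$ by duality, interpolation, and composition with the resolvent. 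Your route is shorter if one is willing to cite the literature; the paper's is self-contained.

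One point to tighten: for \emph{bounded} $\Omega$ the Neumann heat kernel does \emph{not} obey $|e^{-tH_N}(x,y)|\le Ct^{-n/2}e^{-c|x-y|^2/t}$ globally in $t$---the correct bound carries $\max\{t^{-n/2},1\}$ (see \eqref{EQ:Gauss1}), so your appeal to the global Gaussian bound and the scale-invariant multiplier theorem fails as written for $j\le 0$. The fix is precisely the one you already give for \eqref{EQ:psi-Lp2}--\eqref{EQ:phi-Lp2}: since $\Phi$ (or $h$) is supported away from the origin, insert $P$ for free and use instead the gapped kernel bound \eqref{EQ:Gauss3} for $Pe^{-tH_N}$, which \emph{does} decay like $t^{-n/2}e^{-\mu t}$. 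In the bounded case \eqref{EQ:phi-Lp} for $j\le0$ should therefore be read as a consequence of \eqref{EQ:phi-Lp2}, not proved independently.
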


Proposition \ref{cor:Lp} is an immediate consequence of the following.

\begin{lem}
\label{prop:Lp}
Let $\phi \in \mathcal S(\mathbb R)$. 
Then $\phi(\theta H_N)$ is extended to a bounded linear operator from $L^p(\Omega)$ to $L^q(\Omega)$ for any fixed $\theta>0$ provided that $1\le p \le q \le\infty$. Furthermore, 
we have the uniform estimates{\rm :}
\begin{itemize}
\item[(i)] If $\Omega$ is unbounded, 
then 
there exists a constant $C>0$ such that
\begin{equation}\label{EQ:Lp}
\|\phi(\theta H_N)\|_{\mathcal B(L^p(\Omega),L^q(\Omega))} \le C \theta^{-\frac{n}{2}(\frac{1}{p}-\frac{1}{q})}
\end{equation}
for any $\theta>0$.
\item[(ii)] 
If $\Omega$ is bounded, 
then the estimate \eqref{EQ:Lp} holds for any $0<\theta\le 1$. 
In particular, if $\phi\in C^\infty_0 ( (0,\infty) )$, then 
there exist two constants $\mu>0$ and $C>0$ such that
\begin{equation}\label{EQ:Lp2}
\|\phi(\theta H_N)\|_{\mathcal B(L^p(\Omega),L^q(\Omega))} \le C\theta^{-\frac{n}{2}(\frac{1}{p}-\frac{1}{q})}e^{-\mu \theta}
\end{equation}
for any $\theta>0$.
\end{itemize}
\end{lem}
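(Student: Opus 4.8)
The plan is to reduce everything to Gaussian heat kernel bounds for $e^{-t H_N}$ and then exploit the spectral calculus. First I would recall that on a Lipschitz domain $\Omega$ with compact boundary (in the stated dimension range), the Neumann heat semigroup $e^{-tH_N}$ has a kernel $p_t(x,y)$ satisfying a Gaussian upper bound $0\le p_t(x,y)\le C t^{-n/2}\exp(-c|x-y|^2/t)$ for $0<t\le 1$, and when $\mathrm{vol}(\Omega)=\infty$ this holds for all $t>0$; this is standard for Neumann Laplacians on such domains (Davies-type estimates / extension domain arguments). From this one gets the on-diagonal decay $\|e^{-tH_N}\|_{\mathcal B(L^p,L^q)}\le C t^{-\frac n2(\frac1p-\frac1q)}$ for $t$ in the appropriate range, by the usual Young/Schur argument using $\int_\Omega t^{-n/2}e^{-c|x-y|^2/t}\,dy\le C$. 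When $\Omega$ is bounded, $e^{-tH_N}$ restricted to $\mathcal E^\perp$ decays like $e^{-\lambda_2 t}$ by the spectral gap, which will produce the exponential factor $e^{-\mu\theta}$ in \eqref{EQ:Lp2}.

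Next, for a general $\phi\in\mathcal S(\mathbb R)$ I would write $\phi(\theta H_N)=\big(\phi(\theta H_N)e^{\theta H_N}\big)\,e^{-\theta H_N}$, i.e. factor out one heat semigroup: $\phi(\theta\lambda)=\widetilde\phi(\theta\lambda)e^{-\theta\lambda}$ with $\widetilde\phi(\mu)=\phi(\mu)e^{\mu}$. The point is that $e^{-\theta H_N}$ carries the $L^p\to L^q$ smoothing with the right power of $\theta$, while $\widetilde\phi(\theta H_N)$ must be shown to be bounded on $L^q$ (in fact on every $L^r$) uniformly in $\theta$. The uniform $L^r$-boundedness of $\widetilde\phi(\theta H_N)$ is where a spectral multiplier theorem is needed: since $H_N$ satisfies Gaussian bounds, a Hörmander–Mikhlin-type multiplier theorem applies, and $\widetilde\phi$ — being Schwartz times $e^{\mu}$, hence still with all the symbol-type estimates needed on $[0,\infty)$ after noting $\widetilde\phi$ need only be controlled where the spectrum lives — gives $\sup_{\theta>0}\|\widetilde\phi(\theta H_N)\|_{\mathcal B(L^r)}<\infty$. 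Alternatively, and more in the self-contained spirit of this kind of paper, one writes $\widetilde\phi(\theta H_N)$ via the subordination/Fourier representation $\widetilde\phi(\theta H_N)=\int_{\mathbb R}\widehat{g}(\tau)\,e^{i\tau\theta H_N}\,d\tau$ and controls it through finite propagation speed or through further heat-kernel factorizations; but the cleanest route is the heat-kernel-based multiplier theorem.

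Then I would assemble the pieces: in the unbounded case, $\|\phi(\theta H_N)\|_{\mathcal B(L^p,L^q)}\le \|\widetilde\phi(\theta H_N)\|_{\mathcal B(L^q)}\,\|e^{-\theta H_N}\|_{\mathcal B(L^p,L^q)}\le C\,\theta^{-\frac n2(\frac1p-\frac1q)}$, uniformly in $\theta>0$. In the bounded case the same factorization with $\theta\le 1$ gives \eqref{EQ:Lp}; for $\phi\in C^\infty_0((0,\infty))$ one has $\phi(\theta H_N)=\phi(\theta H_N)P$ (the spectral projection onto $(0,\infty)$, killing the constant eigenspace $\mathcal E$), so writing $\phi(\theta\lambda)=\widetilde\phi(\theta\lambda)e^{-\theta\lambda}$ and using $\|e^{-\theta H_N}P\|_{\mathcal B(L^p,L^q)}\le C\theta^{-\frac n2(\frac1p-\frac1q)}e^{-\lambda_2\theta}$ — valid for all $\theta>0$ because for large $\theta$ the spectral gap $\lambda_2>0$ dominates while for small $\theta$ the Gaussian bound gives the power — yields \eqref{EQ:Lp2} with $\mu=\lambda_2$ (or any $\mu<\lambda_2$). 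For the large-$\theta$ regime in the bounded case one should note that $\widetilde\phi(\theta H_N)P$ still has norm bounded: since $\mathrm{supp}\,\phi$ is compact in $(0,\infty)$, only finitely many eigenvalues $\lambda_k$ with $\theta\lambda_k$ in that support contribute, and for $\theta$ large none do, so $\phi(\theta H_N)=0$; hence the estimate is trivial there.

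The main obstacle I expect is the uniform-in-$\theta$ $L^r$-boundedness of the auxiliary multiplier $\widetilde\phi(\theta H_N)$ — equivalently, having a clean spectral multiplier theorem available for $H_N$ on a Lipschitz domain with only a compact boundary, where one must be careful that the Gaussian bound may only hold for $0<t\le 1$ in the bounded case and that the spectral behaviour at $0$ (the constant eigenfunction) is handled by the projection $P$. Getting the exponential gain $e^{-\mu\theta}$ with the correct power of $\theta$ simultaneously, by splitting the heat factor as $e^{-\theta H_N}=e^{-\theta H_N/2}e^{-\theta H_N/2}$ and putting the smoothing on one half and the spectral-gap decay on the other, is the technical heart of part (ii).
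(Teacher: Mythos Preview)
Your overall architecture (factor $\phi(\theta H_N)$ into a smoothing piece carrying the $\theta^{-\frac n2(\frac1p-\frac1q)}$ and a multiplier piece bounded uniformly on $L^r$) matches the paper's, but the specific factorization you propose has a genuine gap. Writing $\widetilde\phi(\mu)=\phi(\mu)e^{\mu}$ for a general $\phi\in\mathcal S(\mathbb R)$ does \emph{not} give a function with symbol-type bounds on $[0,\infty)$: Schwartz decay is only super-polynomial, so $\phi(\mu)e^{\mu}$ can be unbounded (take any $\phi$ whose Fourier transform has compact support --- then $\phi$ is entire of exponential type and cannot decay exponentially on $\mathbb R$). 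Thus neither the $L^\infty$-calculus nor a H\"ormander--Mikhlin theorem applies to $\widetilde\phi(\theta H_N)$, and the step ``$\sup_{\theta>0}\|\widetilde\phi(\theta H_N)\|_{\mathcal B(L^r)}<\infty$'' fails as written.

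The paper avoids exactly this by factoring through the \emph{resolvent} rather than the heat semigroup: one writes $\phi(\theta H_N)=\widetilde\phi(\theta H_N)(\theta H_N+M)^{-\beta}$ with $\widetilde\phi(\lambda)=(\lambda+M)^{\beta}\phi(\lambda)$, which \emph{is} again Schwartz. The $L^p\to L^q$ smoothing then comes from the resolvent power (Lemma~\ref{lem:resolvent}, proved from the Gaussian bound via the Laplace formula), and the uniform $L^r$-boundedness of $\widetilde\phi(\theta H_N)$ is obtained not from a black-box multiplier theorem but self-containedly via amalgam spaces $l^1(L^2)_\theta$ (Lemmas~\ref{lem:am-bdd}, \ref{lem:A-bdd}, \ref{lem:suffi}): one shows $\|\phi(\theta H_N)f\|_{L^1}\le \theta^{n/4}\|\phi(\theta H_N)f\|_{l^1(L^2)_\theta}$ and then controls the amalgam norm. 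This route sidesteps the circularity you flag (``a spectral multiplier theorem is needed'' when that is essentially what is being proved). For part~(ii) with $\theta>1$ and $\phi\in C^\infty_0((0,\infty))$, your observation that $\phi(\theta H_N)=0$ once $\theta>\sup(\mathrm{supp}\,\phi)/\lambda_2$ is correct and actually simpler than the paper's argument there; the paper instead keeps a uniform estimate for all $\theta>1$ by inserting $Pe^{-\theta H_N}$ and using the spectral gap, which has the advantage of giving the explicit $e^{-\mu\theta}$ directly without splitting into ranges.
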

The proof of Lemma \ref{prop:Lp} is postponed in \S\S\ref{sec:3.2}.\\

We divide the section into two subsections. 
In \S\S\ref{sec:3.1} we prove the estimates for the resolvent of $H_N$ and the operator $\phi(H_N)$ in amalgam spaces. 
In \S\S\ref{sec:3.2} we prove Lemma \ref{prop:Lp}.

\subsection{Estimates in amalgam spaces}\label{sec:3.1}
Following \cite{IMT-bdd} (see also 
Jensen and Nakamura \cite{JN-1995} and the references therein),
let us define the amalgam spaces as follows:\\

\noindent{\bf Definition.} {\em
Let $1 \le p,q\le \infty$ and $\theta>0$. The amalgam space $l^p(L^q)_\theta$ is defined by
\[
l^p(L^q)_\theta=l^p(L^q)_\theta(\Omega)
:=
\big\{f\in L^q_{\mathrm{loc}}(\overline{\Omega})
:
\|f\|_{l^p(L^q)_\theta}<\infty \big\}
\]
with the norm
\[
\|f\|_{l^p(L^q)_\theta}
:=
\begin{cases}
\displaystyle
\left(\sum_{m\in\mathbb Z^n}
\|f\|_{L^q(C_\theta(m))}^p
\right)^{\frac{1}{p}}\quad &\text{for $1\le p< \infty$},\\
\displaystyle 
\sup_{m\in\mathbb Z^n}
\|f\|_{L^q(C_\theta(m))}
&\text{for $p= \infty$},
\end{cases}
\]
where $C_\theta(m)$ is the intersection of $\Omega$ and the cube centered at $\theta^{1/2}m$ {\rm (}$m\in\mathbb Z^n${\rm )} with side length $\theta^{1/2}$, i.e., 
\[
C_\theta(m)=
\left\{
x=(x_1,\cdots,x_n)\in\Omega:\max_{j=1,\cdots,n}|x_j-\theta^{\frac{1}{2}}m_j|\le \frac{\theta^{\frac{1}{2}}}{2}
\right\}.
\]
}

\vspace{2mm}

It is readily seen from the definition that 
\[
l^p(L^q)_\theta \subset L^p(\Omega) \cap L^q(\Omega)
\] 
for any $\theta > 0$ and $1 \le p, q \le \infty$.\\

We shall prove the following lemmas.

\begin{lem}\label{lem:resolvent}
Let $1 \le p\le q \le \infty$, $M>0$ and $\beta$ be such that
\begin{equation*}\label{EQ:beta}
\beta >\frac{n}{2}\left(\frac{1}{p}-\frac{1}{q} \right).
\end{equation*}
Then 
$(\theta H_N+M)^{-\beta}$ is extended to a bounded linear operator from 
$L^p(\Omega)$ to $l^p(L^q)_\theta$ 
for any fixed $\theta>0.$
Furthermore, we have the uniform estimates for the resolvent with respect to 
$\theta>0$ as follows{\rm :}
\begin{itemize}
\item[(i)] 
If $\Omega$ is unbounded, 
then there exists a constant $C>0$ such that
\begin{equation}\label{EQ:resolvent_LpLq}
\|(\theta H_N+M)^{-\beta}\|_{\mathcal B (L^p(\Omega), L^q(\Omega))}
\le C \theta^{-\frac{n}{2}(\frac{1}{p}-\frac{1}{q})},
\end{equation}
\begin{equation}\label{EQ:resolvent_am}
\|(\theta H_N+M)^{-\beta}\|_{\mathcal B (L^p(\Omega), l^p(L^q)_\theta)}
\le C \theta^{-\frac{n}{2}(\frac{1}{p}-\frac{1}{q})}
\end{equation}
for any $\theta >0$. 
\item[(ii)] 
If $\Omega$ is bounded, then the estimates \eqref{EQ:resolvent_LpLq} and \eqref{EQ:resolvent_am} hold for any $0<\theta\le 1$. 
\end{itemize} 
\end{lem}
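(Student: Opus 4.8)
The plan is to establish Lemma~\ref{lem:resolvent} by reducing the resolvent bound to the heat semigroup bound via the subordination formula, so that the main analytic input becomes Gaussian (or Gaussian-type) upper bounds for the Neumann heat kernel on a Lipschitz domain with compact boundary. First I would recall that for a Lipschitz domain $\Omega$ with compact boundary satisfying the dimensional restriction stated in the standing assumptions, the Neumann heat semigroup $e^{-tH_N}$ has an integral kernel $p_t(x,y)$ obeying an on-diagonal bound $\|e^{-tH_N}\|_{\mathcal B(L^1(\Omega),L^\infty(\Omega))}\le C t^{-n/2}$ for $0<t\le 1$ (and for all $t>0$ when $\Omega$ is unbounded), together with a Gaussian off-diagonal decay $|p_t(x,y)|\le C t^{-n/2}\exp(-c|x-y|^2/t)$ in the same range of $t$; in the bounded case one additionally has $\|e^{-tH_N}P\|_{\mathcal B(L^1,L^\infty)}\le Ce^{-\mu t}t^{-n/2}$ for the part on $\mathcal E^\perp$ because of the spectral gap $\lambda_2>0$. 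These are the facts I would cite (from the references on heat kernels on Lipschitz domains, e.g.\ the ones in the bibliography) rather than reprove.

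The next step is the deduction of the $L^p\to L^q$ bound \eqref{EQ:resolvent_LpLq}. Write, for $\beta>0$,
\[
(\theta H_N+M)^{-\beta}
=\frac{1}{\Gamma(\beta)}\int_0^\infty t^{\beta-1}e^{-Mt}\,e^{-t\theta H_N}\,dt,
\]
and estimate the $\mathcal B(L^p,L^q)$ norm of the integrand by interpolating the on-diagonal $L^1\to L^\infty$ bound with the trivial $L^2\to L^2$ bound (or directly with $L^p\to L^p$), giving $\|e^{-t\theta H_N}\|_{\mathcal B(L^p,L^q)}\le C(t\theta)^{-\frac n2(\frac1p-\frac1q)}$ for $t\theta\le 1$ and an exponentially small bound for $t\theta\ge1$ in the unbounded case. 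Substituting and changing variables $s=t\theta$ turns the integral into $\theta^{-\frac n2(\frac1p-\frac1q)}\int_0^\infty s^{\beta-1-\frac n2(\frac1p-\frac1q)}e^{-Ms/\theta}(\cdots)\,ds$; the condition $\beta>\frac n2(\frac1p-\frac1q)$ is exactly what makes the integral converge at $s=0$, and the exponential factor (or, in the bounded case $0<\theta\le 1$, splitting the integral at $s=1$ and using that the small-$s$ part is finite while the large-$s$ part is controlled crudely) handles the tail. This yields \eqref{EQ:resolvent_LpLq}.

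For the amalgam estimate \eqref{EQ:resolvent_am}, I would exploit the Gaussian off-diagonal decay to gain summability over the lattice of cubes $C_\theta(m)$. Fix $m$, and for a function $f\in L^p(\Omega)$ decompose $f=\sum_{k\in\mathbb Z^n} f\mathbf 1_{C_\theta(k)}$; then $\|(\theta H_N+M)^{-\beta}(f\mathbf 1_{C_\theta(k)})\|_{L^q(C_\theta(m))}$ is controlled, again through the subordination formula and the Gaussian bound on $p_{t\theta}(x,y)$, by $C\theta^{-\frac n2(\frac1p-\frac1q)}\,\omega(|m-k|)\,\|f\|_{L^p(C_\theta(k))}$ where $\omega$ is a rapidly (at least summably, with enough decay after accounting for the polynomial loss from the $t$-integral) decaying weight in $|m-k|$ obtained by integrating $e^{-c\theta|m-k|^2/(t\theta)}=e^{-c|m-k|^2/t}$ against $t^{\beta-1}$. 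Summing in $k$ via Young's inequality for convolution on $\ell^p(\mathbb Z^n)$ and then taking the $\ell^p$ norm in $m$ gives \eqref{EQ:resolvent_am} with the $\theta$-uniform constant, in the stated ranges of $\theta$. The main obstacle is organizing the $t$-integral so that the Gaussian spatial decay survives with a power margin large enough to be $\ell^1$-summable over $\mathbb Z^n$ uniformly in $\theta$; concretely one must check that, after extracting the factor $(t\theta)^{-\frac n2(\frac1p-\frac1q)}$, the remaining $\int_0^\infty t^{\beta-1}e^{-Mt}e^{-c|m-k|^2/t}\,dt$ behaves like $C(1+|m-k|)^{-N}$ for a large $N$ — this is a standard but slightly delicate computation (it is essentially the Bessel-kernel asymptotics), and the interplay between the scaling in $\theta$, the cube side length $\theta^{1/2}$, and the Gaussian width $\sqrt{t\theta}$ is where care is needed; everything else is routine interpolation and summation.
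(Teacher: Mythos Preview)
Your approach is correct and is essentially the same as the paper's: both reduce the resolvent bounds to the Gaussian heat-kernel estimates (Lemma~\ref{lem:Gauss}) via the subordination formula $(\theta H_N+M)^{-\beta}=\Gamma(\beta)^{-1}\int_0^\infty t^{\beta-1}e^{-Mt}e^{-t\theta H_N}\,dt$, with the amalgam estimate following from the off-diagonal Gaussian decay exactly as you outline. The paper gives only this skeleton and refers the details to \S4 of \cite{IMT-bdd}, so your write-up is in fact more explicit; the only cosmetic point is that the change of variables $s=t\theta$ you mention is unnecessary---the factor $\theta^{-\frac n2(\frac1p-\frac1q)}$ falls out directly from $(t\theta)^{-\frac n2(\frac1p-\frac1q)}$ without any substitution, leaving a $\theta$-independent Gamma integral.
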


\begin{lem}\label{lem:am-bdd}
Let $\phi\in\mathcal S(\mathbb R)$. Then there exists a constant $C>0$ such that 
\begin{equation}\label{EQ:am-bdd}
\|\phi(\theta H_N)\|_{\mathcal B(l^1(L^2)_\theta)}
\le C
\end{equation}
for any $\theta >0$. 
\end{lem}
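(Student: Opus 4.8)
The plan is to reduce the boundedness of $\phi(\theta H_N)$ on $l^1(L^2)_\theta$ to a kernel estimate with Gaussian-type off-diagonal decay, uniform in $\theta$, after rescaling to $\theta = 1$. First I would exploit the scaling structure: writing $\phi(\theta H_N)$ and changing variables $x \mapsto \theta^{1/2}x$ should reduce matters to $\phi(H_N)$ acting on $l^1(L^2)_1$ for the rescaled domain $\theta^{-1/2}\Omega$, but since the constant in \eqref{EQ:am-bdd} is claimed uniform in $\theta$ (and $\Omega$ is only Lipschitz, so rescaling changes the domain), it is cleaner to work directly with $\theta H_N$ and track $\theta$-dependence. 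The key analytic input is the Davies--Gaffney / finite-speed-of-propagation estimate for the semigroup $e^{-t H_N}$ on a Lipschitz domain (which holds because $H_N$ is associated to the Dirichlet form $\int_\Omega |\nabla f|^2$): one has
\[
\big| \big(e^{-tH_N}f, g\big) \big| \le \exp\!\left(-\frac{\dist(A,B)^2}{4t}\right)\|f\|_{L^2}\|g\|_{L^2}
\]
for $f,g$ supported in $A,B\subset\Omega$ respectively.

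Second, I would use the Fourier/Laplace representation $\phi(\theta H_N) = \frac{1}{2\pi}\int_{\mathbb R} \widehat{\phi}(\tau)\, e^{i\tau\theta H_N}\,d\tau$, or alternatively write $\phi$ via the subordination/heat-semigroup formula so that $\phi(\theta H_N)$ becomes a superposition of $e^{-s\theta H_N}$ against a rapidly decaying weight; because $\phi\in\mathcal S(\mathbb R)$, the relevant coefficient function decays faster than any polynomial. Combining this with the Davies--Gaffney bound yields, for the ``matrix coefficients'' $T_{m,m'} := \|\chi_{C_\theta(m)}\,\phi(\theta H_N)\,\chi_{C_\theta(m')}\|_{\mathcal B(L^2)}$, an estimate of the form $T_{m,m'} \le C_N (1+|m-m'|)^{-N}$ for every $N$, uniformly in $\theta$ and in $m,m'\in\mathbb Z^n$ — here the cube side length $\theta^{1/2}$ matches the natural length scale $(\theta)^{1/2}$ of $e^{-\theta s H_N}$, which is exactly why the bound is $\theta$-independent. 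Then by Schur's test on the index set $\mathbb Z^n$ (summing the rapidly decaying weight over $m'$, which converges since $N>n$), one gets $\|\phi(\theta H_N)f\|_{l^1(L^2)_\theta} \le \sum_m \sum_{m'} T_{m,m'}\|f\|_{L^2(C_\theta(m'))} \le C \sum_{m'}\|f\|_{L^2(C_\theta(m'))} = C\|f\|_{l^1(L^2)_\theta}$, which is the claim.

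The main obstacle I anticipate is establishing the uniform-in-$\theta$ Gaussian-type off-diagonal kernel bound on a merely Lipschitz domain, where one does not have smooth heat kernel estimates or Sobolev embeddings of the usual strength; this is precisely where the Davies--Gaffney estimate (which requires only the Dirichlet form structure, not regularity of $\partial\Omega$) does the work, and where one must be careful that the Neumann boundary condition does not spoil finite speed of propagation — it does not, since the form domain is $H^1(\Omega)$ with no boundary constraint and the cutoff functions $\chi_{C_\theta(m)}$ can be taken Lipschitz. A secondary technical point is handling the low-frequency part when $\mathrm{vol}(\Omega)<\infty$ (the eigenvalue $0$), but since we only claim $L^2\to L^2$-type bounds here and $\phi$ is bounded, the projection onto $\mathcal E$ contributes a harmless bounded term; the rapid decay in the Schur weight is unaffected. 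I would organize the write-up as: (1) reduce to the matrix-coefficient/Schur-test framework; (2) state and invoke Davies--Gaffney for $e^{-tH_N}$; (3) represent $\phi(\theta H_N)$ via the semigroup and derive the off-diagonal bound; (4) apply Schur's test.
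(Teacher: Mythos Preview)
Your strategy---off-diagonal $L^2$--$L^2$ decay for $\phi(\theta H_N)$ between the cubes $C_\theta(m)$, $C_\theta(m')$, followed by a Schur test on the lattice $\mathbb Z^n$---is correct and yields the lemma. The paper takes a closely related but differently packaged route: it invokes the Jensen--Nakamura commutator machinery (Lemma~\ref{lem:A-bdd}) to show $\phi(\theta H_N)\in\mathcal A_{\alpha,\theta}$ with $\vertiii{\phi(\theta H_N)}_{\alpha,\theta}\le C\theta^{\alpha/2}$, i.e.\ the weighted bound
\(
\sup_m\big\|\,|x-\theta^{1/2}m|^{\alpha}\phi(\theta H_N)\chi_{C_\theta(m)}\big\|_{\mathcal B(L^2)}\le C\theta^{\alpha/2},
\)
and then feeds this into an abstract Schur-type inequality (Lemma~\ref{lem:suffi}). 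Both arguments are essentially ``off-diagonal decay at scale $\theta^{1/2}$ $+$ Schur''; the paper's version obtains the decay via iterated commutators $[|x-\theta^{1/2}m|^2,\phi(\theta H_N)]$ rather than Davies--Gaffney, which has the advantage of being completely elementary (no heat-kernel input) but requires setting up the $\mathcal A_{\alpha,\theta}$ framework. Your approach is the one more commonly seen in the heat-kernel literature and is arguably more transparent.

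One warning on implementation: the representation $\phi(\theta H_N)=\frac{1}{2\pi}\int\widehat\phi(\tau)e^{i\tau\theta H_N}\,d\tau$ via the \emph{Schr\"odinger} group does not give off-diagonal decay (the propagator $e^{i\tau\theta H_N}$ has no finite-speed or Gaussian localisation). The clean route is to use that Davies--Gaffney for $e^{-tH_N}$ is equivalent (Coulhon--Sikora) to finite speed of propagation for $\cos(t\sqrt{H_N})$, and then write, with $\Psi(s)=\phi(s^2)\in\mathcal S(\mathbb R)$ even,
\[
\phi(\theta H_N)=\frac{1}{\pi}\int_0^\infty \widehat\Psi(\tau)\cos\big(\tau\sqrt{\theta}\,\sqrt{H_N}\big)\,d\tau.
\]
Since $\cos(s\sqrt{H_N})$ has kernel supported in $\{d(x,y)\le s\}$, the integrand vanishes on $C_\theta(m)\times C_\theta(m')$ unless $\tau\gtrsim|m-m'|$; the rapid decay of $\widehat\Psi$ then gives $T_{m,m'}\le C_N(1+|m-m'|)^{-N}$ exactly as you claim. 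Direct ``subordination to the heat semigroup'' is more delicate for a general $\phi\in\mathcal S$ (there is no reason $\phi$ should be a Laplace transform with good weight), so I would drop that alternative and go through the cosine formula.
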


To prove Lemma \ref{lem:resolvent}, 
we need the Gaussian upper bounds for semigroup 
$\{e^{-t H_N}\}_{t>0}$ generated by $H_N$.

\begin{lem}\label{lem:Gauss}
Let $e^{-t H_N}(x,y)$ be the kernel of the semigroup $e^{-t H_N}$. 
Then the following assertions hold{\rm :}
\begin{itemize}
\item[(i)] 
If $\Omega$ 
is unbounded, 
then there exist two constants $C_1>0$ and $C_2>0$ such that
\begin{equation}
\label{EQ:Gauss2}
0 \le e^{-t H_N}(x,y)
\le C_1t^{-\frac{n}{2}} \mathrm{exp}\bigg(- \frac{|x-y|^2}{C_2t}\bigg)
\end{equation}
for any $t>0$ and $x,y\in\Omega$. 
\item[(ii)] If $\Omega$ 
is bounded, then 
there exist two constants $C_3>0$ and $C_4>0$ such that
\begin{equation}
\label{EQ:Gauss1}
0 \le e^{-t H_N}(x,y)
\le C_3\max{\{t^{-\frac{n}{2}},1\}}\,\mathrm{exp}\bigg(-\frac{|x-y|^2}{C_4t}\bigg)
\end{equation}
for any $t>0$ and $x,y\in\Omega$. 
Furthermore, let $(Pe^{-t H_N})(x,y)$ be the kernel of $Pe^{-t H_N}$. Then there exist three constants $\mu>0$, $C_5>0$ and $C_6>0$ such that
\begin{equation}
\label{EQ:Gauss3}
\big|(Pe^{-t H_N})(x,y) \big|
\le C_5 t^{-\frac{n}{2}}\mathrm{exp}\bigg(-\mu t-\frac{|x-y|^2}{C_6t}\bigg)
\end{equation}
for any $t>0$ and $x,y\in\Omega$.  
\end{itemize}
\end{lem}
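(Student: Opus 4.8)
The plan is to obtain the pointwise Gaussian bounds from two classical ingredients — an on-diagonal (ultracontractivity) bound and an $L^2$ off-diagonal (Davies--Gaffney) bound — combined via Davies' exponential-perturbation method. To begin, recall the soft facts: $\mathcal E(u,v)=\int_\Omega\nabla u\cdot\overline{\nabla v}$ with form domain $H^1(\Omega)$ is a symmetric Dirichlet form, so by the Beurling--Deny criteria $e^{-tH_N}$ is positivity preserving and a contraction on each $L^p(\Omega)$, $1\le p\le\infty$ (indeed $e^{-tH_N}1=1$); once ultracontractivity is established, $e^{-tH_N}$ has a nonnegative symmetric measurable kernel, so all of the manipulations below are legitimate.

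The crucial step is the ultracontractive bound $\|e^{-tH_N}\|_{\mathcal B(L^1(\Omega),L^\infty(\Omega))}\le C\,\omega(t)$ with $\omega(t)=t^{-n/2}$ when $\Omega$ is unbounded and $\omega(t)=\max\{t^{-n/2},1\}$ when $\Omega$ is bounded; this is where the hypotheses enter. Since $\Omega$ is a Lipschitz domain there is a bounded Sobolev extension operator $H^1(\Omega)\to H^1(\R^n)$. When $\Omega$ is unbounded with compact boundary and $n\ge3$, this yields the homogeneous Sobolev inequality $\|u\|_{L^{2n/(n-2)}(\Omega)}\le C\|\nabla u\|_{L^2(\Omega)}$ for $u\in H^1(\Omega)$, and the Nash--Moser iteration then gives $\|e^{-tH_N}\|_{\mathcal B(L^1,L^\infty)}\le Ct^{-n/2}$ for all $t>0$; the restriction $n\ge3$ is essential, the homogeneous Sobolev inequality failing on infinite-volume domains when $n\le2$, so that the rate cannot be $t^{-n/2}$ for large $t$. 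When $\Omega$ is bounded one has only $\|u\|_{L^{2n/(n-2)}}\le C(\|\nabla u\|_{L^2}+\|u\|_{L^2})$ (the lower-order term cannot be removed, as constants lie in $H^1(\Omega)$ with vanishing gradient) together with the Nash inequality for bounded Lipschitz domains; moreover $\|e^{-tH_N}\|_{\mathcal B(L^1,L^\infty)}=\sup_x e^{-tH_N}(x,x)\ge|\Omega|^{-1}$, since $e^{-tH_N}1=1$ forces $e^{-tH_N}(x,x)\ge|\Omega|^{-1}$ by Cauchy--Schwarz. These two facts produce exactly the prefactor $\max\{t^{-n/2},1\}$.

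The off-diagonal bounds need no regularity of $\Omega$: for $\supp f\subset U$, $\supp g\subset V$ with $d=\dist(U,V)$, the weighted-energy estimate $\tfrac{d}{dt}\|e^{-\rho}u(t)\|_{L^2}^2\le2\|\nabla\rho\|_{L^\infty}^2\|e^{-\rho}u(t)\|_{L^2}^2$ for $u(t)=e^{-tH_N}(e^{\rho}f)$ and any Lipschitz $\rho$, followed by Gronwall and optimization in $\rho$, gives $\|e^{-tH_N}f\|_{L^2(V)}\le e^{-d^2/(4t)}\|f\|_{L^2(U)}$. One now runs Davies' perturbation argument on the twisted semigroup $e^{\rho}e^{-tH_N}e^{-\rho}$: its generating form still obeys the Sobolev (resp. Nash) inequality up to an error $\lesssim\|\nabla\rho\|_{L^\infty}^2$, so $\|e^{\rho}e^{-tH_N}e^{-\rho}\|_{\mathcal B(L^1,L^\infty)}\le C\,\omega(t)\,e^{c\|\nabla\rho\|_{L^\infty}^2 t}$; writing $e^{-tH_N}(x,y)=e^{\rho(y)-\rho(x)}\big(e^{\rho}e^{-tH_N}e^{-\rho}\big)(x,y)$ and optimizing over $\rho$ with $\rho(z)\sim\beta|z-y|$ yields \eqref{EQ:Gauss2} when $\Omega$ is unbounded and \eqref{EQ:Gauss1} when $\Omega$ is bounded.

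It remains to treat the projected kernel in \eqref{EQ:Gauss3} for bounded $\Omega$. Here $P=\chi_{(0,\infty)}(H_N)=I-E_{\mathcal E}$ with $E_{\mathcal E}$ of constant kernel $|\Omega|^{-1}$, so $Pe^{-tH_N}=e^{-tH_N}-E_{\mathcal E}$. For $t$ in a bounded range one inserts \eqref{EQ:Gauss1} and the triangle inequality against $|\Omega|^{-1}$, using $|x-y|\le\diam\Omega$, to get a bound of the stated shape with the factor $e^{-\mu t}$ harmless. For large $t$ the spectral gap $0=\lambda_1<\lambda_2$ of \eqref{EQ:eigenvalues} gives $\|Pe^{-tH_N}\|_{\mathcal B(L^2)}\le e^{-\lambda_2 t}$; factoring $Pe^{-tH_N}=e^{-H_N}\,Pe^{-(t-2)H_N}\,e^{-H_N}$ and using $\|e^{-H_N}\|_{\mathcal B(L^1,L^2)}=\|e^{-H_N}\|_{\mathcal B(L^2,L^\infty)}\le C$ from the ultracontractive bound yields $\|Pe^{-tH_N}\|_{\mathcal B(L^1,L^\infty)}\le C e^{-\lambda_2 t}$, which is absorbed into $C_5 t^{-n/2}e^{-\mu t-|x-y|^2/(C_6 t)}$ for any $0<\mu<\lambda_2$ since $\Omega$ is bounded. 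The main obstacle throughout is the ultracontractive bound of the second step: the off-diagonal estimates and the passage from them to Gaussian bounds are robust and geometry-insensitive, whereas the correct short- and long-time on-diagonal rates — and with them the dimension restriction $n\ge3$ in the unbounded case — rely on having the right Sobolev and Nash inequalities on the Lipschitz domain $\Omega$ itself.
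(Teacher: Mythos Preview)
Your argument is correct, but it differs from the paper's treatment in one essential respect. For parts (i) and the first half of (ii) the paper does not prove anything: it simply cites Chen--Williams--Zhao for the unbounded case and Choulli--Kayser--Ouhabaz for the bounded case, and then devotes its effort entirely to \eqref{EQ:Gauss3}. You instead sketch a self-contained derivation of \eqref{EQ:Gauss2} and \eqref{EQ:Gauss1} via Davies' exponential perturbation method, feeding in the Sobolev/Nash inequality on the Lipschitz domain (via the extension operator) for the on-diagonal bound and the form-level Davies--Gaffney estimate for the off-diagonal part. This buys you an explanation of \emph{why} the dimension restriction $n\ge 3$ appears in the unbounded case (failure of the homogeneous Sobolev inequality) and why the prefactor degenerates to $\max\{t^{-n/2},1\}$ in the bounded case, neither of which is visible in the paper's citation. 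One small caveat: the step ``Lipschitz extension $\Rightarrow$ homogeneous Sobolev inequality on $\Omega$'' in the unbounded case is not entirely automatic---the extension operator a priori controls only the full $H^1$ norm, and upgrading to $\|u\|_{2n/(n-2)}\le C\|\nabla u\|_2$ uses that outside a large ball $\Omega$ coincides with $\R^n$; this is exactly what the cited reference establishes, so your sketch is morally complete but hides the same work.

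For \eqref{EQ:Gauss3} your approach and the paper's coincide: both split into small and large $t$, use the spectral gap $\|Pe^{-tH_N}\|_{\mathcal B(L^2)}\le e^{-\lambda_2 t}$, bootstrap $L^1\to L^2\to L^\infty$ through ultracontractivity, and finally insert the spatial Gaussian factor ``for free'' using $|x-y|\le\diam(\Omega)$. The only cosmetic difference is the factorisation (you write $e^{-H_N}Pe^{-(t-2)H_N}e^{-H_N}$, the paper writes $Pe^{-t/2\,H_N}Pe^{-t/2\,H_N}$); both yield the same $L^1\to L^\infty$ bound $Ce^{-\mu t}$ for large $t$.
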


\begin{proof}
The estimate \eqref{EQ:Gauss2} is proved by
Chen, Williams and Zhao (see \cite{CWZ-1994}),
and the estimate \eqref{EQ:Gauss1} is proved by 
Choulli, Kayser and Ouhabaz (see 
\cite{CKO-2015}). 
Hence it suffices to prove the estimate \eqref{EQ:Gauss3}. \\

Since the spectrum of $H_N$ satisfies \eqref{EQ:eigenvalues}, it follows that
\begin{equation}\label{EQ:heat-L2}
\begin{split}
\|Pe^{-t H_N}f\|_{L^2(\Omega)}^2
& =
\int_{\lambda_2}^\infty e^{-2t \lambda}\,d\|E_{H_N}(\lambda)f\|_{L^2(\Omega)}^2\\
& \le
e^{-2\lambda_2 t}\|f\|_{L^2(\Omega)}^2
\end{split}
\end{equation}
for any $t>0$ and $f\in L^2(\Omega)$. 
Next, we claim that
\begin{equation}\label{EQ:bdd_e}
\|e^{-t H_N}f\|_{L^\infty(\Omega)}
\le 
\begin{cases}
Ct^{-\frac{n}{4}}\|f\|_{L^2(\Omega)}\quad &\text{for }0<t\le1,\\
Ct^{\frac{n}{4}}\|f\|_{L^2(\Omega)}&\text{for }t\ge 1
\end{cases}
\end{equation}
for any $f\in L^2(\Omega)$. 
In fact, putting 
\[
K_t(x):=\mathrm{exp}\bigg(-\frac{|x|^2}{C_2t}\bigg),
\]
we have
\begin{equation}\label{EQ:Kern}
\|K_t\|_{L^2(\mathbb R^n)}= \left(\frac{C_2\pi}{2}\right)^{\frac{n}{4}} t^{\frac{n}{4}}.
\end{equation}
Letting $\tilde{f}$ be the zero extension of $f$ from $\Omega$ to
$\R^n$, 
we estimate, by using \eqref{EQ:Gauss1}, Young's inequality and quantity 
\eqref{EQ:Kern},
\begin{equation*}
\begin{split}
\|e^{-t H_N}f\|_{L^\infty(\Omega)}
& \le 
C_3\max{\{t^{-\frac{n}{2}},1\}}
\|K_t * |\tilde{f}|\|_{L^\infty(\mathbb R^n)}\\
& \le 
C_3\max{\{t^{-\frac{n}{2}},1\}}
\|K_t\|_{L^2(\mathbb R^n)}\|\tilde{f}\|_{L^2(\mathbb R^n)}\\
& = 
C_3\left(\frac{C_2\pi}{2}\right)^{\frac{n}{4}} \max{\{t^{-\frac{n}{2}},1\}}
t^{\frac{n}{4}}
\|f\|_{L^2(\Omega)},
\end{split}
\end{equation*}
which proves \eqref{EQ:bdd_e}.
Hence, when $t>1$, combining \eqref{EQ:heat-L2} and \eqref{EQ:bdd_e}, we find that
\begin{equation*}
\begin{split}
\|Pe^{-t H_N}f\|_{L^\infty(\Omega)}
& =
\|e^{-\frac{t}{2} H_N}Pe^{-\frac{t}{2} H_N}f\|_{L^\infty(\Omega)}\\
& \le 
Ct^{\frac{n}{4}}\|Pe^{-\frac{t}{2} H_N}f\|_{L^2(\Omega)}\\
& \le 
Ct^{\frac{n}{4}}e^{-\frac{\lambda_2}{2} t}\|f\|_{L^2(\Omega)}
\end{split}
\end{equation*}
for any $f\in L^2(\Omega)$, which implies that by duality argument,
\[
\|Pe^{-t H_N} f\|_{L^2(\Omega)}
\le Ct^{\frac{n}{4}}e^{-\frac{\lambda_2}{2} t}\|f\|_{L^1(\Omega)}
\]
for any $t>1$ and $f\in L^1(\Omega)$. 
Hence, combining the estimates obtained now, we get
\begin{equation}\label{EQ:aaa}
\begin{split}
\|Pe^{-t H_N} f\|_{L^\infty(\Omega)}
& =
\|Pe^{-\frac{t}{2} H_N}Pe^{-\frac{t}{2} H_N} f\|_{L^\infty(\Omega)}\\
& \le 
Ct^{\frac{n}{4}}e^{-\frac{\lambda_2}{4} t}
\|Pe^{-\frac{t}{2} H_N} f\|_{L^2(\Omega)}\\
& \le C\left(t^{\frac{n}{4}}e^{-\frac{\lambda_2}{4} t} \right)^2\|f\|_{L^1(\Omega)}
\end{split}
\end{equation}
for any $t>1$ and $f\in L^1(\Omega)$. 
Here we note from the standard argument that 
\begin{equation*}
\begin{split}
\sup_{x\in\Omega}\|Pe^{-t H_N}(x,\cdot)\|_{L^\infty(\Omega)}
& =
\|Pe^{-t H_N}\|_{\mathcal B(L^1(\Omega),L^\infty(\Omega))}
\end{split}
\end{equation*}
(see appendix B in \cite{IMT-bilinear}). Then,  
putting $L=\mathrm{diam}(\Omega)$,
we deduce from \eqref{EQ:aaa} that
\begin{equation*}
\begin{split}
\big|Pe^{-t H_N}(x,y) \big|
& \le Ct^{\frac{n}{2}}e^{-\frac{\lambda_2}{2} t}\\
& \le Ct^{\frac{n}{2}}e^{-\frac{\lambda_2}{2} t}\,\mathrm{exp}\bigg(\frac{L^2}{C_2t}\bigg)\,\mathrm{exp}\bigg(-\frac{|x-y|^2}{C_2t}\bigg)
\end{split}
\end{equation*}
for any $t>1$ and $x,y\in\Omega$, where  
\[
C\mathrm{exp}\bigg(\frac{L^2}{C_2t}\bigg)
\] 
is bounded in $t>1$.
Thus 
we conclude the estimate \eqref{EQ:Gauss3}. 
The proof of Lemma \ref{lem:Gauss} is finished.
\end{proof}

We are now in a position to prove Lemma \ref{lem:resolvent}.

\begin{proof}[Proof of Lemma {\rm \ref{lem:resolvent}}]
We prove the assertion (i), since the proof of (ii) is 
similar to that of (i). 
The proofs of uniform estimates 
\eqref{EQ:resolvent_LpLq} and \eqref{EQ:resolvent_am} are done by 
combining Lemma \ref{lem:Gauss} and the following formula: 
\begin{equation*}\label{EQ:formula}
(H_N+M)^{- \beta} = 
\frac{1}{\Gamma(\beta)} 
\int^{\infty}_{0} t^{\beta - 1} e^{-M t} 
e^{-tH_N} \,dt
\end{equation*}
for any $\beta>0$ and $M>0$. 
For more details, see \S 4 in \cite{IMT-bdd}. 
The proof of Lemma \ref{lem:resolvent} is finished.
\end{proof}

Next we prove Lemma \ref{lem:am-bdd}. 
For this purpose, we need a class of operators on 
$L^2(\Omega)$.\\

\noindent{\bf Definition.} 
Let $\alpha>0$ and $\theta>0$. We say that 
$A\in\mathcal A_{\alpha,\theta}$ if $A\in\mathcal B(L^2(\Omega))$ and 
\begin{equation*} 
\vertiii{A}_{\alpha,\theta}
:=\sup_{m\in\mathbb Z^n} 
\Big\|
|\cdot-\theta^{\frac{1}{2}}m|^\alpha A\chi_{C_\theta(m)}
\Big\|_{\mathcal B(L^2(\Omega))}
< \infty,
\end{equation*}
where $\chi_{C_\theta(m)}$ is the characteristic 
function on the cubes $C_\theta(m)$.

\vspace{2mm}

\begin{proof}[Proof of Lemma {\rm \ref{lem:am-bdd}}]
Let $\theta>0$. 
By Lemma \ref{lem:A-bdd}, the operator $\phi(\theta H_N)$ belongs to 
$\mathcal{A}_{\alpha,\theta}$ for any $\alpha >0$. Choosing $\alpha>n/2$, and applying Lemma \ref{lem:suffi} to $\phi(\theta H_N)$, we estimate
\begin{equation*}\label{EQ:3-3}
\begin{split}
&\|\phi(\theta H_N) f\|_{l^1(L^2)_\theta}\\
\le &\, 
C \left( \|\phi(\theta H_N)\|_{\mathcal{B}(L^2(\Omega))} 
+ \theta^{-\frac{n}{4}} {\vertiii{\phi(\theta H_N)}}^{\frac{n}{2\alpha}}_{\alpha,\theta} 
\|\phi(\theta H_N) \|^{1-\frac{n}{2\alpha}}_{\mathcal{B}(L^2(\Omega))} \right)
\|f\|_{l^1(L^2)_\theta}
\end{split}
\end{equation*}
for any $f \in l^1(L^2)_\theta$.
Hence, noting from \eqref{EQ:A-bdd_1} in Lemma \ref{lem:A-bdd} that
\[
\|\phi(\theta H_N)\|_{\mathcal{B}(L^2(\Omega))} \le \|\phi\|_{L^\infty(\mathbb R)}
\]
and 
\[
{\vertiii{\phi(\theta H_N)}}
_{\alpha,\theta} \le C \theta^{\frac{\alpha}{2}},
\]
we conclude \eqref{EQ:am-bdd}. The proof of Lemma \ref{lem:am-bdd} is finished.
\end{proof}

\subsection{Proof of Lemma \ref{prop:Lp}} \label{sec:3.2}
In this subsection we prove Lemma \ref{prop:Lp}. 
For the proof of $L^p$-$L^q$-estimates \eqref{EQ:Lp}, it is sufficient to prove that
\begin{equation}\label{EQ:L1-grad}
\left\|
\phi(\theta H_N)
\right\|_{\mathcal B(L^1(\Omega))}
\le C.
\end{equation}
In fact, if \eqref{EQ:L1-grad} is proved, then
$L^\infty$-estimate is obtained by duality. 
Applying the Riesz-Thorin interpolation theorem, we get 
$L^p$-estimates
\begin{equation}\label{EQ:ppp}
\left\|
\phi(\theta H_N)
\right\|_{\mathcal B(L^p(\Omega))}
\le C
\end{equation}
for any $1\le p\le \infty$. Combining \eqref{EQ:ppp} and 
the resolvent 
estimates \eqref{EQ:resolvent_LpLq} in Lemma 
\ref{lem:resolvent}, we conclude the required $L^p$-$L^q$-estimates \eqref{EQ:Lp}. 
For more details, see the proof of Theorem 1.1 from \cite{IMT-bdd}. \\

Let us now concentrate on proving \eqref{EQ:L1-grad}
for any $\theta>0$. 
By the definition of $l^1(L^2)_{\theta}$, we have
\begin{equation}\label{EQ:3-4}
\begin{split}
\left\|
\phi(\theta H_N) f
\right\|_{L^1(\Omega)}
\le &
\sum_{m\in\mathbb Z^n}
|C_\theta(m)|^{\frac{1}{2}} 
\|\phi(\theta H_N) f
\|_{L^2(C_{\theta}(m))}\\
\le &
\theta^{\frac{n}{4}} 
\|\phi(\theta H_N) f
\|_{l^1(L^2)_{\theta}}
\end{split}
\end{equation}
for any $f\in L^1(\Omega)$, 
where we used 
\[
|C_\theta(m)|\le \theta^{\frac{n}{2}}
\quad \text{for any $m\in\mathbb Z^n$.}
\]
Here, given $M>0$ and $\beta>n/4$, 
we choose $\tilde{\phi} \in \mathcal S(\mathbb R)$ such that
\[
\tilde{\phi}(\lambda)
=
(\lambda+M)^\beta \phi(\lambda) \quad \text{for $\lambda\ge0$}.
\]
Combining Lemmas \ref{lem:resolvent} and \ref{lem:am-bdd}, we deduce that
\begin{equation}\label{EQ:3-5}
\begin{split}
\|\phi(\theta H_N) f
\|_{l^1(L^2)_{\theta}}
& =
\|\tilde{\phi}(\theta H_N)(\theta H_N+M)^{-\beta} f
\|_{l^1(L^2)_{\theta}}\\
& \le
C 
\|(\theta H_N+M)^{-\beta} f
\|_{l^1(L^2)_{\theta}}\\
& \le
C \theta^{-\frac{n}{4}}
\|f\|_{L^1(\Omega)}
\end{split}
\end{equation}
for any $f\in L^1(\Omega)$. 
Thus, combining \eqref{EQ:3-4} and \eqref{EQ:3-5}, we conclude \eqref{EQ:L1-grad}
for any $\theta>0$. 
The present argument is also effective in the case when $\Omega$ is the bounded domain, and hence, we get the estimate \eqref{EQ:Lp}
for any $0<\theta\le1$. Thus, all we have to do is to prove the estimate \eqref{EQ:Lp2}
for any $\theta>1$
in the assertion (ii). \\

We prove \eqref{EQ:Lp2} for $\theta>1$. 
Since the support of $\phi$ is away from the origin, we write
\[
\phi(\theta H) = P\phi(\theta H).
\]
Let $f\in L^1(\Omega)\cap L^2(\Omega)$. 
Then, by using the estimate \eqref{EQ:heat-L2} and the above identity, we deduce that
\begin{equation}\label{EQ:bdd_1}
\begin{split}
\|\phi(\theta H_N)f\|_{L^1(\Omega)}
& \le |\Omega|^{\frac{1}{2}}\|\phi(\theta H_N)f\|_{L^2(\Omega)}\\
& =
|\Omega|^{\frac{1}{2}}\|Pe^{-\theta H_N}e^{2\theta H_N}\phi(\theta H_N)e^{-\theta H_N}f\|_{L^2(\Omega)}\\
& \le
C|\Omega|^{\frac{1}{2}} e^{-\lambda_2 \theta}\| e^{2\theta H_N}\phi(\theta H_N)e^{-\theta H_N}f\|_{L^2(\Omega)}.
\end{split}
\end{equation}
Since the support of $\phi$ is compact, it follows that
\[
e^{2\lambda}\phi(\lambda)\in L^\infty(\mathbb R),
\]
and hence, 
\begin{equation}\label{EQ:bdd_2}
\| e^{2\theta H_N}\phi(\theta H_N)e^{-\theta H_N}f\|_{L^2(\Omega)}
\le 
C \|e^{-\theta H_N}f\|_{L^2(\Omega)}.
\end{equation}
Therefore, we deduce from \eqref{EQ:bdd_1} and \eqref{EQ:bdd_2} that
\begin{equation}\label{EQ:eee}
\|\phi(\theta H_N)f\|_{L^1(\Omega)}
\le 
C|\Omega|^{\frac{1}{2}} e^{-\lambda_2 \theta}
\|e^{-\theta H_N}f\|_{L^2(\Omega)}.
\end{equation}
On the other hand, it follows from the estimate \eqref{EQ:bdd_e} for $t=\theta>1$ that
\begin{equation*}
\|e^{-\theta H_N}f\|_{L^\infty(\Omega)}
\le C\theta^{\frac{n}{4}}\|f\|_{L^2(\Omega)},
\end{equation*}
and hence, by duality argument we deduce that
\begin{equation}\label{EQ:bdd_3}
\|e^{-\theta H_N}f\|_{L^2(\Omega)}
\le C\theta^{\frac{n}{4}}\|f\|_{L^1(\Omega)}.
\end{equation}
Hence, combining \eqref{EQ:eee} and \eqref{EQ:bdd_3}, we obtain 
\[
\left\|
\phi(\theta H_N)
\right\|_{\mathcal B(L^1(\Omega))}
\le C|\Omega|^{\frac{1}{2}}\theta^{\frac{n}{4}} e^{-\lambda_2 \theta}
\]
for any $\theta>1$. Thus, performing the previous argument,
we conclude the estimate \eqref{EQ:Lp2} in the assertion (ii). 
The proof of Lemma \ref{prop:Lp} is finished.

\section{Gradient estimates for spectral multipliers}\label{sec:4}
In this section we prove the gradient estimates for spectral multipliers, 
which will be useful in proving bilinear estimates. 

Let us consider the domain $\Omega$ such that the following estimate holds:
\begin{equation}\label{EQ:final}
\|\nabla e^{-tH_N}\|_{\mathcal B(L^\infty(\Omega))} \le 
C t^{-\frac{1}{2}}
\end{equation}
either for any $0 < t \le 1$, or for any $t>0$, where $C>0$ is the constant independent of $t$. 
When $\Omega$ is an exterior domain in $\mathbb R^n$, $n\geq3$, with compact and 
smooth boundary, the estimate \eqref{EQ:final} 
for $t>0$ is proved by Ishige 
(see \cite{Ish-2009}). As to the case when $\Omega$ is a bounded domain, 
we have the following:  

\begin{prop}\label{prop:C}
Let $\Omega$ be a bounded and smooth domain in $\mathbb R^n$ with $n\ge1$. 
Then 
the estimate \eqref{EQ:final} holds for any $t>0$.
\end{prop}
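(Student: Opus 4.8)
\textbf{Proof proposal for Proposition \ref{prop:C}.}
The plan is to derive the gradient estimate \eqref{EQ:final} for all $t>0$ from two ingredients: the short-time estimate, which is classical for bounded smooth domains, and the exponential decay of $Pe^{-tH_N}$ together with the spectral gap of $H_N$ on $\mathcal E^\perp$. First I would recall that for a bounded domain with smooth boundary the Neumann heat semigroup is ultracontractive and its gradient satisfies the short-time bound $\|\nabla e^{-tH_N}\|_{\mathcal B(L^\infty(\Omega))}\le Ct^{-1/2}$ for $0<t\le1$; this is a standard local parabolic regularity statement (or can be read off from the Gaussian bounds \eqref{EQ:Gauss1} for $H_N$ and the corresponding gradient heat-kernel bounds in the smooth bounded case). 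So the only real content is to upgrade this to all $t>0$, i.e. to control $\|\nabla e^{-tH_N}\|_{\mathcal B(L^\infty(\Omega))}$ for $t>1$.

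For $t>1$ I would use the semigroup property $e^{-tH_N}=e^{-(t-1)H_N}e^{-H_N}$ and split according to the spectral decomposition $L^2(\Omega)=\mathcal E\oplus\mathcal E^\perp$. On $\mathcal E$ (the constants) the gradient kills everything, so $\nabla e^{-tH_N}f=\nabla e^{-tH_N}Pf$, where $P=\chi_{(0,\infty)}(H_N)$ as in the excerpt. Then write
\begin{equation*}
\nabla e^{-tH_N}Pf
=\big(\nabla e^{-\frac12 H_N}\big)\, \big(Pe^{-(t-\frac12)H_N}f\big),
\end{equation*}
and estimate the two factors separately: the first factor is bounded from $L^\infty(\Omega)$ to $L^\infty(\Omega)$ by the already-established short-time gradient bound at the fixed time $1/2$, hence by a constant; for the second factor I would combine the $L^1$--$L^\infty$ bound on $Pe^{-sH_N}$ coming from \eqref{EQ:Gauss3} of Lemma \ref{lem:Gauss} (with the exponential factor $e^{-\mu s}$), the embedding $L^\infty(\Omega)\hookrightarrow L^1(\Omega)$ valid since $|\Omega|<\infty$, to get $\|Pe^{-(t-\frac12)H_N}f\|_{L^\infty(\Omega)}\le C e^{-\mu (t-\frac12)}\|f\|_{L^\infty(\Omega)}$ for $t>1$. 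Multiplying, $\|\nabla e^{-tH_N}f\|_{L^\infty(\Omega)}\le C e^{-\mu t}\|f\|_{L^\infty(\Omega)}\le C t^{-1/2}\|f\|_{L^\infty(\Omega)}$ for $t>1$, since $e^{-\mu t}\le C_\mu t^{-1/2}$ there. Combining with the $0<t\le1$ bound gives \eqref{EQ:final} for all $t>0$.

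The step I expect to be the main obstacle is the short-time gradient bound $\|\nabla e^{-tH_N}\|_{\mathcal B(L^\infty)}\le Ct^{-1/2}$ for $0<t\le1$ on a bounded smooth domain: one must invoke (or reprove) gradient estimates for the Neumann heat kernel up to the boundary, which require the smoothness hypothesis in an essential way (the bound can fail on rough domains). I would cite the standard references on Neumann heat-kernel gradient bounds for smooth domains, or alternatively deduce it from a boundary parabolic regularity estimate applied to the smooth bounded domain; once that is in hand, the passage to $t>1$ is the routine spectral-gap argument sketched above, and the only care needed is to make all constants independent of $t$ by exploiting $|\Omega|<\infty$ and the uniform decay in \eqref{EQ:Gauss3}.
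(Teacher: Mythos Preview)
Your proposal is correct and follows essentially the same approach as the paper: accept the short-time bound $0<t\le1$ as classical for smooth bounded domains, then for $t>1$ use $\nabla e^{-tH_N}f=\nabla e^{-tH_N}Pf$, split off a fixed short-time piece $\nabla e^{-\frac12 H_N}$, and exploit the spectral gap to get exponential decay of the remaining factor. The only cosmetic difference is that the paper routes the large-time factor through $L^2$ (using \eqref{EQ:bdd_e} and \eqref{EQ:heat-L2}) rather than invoking the kernel bound \eqref{EQ:Gauss3} directly as you do; both give the same conclusion.
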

\begin{proof}
When 
$\Omega$ is bounded and smooth, the estimate \eqref{EQ:final} for $0<t\leq 1$ holds (see, e.g., \S1 in \cite{Ish-2009}). 
Hence it is sufficient to prove \eqref{EQ:final} for $t\ge1$. 
We note that
\begin{equation}\label{EQ:iden-e}
\nabla e^{-t H_N}g = \nabla e^{-t H_N}g_0^{\perp}= \nabla e^{-t H_N} Pg, 
\end{equation}
where $g=g_0+g_0^{\perp}$ with $g_0\in \mathcal E$ and $g_0^{\perp}\in \mathcal E^{\perp}$.
Then, writing 
\[
\|\nabla e^{-tH_N}f\|_{L^\infty(\Omega)}
= 
\|\nabla e^{-\frac{1}{2}H_N}e^{-\frac{1}{2}H_N}Pe^{-(t-1)H_N}f\|_{L^\infty(\Omega)},
\]
and applying \eqref{EQ:final} for $t=1/2$ to the right member of the above equation,
we get
\[
\|\nabla e^{-tH_N}f\|_{L^\infty(\Omega)}
\le 
C\|e^{-\frac{1}{2}H_N}Pe^{-(t-1)H_N}f\|_{L^\infty(\Omega)}.
\]
Hence, applying \eqref{EQ:bdd_e} to the right member of the above estimate,
we find that 
\begin{equation*}
\|\nabla e^{-tH_N}f\|_{L^\infty(\Omega)}
\le 
C\|Pe^{-(t-1)H_N}f\|_{L^2(\Omega)}
\end{equation*}
for any $t>1$ and $f\in L^\infty(\Omega)$. Here, thanks to 
$L^2$-estimate \eqref{EQ:heat-L2} and H\"older's inequality,
there exists a constant $\mu>0$
such that
\begin{equation*}
\begin{split}
\|Pe^{-(t-1)H_N}f\|_{L^2(\Omega)}
& \le 
Ce^{-\mu t}\|f\|_{L^2(\Omega)}\\
& \le 
C|\Omega|^{\frac{1}{2}}e^{-\mu t}\|f\|_{L^\infty(\Omega)}
\end{split}
\end{equation*}
for any $t>1$ and $f\in L^\infty(\Omega)$. Hence, combining two estimates obtained now, we get the estimate \eqref{EQ:final} for any $t>1$. 
\end{proof}

We shall prove here the following. 

\begin{prop}\label{cor:gradient}
Assume that $\Omega$ 
is a Lipschitz domain in $\R^n$ with compact boundary, where $n\ge3$ if 
$\Omega$ is unbounded, and $n\ge1$ if $\Omega$ is bounded. 
Let $1 \le p \le \infty$, and let $\{\psi\}\cup\{\phi_j\}_j$ be functions given by \eqref{EQ:phi1}, \eqref{EQ:phi2} and \eqref{EQ:psi}. 
Then the following assertions hold{\rm :}
\begin{itemize}
\item[(i)] 
Assume further that $\Omega$ is a domain such that the gradient estimate \eqref{EQ:final} holds for any $0<t\le1$.
Then there exists a constant $C>0$ such that
\begin{equation}\label{EQ:psi-gra}
\|\nabla \psi(2^{-2j}H_N)\|_{\mathcal B(L^p(\Omega))}
\le C2^j,
\end{equation}
\begin{equation}\label{EQ:phi-gra}
\|\nabla \phi_j(\sqrt{H_N})\|_{\mathcal B(L^p(\Omega))}
\le C 2^{j}
\end{equation}
for any $j\in\mathbb N$.
\item[(ii)] 
Assume further that $\Omega$ is a domain such that the gradient estimate \eqref{EQ:final} holds for any $t>0$.
Then the estimates \eqref{EQ:psi-gra} and \eqref{EQ:phi-gra} hold for any $j\in\mathbb Z$.
\end{itemize}
\end{prop}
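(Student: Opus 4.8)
The plan is to derive the gradient estimates for the spectral multipliers from the hypothesized gradient bound \eqref{EQ:final} for the heat semigroup, in exactly the same spirit as the $L^p$-$L^q$-estimates of Proposition \ref{cor:Lp} were derived from the Gaussian bounds via Lemma \ref{prop:Lp}. First I would reduce everything to an estimate of the form
\begin{equation}\notag
\|\nabla\phi(\theta H_N)\|_{\mathcal B(L^p(\Omega))}\le C\theta^{-\frac12}
\end{equation}
for $\phi\in\mathcal S(\mathbb R)$ (with $\phi\in C_0^\infty((0,\infty))$ in the bounded case when we need the full range $j\in\mathbb Z$), uniformly in $\theta>0$ when \eqref{EQ:final} holds for all $t>0$, and uniformly in $0<\theta\le1$ when \eqref{EQ:final} holds only for small $t$. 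Specializing $\theta=2^{-2j}$ and taking $\phi=\psi$ or $\phi=\phi_0$ then yields \eqref{EQ:psi-gra} and \eqref{EQ:phi-gra}, since $\phi_j(\sqrt{H_N})=\phi_0(2^{-j}\sqrt{H_N})=\Phi(2^{-2j}H_N)$ with $\Phi(\lambda)=\phi_0(\sqrt\lambda)\in C_0^\infty((0,\infty))$.

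Next I would prove the uniform gradient bound on $L^\infty(\Omega)$ by the subordination/resolvent trick already used for Lemma \ref{prop:Lp}: write, for $M>0$ and $\beta>n/4$ chosen so that $(\theta H_N+M)^{-\beta}\in\mathcal B(L^\infty(\Omega),L^\infty(\Omega))$ with the right $\theta$-behaviour (Lemma \ref{lem:resolvent}, applied with $p=q=\infty$ after a routine limiting argument, or with $p=\infty$ and the $l^\infty(L^q)$ version), and factor
\begin{equation}\notag
\nabla\phi(\theta H_N)=\bigl(\nabla e^{-\theta H_N}\bigr)\,\bigl[e^{\theta H_N}\widetilde\phi(\theta H_N)(\theta H_N+M)^{-\beta}\bigr]\,(\theta H_N+M)^{\beta}\phi(\theta H_N)\cdot(\ldots),
\end{equation}
or more cleanly: put $\widetilde\phi(\lambda)=e^{\lambda}(\lambda+M)^{\beta}\phi(\lambda)$, which lies in $\mathcal S(\mathbb R)$ (or has the right decay) so that $\widetilde\phi(\theta H_N)\in\mathcal B(L^\infty(\Omega))$ uniformly by the $L^\infty$-boundedness coming from \eqref{EQ:L1-grad} and duality, and write
\begin{equation}\notag
\nabla\phi(\theta H_N)=\theta^{-\frac12}\bigl(\theta^{\frac12}\nabla e^{-\theta H_N}\bigr)\,\widetilde\phi(\theta H_N)\,(\theta H_N+M)^{-\beta}.
\end{equation}
By \eqref{EQ:final}, $\theta^{1/2}\nabla e^{-\theta H_N}$ is bounded on $L^\infty(\Omega)$ uniformly (in the relevant range of $\theta$), $\widetilde\phi(\theta H_N)$ is bounded on $L^\infty(\Omega)$ uniformly, and $(\theta H_N+M)^{-\beta}$ is bounded on $L^\infty(\Omega)$ uniformly by Lemma \ref{lem:resolvent}; composing gives $\|\nabla\phi(\theta H_N)\|_{\mathcal B(L^\infty(\Omega))}\le C\theta^{-1/2}$. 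For general $p$ I would interpolate: the dual/adjoint formulation gives the $L^1(\Omega)$ estimate for the divergence-form operator, and the Riesz--Thorin theorem then covers $1\le p\le\infty$, just as in \S\ref{sec:3.2}; alternatively one notes $\nabla\phi(\theta H_N)=\nabla(\theta H_N)^{1/2}\cdot(\theta H_N)^{-1/2}\phi(\theta H_N)\cdot\theta^{-1/2}$ and uses that the Riesz transform $\nabla H_N^{-1/2}$ and its powers are $L^p$-bounded in this setting, but the interpolation route is closer to the paper's own machinery and avoids invoking Riesz transform bounds.

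Finally, for the bounded domain and the full range $j\in\mathbb Z$ in part (ii), the point is that for $j\le0$ the operator $\phi_j(\sqrt{H_N})=\Phi(2^{-2j}H_N)$ has symbol supported away from $0$, so $\Phi(2^{-2j}H_N)=P\Phi(2^{-2j}H_N)$ and the spectral gap \eqref{EQ:eigenvalues} forces exponential decay $e^{-\mu 2^{-2j}}$—exactly the mechanism in \eqref{EQ:Gauss3} and \eqref{EQ:eee}—which more than absorbs the $\theta^{-1/2}=2^{j}$ factor and keeps the bound $\le C2^{j}$; for $j\ge1$ the uniform-in-$0<\theta\le1$ estimate applies directly. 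The main obstacle I anticipate is the bookkeeping in the bounded case: one must carefully combine \eqref{EQ:final} for small $t$ with the projected heat-kernel decay \eqref{EQ:Gauss3} (as was done for $\nabla e^{-tH_N}$ in Proposition \ref{prop:C}) to get the uniform gradient bound for $e^{-\theta H_N}$ at large $\theta$, and then feed that into the resolvent-factorization argument; the unbounded case is cleaner because \eqref{EQ:final} is assumed for all $t>0$ outright. Everything else is a direct transcription of the arguments in \S\ref{sec:3}.
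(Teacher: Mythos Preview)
Your $L^\infty$ argument is essentially the paper's: factor $\nabla\phi(\theta H_N)=(\nabla e^{-\theta H_N})\bigl(e^{\theta H_N}\phi(\theta H_N)\bigr)$, use \eqref{EQ:final} on the first factor and the $L^\infty$-boundedness of the second (a $C_0^\infty$-multiplier, hence Lemma~\ref{prop:Lp} applies; in the bounded case with $\psi$ one first removes the zero mode as you indicate). The resolvent factor you insert is harmless but unnecessary.

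The genuine gap is the $L^1$ endpoint. You write that ``the dual/adjoint formulation gives the $L^1(\Omega)$ estimate for the divergence-form operator'' and then interpolate. But the adjoint of $\nabla\phi(\theta H_N):L^\infty\to (L^\infty)^n$ is $-\phi(\theta H_N)\,\mathrm{div}:(L^1)^n\to L^1$, a \emph{different} operator; duality therefore does not yield $\|\nabla\phi(\theta H_N)\|_{\mathcal B(L^1)}$, and without that bound Riesz--Thorin cannot cover $1\le p<\infty$. This is exactly where the analogy with \S\ref{sec:3.2} breaks down: there the operator $\phi(\theta H_N)$ is self-adjoint, here $\nabla\phi(\theta H_N)$ is not.

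The paper supplies the missing endpoint by an independent argument (Lemma~\ref{prop:gradient}) that does \emph{not} use \eqref{EQ:final} at all: the $L^2$ bound is immediate from $\|\nabla g\|_{L^2}^2=\langle H_N g,g\rangle$ and spectral calculus, and the $L^1$ bound is obtained by repeating the amalgam-space machinery of \S\ref{sec:3.1} with the commutator estimate \eqref{EQ:A-bdd_2} for $\nabla\phi(\theta H_N)$ in place of \eqref{EQ:A-bdd_1}. One then interpolates between this $L^1$ bound and the $L^\infty$ bound coming from \eqref{EQ:final}. Your alternative via the Riesz transform $\nabla H_N^{-1/2}$ would also close the gap, but it imports an $L^p$-boundedness result that is neither assumed nor proved in the paper and is in general delicate for the Neumann Laplacian on Lipschitz domains.
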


For the proof of Proposition \ref{cor:gradient}, 
we need the following.

\begin{lem}
\label{prop:gradient}
Assume that $\Omega$ 
is a Lipschitz domain in $\R^n$ with compact boundary, where $n\ge3$ if 
$\Omega$ is unbounded, and $n\ge1$ if $\Omega$ is bounded. 
Let $\phi \in C^\infty_0(\mathbb R)$. 
Then $\phi(H_N)$ is extended to a bounded linear operator from $L^p(\Omega)$ to $W^{1,p}(\Omega)$ provided that $1\le p \le 2$. Furthermore, 
there exists a constant $C>0$ such that
\begin{equation*}\label{EQ:gradient}
\|\nabla \phi(\theta H_N)\|_{\mathcal B(L^p(\Omega))} \le C \theta^{-\frac{1}{2}}
\end{equation*}
for any $\theta>0$.
\end{lem}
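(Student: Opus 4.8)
\textbf{Plan of proof for Lemma \ref{prop:gradient}.}

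The strategy is to reduce the gradient bound on $\phi(\theta H_N)$ to a factorization through the semigroup, exactly as in the proof of Lemma \ref{prop:Lp}, and then to exploit the gradient estimate \eqref{EQ:final} on $L^\infty(\Omega)$ together with duality and interpolation. First, I would record that $\phi(H_N)$ maps $L^p(\Omega)$ into $W^{1,p}(\Omega)$ for $1\le p\le 2$: from Lemma \ref{prop:Lp} applied with $q=p$ we already know $\phi(\theta H_N)\in\mathcal B(L^p(\Omega))$, so only the gradient part needs attention, which is the content of the displayed estimate. For that, fix $\theta>0$ and write, using that $\phi\in C_0^\infty(\mathbb R)$,
\[
\nabla\phi(\theta H_N) = \nabla e^{-\theta H_N}\,\psi_\theta(\theta H_N),
\qquad \psi_\theta(\lambda):= e^{\lambda}\phi(\lambda)\in\mathcal S(\mathbb R),
\]
so that the scaling $t=\theta$ in \eqref{EQ:final} controls the first factor on $L^\infty$. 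The function $\psi_\theta$ is independent of $\theta$ as an element of $\mathcal S(\mathbb R)$ (it is just $e^{\lambda}\phi(\lambda)$), and by Lemma \ref{prop:Lp} the operator $\psi_\theta(\theta H_N)=\big(e^{\bullet}\phi\big)(\theta H_N)$ is bounded on $L^p(\Omega)$ uniformly in $\theta>0$ when $\Omega$ is unbounded, and for $0<\theta\le 1$ when $\Omega$ is bounded; the range $\theta>1$ in the bounded case is handled by the exponential gain \eqref{EQ:Lp2} together with \eqref{EQ:final} for $t=1$, absorbing polynomial factors of $\theta$.

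The core estimate is obtained on $L^\infty$ and $L^1$, then interpolated. On $L^\infty(\Omega)$:
\[
\|\nabla\phi(\theta H_N)f\|_{L^\infty(\Omega)}
\le \|\nabla e^{-\theta H_N}\|_{\mathcal B(L^\infty(\Omega))}\,\|\psi_\theta(\theta H_N)f\|_{L^\infty(\Omega)}
\le C\theta^{-\frac12}\|f\|_{L^\infty(\Omega)},
\]
using \eqref{EQ:final} and the $L^\infty$-boundedness of $\psi_\theta(\theta H_N)$ from Lemma \ref{prop:Lp}. For the $L^1$ endpoint I would split $\nabla\phi(\theta H_N)=\big(\nabla e^{-\frac\theta2 H_N}\big)\big(e^{\frac\theta2 H_N}\psi_\theta(\theta H_N)e^{-\frac\theta2 H_N}\big)e^{\frac\theta2 H_N}$... more cleanly, write $\nabla\phi(\theta H_N) = \nabla e^{-\frac{\theta}{2}H_N}\,\widetilde\psi(\theta H_N)$ with $\widetilde\psi(\lambda)=e^{\lambda/2}\phi(\lambda)\in\mathcal S(\mathbb R)$, and take adjoints: since $(\nabla e^{-\frac\theta2 H_N})^* = e^{-\frac\theta2 H_N}\,\mathrm{div}$ and $\widetilde\psi(\theta H_N)^*=\overline{\widetilde\psi}(\theta H_N)$ is bounded $L^\infty\to L^\infty$ uniformly, the $L^\infty$ bound for the adjoint gives the $L^1$ bound $\|\nabla\phi(\theta H_N)\|_{\mathcal B(L^1(\Omega))}\le C\theta^{-\frac12}$; alternatively, and perhaps more transparently, factor as $\nabla\phi(\theta H_N) = \big(\nabla e^{-\frac\theta 2 H_N}\big)\circ\phi_1(\theta H_N)$ with $\phi_1=e^{\bullet/2}\phi$, estimate $\nabla e^{-\frac\theta2H_N}:L^1\to L^1$ by duality from \eqref{EQ:final}, and use $\phi_1(\theta H_N)\in\mathcal B(L^1(\Omega))$ from Lemma \ref{prop:Lp} (estimate \eqref{EQ:L1-grad}). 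Then the Riesz--Thorin interpolation theorem between the $L^1$ and $L^\infty$ bounds yields $\|\nabla\phi(\theta H_N)\|_{\mathcal B(L^p(\Omega))}\le C\theta^{-\frac12}$ for all $1\le p\le\infty$, in particular for $1\le p\le 2$; the restriction $p\le 2$ is only needed for the mapping statement into $W^{1,p}$, which combines this gradient bound with the $L^p$-boundedness of $\phi(\theta H_N)$ itself and the fact that Lemma \ref{prop:Lp} controls the $L^p$ norm for all $1\le p\le\infty$.

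The main obstacle is the passage from the $L^\infty$ gradient estimate \eqref{EQ:final} to an $L^1$ (or $L^p$) gradient estimate, since $\nabla$ is not self-adjoint and one must justify that the adjoint of $\nabla e^{-tH_N}$ acting between these spaces is $e^{-tH_N}\,\mathrm{div}$ with the correct boundary behavior under the Neumann realization; a clean way around this is to avoid duality for the gradient and instead use the factorization $\nabla\phi(\theta H_N)=(\nabla e^{-\frac{\theta}{2}H_N})\,\phi_1(\theta H_N)$, apply \eqref{EQ:final} with $t=\theta/2$ to the first factor on $L^\infty$, apply \eqref{EQ:L1-grad} and its $L^\infty$ analogue (obtained by duality, which is unproblematic for the scalar operator $\phi_1(\theta H_N)$) to the second factor, and interpolate — thereby never needing to dualize the gradient. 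Once the uniform bound is in hand, choosing $\theta=2^{-2j}$ in Lemma \ref{prop:gradient} and recalling $\phi_j(\sqrt{H_N})=\phi_0(2^{-2j}H_N^{\,})$ with $\phi_0(\lambda^2)\in C_0^\infty$, respectively $\psi(2^{-2j}H_N)$ with $\psi\in C_0^\infty$, gives \eqref{EQ:psi-gra} and \eqref{EQ:phi-gra} in Proposition \ref{cor:gradient}, for $j\in\mathbb N$ when \eqref{EQ:final} holds only for $0<t\le1$ and for all $j\in\mathbb Z$ when it holds for all $t>0$.
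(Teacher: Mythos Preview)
Your proposal has a genuine gap: it relies throughout on the $L^\infty$ gradient estimate \eqref{EQ:final} for the semigroup, but \eqref{EQ:final} is \emph{not} a hypothesis of Lemma~\ref{prop:gradient}. The lemma is stated for an arbitrary Lipschitz domain with compact boundary, whereas \eqref{EQ:final} is an additional assumption imposed only in Proposition~\ref{cor:gradient}. Indeed, the logical flow in the paper is the reverse of what you suggest: Lemma~\ref{prop:gradient} supplies the $p=1$ endpoint \emph{without} \eqref{EQ:final}, and then Proposition~\ref{cor:gradient} brings in \eqref{EQ:final} to handle the $p=\infty$ endpoint and interpolate. Your argument would make the lemma depend on an assumption it is not allowed to use.

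Even granting \eqref{EQ:final}, your route to the $L^1$ bound does not close. You correctly note that $\nabla$ is not self-adjoint, so the $L^\infty\to L^\infty$ bound for $\nabla e^{-tH_N}$ dualizes to a bound for $e^{-tH_N}\,\mathrm{div}$ on $L^1$, not for $\nabla e^{-tH_N}$ on $L^1$; your ``clean way around'' still only produces an $L^\infty$ estimate for the composition, leaving nothing at $p=1$ to interpolate with. The paper avoids this entirely: the $L^2$ bound comes straight from the form identity $\|\nabla g\|_{L^2}^2=\langle H_N g,g\rangle_{L^2}$ applied to $g=\phi(\theta H_N)f$, and the $L^1$ bound is obtained by the amalgam-space machinery, using the weighted $L^2$ estimate \eqref{EQ:A-bdd_2} for $\nabla\phi(\theta H_N)$ (Lemma~\ref{lem:A-bdd}) together with Lemma~\ref{lem:suffi}, exactly parallel to the proof of \eqref{EQ:L1-grad}. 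For bounded $\Omega$ and $\theta>1$, the spectral gap and the $L^2$ gradient bound on $Pe^{-\theta H_N}$ absorb the polynomial losses. None of this requires \eqref{EQ:final}.
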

\begin{proof} 
Since
\begin{equation*}
\begin{split} \|\nabla \phi(\theta H_N)f \|^2_{L^2(\Omega)}
&=\langle H_N\phi(\theta H_N)f,\phi(\theta H_N)f\rangle_{L^2(\Omega)}\\
&\leq \|H_N\phi(\theta H_N)f\|_{L^2(\Omega)}\|\phi(\theta H_N)f\|_{L^2(\Omega)},
\end{split}
\end{equation*}
by using 
\[
H_N\phi(\theta H_N)f=\int^\infty_{-\infty}\lambda \phi(\theta \lambda)\, dE_{H_N}(\lambda)f,
\]
we readily see that
\begin{equation*}
\|\nabla \phi(\theta H_N)\|_{\mathcal B(L^2(\Omega))} \le C \theta^{-\frac{1}{2}}
\end{equation*}
for any $\theta>0$. Hence, taking account of the Riesz-Thorin theorem, 
we have only to prove that
\begin{equation}\label{EQ:grad-L1}
\|\nabla \phi(\theta H_N)\|_{\mathcal B(L^1(\Omega))} \le C \theta^{-\frac{1}{2}}.
\end{equation}

\vspace{5mm}

When $\Omega$ is unbounded, we need the following estimate: 
\begin{equation}\label{EQ:am-bdd-g_1}
\|\nabla\phi(\theta H_N)\|_{\mathcal B(l^1(L^2)_\theta)}
\le C \theta^{-\frac{1}{2}}
\end{equation}
for any $\theta >0$. The estimate \eqref{EQ:am-bdd-g_1} 
is proved by the same argument as in 
\eqref{EQ:am-bdd} from Lemma \ref{lem:am-bdd} if 
we use the estimate \eqref{EQ:A-bdd_2} instead of \eqref{EQ:A-bdd_1} in Lemma \ref{lem:A-bdd}. 
Thus the estimate \eqref{EQ:grad-L1} for any $\theta>0$ is proved in a similar way to the assertion (i) in Lemma \ref{prop:Lp}. 
When $\Omega$ is bounded, 
the estimate \eqref{EQ:grad-L1} for $0<\theta\le1$ is obtained in a similar way to the 
unbounded case. Hence all we have to do is to prove \eqref{EQ:grad-L1} for $\theta>1$ in 
the case when $\Omega$ is bounded. 

By the same argument as in \eqref{EQ:iden-e}, 
we deduce from \eqref{EQ:heat-L2} that
\begin{equation}\label{EQ:L2-grad}
\begin{split}
\|\nabla e^{-\theta H_N}g\|_{L^2(\Omega)}^2
& = 
\|\nabla e^{-\theta H_N}Pg \|_{L^2(\Omega)}^2\\
& \le 
\|H_N e^{-\theta H_N}Pg\|_{L^2(\Omega)} \|e^{-\theta H_N}Pg \|_{L^2(\Omega)}\\
& \le 
\theta^{-1} e^{-2\lambda_2 \theta}\|g\|_{L^2(\Omega)}^2
\end{split}
\end{equation}
for any $g\in L^2(\Omega)$. 
Now we estimate
\begin{equation}\label{EQ:ggg}
\begin{split}
\|\nabla \phi(\theta H_N)f\|_{L^1(\Omega)}
& \le |\Omega|^{\frac{1}{2}}
\|\nabla \phi(\theta H_N)f\|_{L^2(\Omega)}\\
& = |\Omega|^{\frac{1}{2}}
\|\nabla e^{-\theta H_N}\phi(\theta H_N)e^{2\theta H_N}e^{-\theta H_N}f\|_{L^2(\Omega)}
\end{split}
\end{equation}
for any $f\in L^1(\Omega)\cap L^2(\Omega)$.
Then, by using \eqref{EQ:L2-grad}, we estimate 
the right member of \eqref{EQ:ggg} as 
\begin{equation}\label{EQ:hhh}
\begin{split}
\|\nabla e^{-\theta H_N}\phi(\theta H_N)e^{2\theta H_N}e^{-\theta H_N}f\|_{L^2(\Omega)}
& \le 
\theta^{-\frac{1}{2}}e^{-\lambda_2 \theta}\|\phi(\theta H_N)e^{2\theta H_N}e^{-\theta H_N}f\|_{L^2(\Omega)}\\
& \le C
\theta^{-\frac{1}{2}}e^{-\lambda_2 \theta}\|e^{-\theta H_N}f\|_{L^2(\Omega)}\\
& \le C \theta^{-\frac{1}{2}} \theta^{\frac{n}{4}}
e^{-\lambda_2 \theta}\|f\|_{L^1(\Omega)}
\end{split}
\end{equation}
for any $f\in L^1(\Omega)\cap L^2(\Omega)$, 
where 
we used \eqref{EQ:bdd_3} in the last step.
Thus, combining \eqref{EQ:ggg} and \eqref{EQ:hhh}, 
we conclude the desired $L^1$-estimate by density argument. The proof of Lemma \ref{prop:gradient} is finished.
\end{proof}

We are now in a position to prove Proposition \ref{cor:gradient}.
\begin{proof}[Proof of Proposition {\rm \ref{cor:gradient}}]
We prove only the assertion (ii), since the proof of assertion (i) is similar to that of (ii). 
Thanks to Lemma \ref{prop:gradient} for $p=1$ and the Riesz-Thorin interpolation theorem, it suffice to show that
\begin{equation}\label{EQ:psi-gra1}
\|\nabla \psi(2^{-2j}H_N)\|_{\mathcal B(L^\infty(\Omega))}
\le C2^j,
\end{equation}
\begin{equation}\label{EQ:phi-gra1}
\|\nabla \phi_j(\sqrt{H_N})\|_{\mathcal B(L^\infty(\Omega))}
\le C 2^{j}
\end{equation}
for any $j\in\mathbb Z$.\\

When $\Omega$ is unbounded,
these estimates 
are immediate consequences of 
the gradient estimate \eqref{EQ:final} for $t>0$
and the assertion (i) in Lemma \ref{prop:Lp}. In a similar way, when $\Omega$ is bounded, the estimate \eqref{EQ:phi-gra1} is proved  
by combining the estimate \eqref{EQ:final} with the latter part of the assertion (i) in Lemma \ref{prop:Lp}. We have to prove \eqref{EQ:psi-gra1} for bounded domain case.
Let $f\in L^\infty(\Omega)$. Then we see that 
$f\in L^2(\Omega)$, and hence, following the idea of derivation of 
\eqref{EQ:iden-e}, we write
\[
\nabla \psi(2^{-2j}H_N) f = \nabla \psi(2^{-2j}H_N)F(2^{-2j}H_N) f
\]
for any $j\in\mathbb Z$, where 
$F$ is a smooth and non-negative function on $\mathbb R$ such that
\[
F(\lambda)=
\begin{cases}
1\quad &\text{for }\lambda\ge\lambda_2,\\
0&\displaystyle\text{for }\lambda\le\frac{\lambda_2}{2}.
\end{cases}
\]
Then, combining the estimate \eqref{EQ:final} with 
the estimate \eqref{EQ:Lp2} in Lemma \ref{prop:Lp}, we deduce that
\begin{equation*}
\begin{split}
\|\nabla \psi(2^{-2j}H_N)f\|_{L^\infty(\Omega)}
& =
\|\nabla e^{-2^{-2j}H_N}e^{2^{-2j}H_N}\psi(2^{-2j}H_N)F(2^{-2j}H_N)f\|_{L^\infty(\Omega)}\\
& \le 
C 2^j\|e^{2^{-2j}H_N}\psi(2^{-2j}H_N)F(2^{-2j}H_N)f\|_{L^\infty(\Omega)}\\
& \le 
C 2^j\|f\|_{L^\infty(\Omega)}
\end{split}
\end{equation*}
for any $j\in\mathbb Z$ and $f\in L^\infty(\Omega)$, since 
\[
e^{\lambda}\psi(\lambda)F(\lambda)\in C^\infty_0((0,\infty)).
\]
Thus we obtain the estimate \eqref{EQ:psi-gra} for $p=\infty$. The proof of Proposition \ref{cor:gradient} is now finished.
\end{proof}


\section{Fundamental properties of 
$\mathcal X(\Omega)$, $\mathcal Z(\Omega)$ and their dual spaces}\label{sec:5}

In this section we discuss the fundamental properties of 
$\mathcal X(\Omega)$, $\mathcal Z(\Omega)$ and their dual spaces. Going back to the argument 
of Besov spaces generated by the Dirichlet Laplacian (see \cite{IMT-Besov}), 
we observe that 
the results in this section 
are the foundations 
for proofs of  
Theorems \ref{thm:1}, \ref{prop:property}
and \ref{prop:iso}.\\

Let us impose the assumption on $\Omega$ in \S\ref{sec:3}. The first result is the following.

\begin{prop}\label{prop:Fre}
$\mathcal X(\Omega)$ and $\mathcal Z(\Omega)$ are Fr\'echet spaces. 
\end{prop}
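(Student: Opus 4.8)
\textbf{Proof proposal for Proposition \ref{prop:Fre}.} The plan is to recall that a Fr\'echet space is a complete, metrizable, locally convex topological vector space, and that for a space given by a countable family of seminorms the only substantive point is completeness (local convexity and metrizability being automatic from the countable seminorm structure). So the heart of the matter is to show that every Cauchy sequence in $\mathcal X(\Omega)$ converges in $\mathcal X(\Omega)$, and likewise for $\mathcal Z(\Omega)$.

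For $\mathcal X(\Omega)$: first I would take a sequence $(f_k)$ that is Cauchy with respect to all the seminorms $p_M$. Since $p_1(f) \ge \|f\|_{L^1(\Omega)}$, the sequence is Cauchy in $L^1(\Omega)$, hence converges to some $f \in L^1(\Omega)$. The main work is to upgrade this to convergence in $\mathcal X(\Omega)$, which requires showing (a) $f \in \mathcal X(\Omega)$, i.e. $f \in \mathcal D(H_N)$ with $H_N^M f \in L^1(\Omega)\cap\mathcal D(H_N)$ for all $M$, and the Littlewood--Paley pieces decay, and (b) $p_M(f_k - f) \to 0$ for each $M$. The key technical input is Proposition \ref{cor:Lp} (more precisely Lemma \ref{prop:Lp}): the operators $\phi_j(\sqrt{H_N})$ are bounded on $L^1(\Omega)$ uniformly, so $L^1$-convergence $f_k \to f$ already gives $\phi_j(\sqrt{H_N}) f_k \to \phi_j(\sqrt{H_N}) f$ in $L^1(\Omega)$ for each fixed $j$. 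Combining this with the uniform-in-$k$ bound $\sup_j 2^{Mj}\|\phi_j(\sqrt{H_N}) f_k\|_{L^1} \le \sup_k p_M(f_k) < \infty$ and a standard diagonal/Fatou argument over $j$, one gets $\sup_j 2^{Mj}\|\phi_j(\sqrt{H_N}) f\|_{L^1(\Omega)} < \infty$ for every $M$, hence $p_M(f) < \infty$; then a routine $\varepsilon/3$ argument (fixed-$j$ convergence for finitely many low $j$, the tail controlled by the uniform decay estimate) yields $p_M(f_k - f) \to 0$. One should also check $f \in \mathcal D(H_N)$ and that $H_N^M f$ has the required properties; this follows because $H_N^M f$ can be recovered from the $\phi_j(\sqrt{H_N})$-pieces (the partition of unity reconstructs $Pf$, and $f - Pf \in \mathcal E$ is constant and killed by $H_N$), together with the membership in $\mathcal D(H_N)$ being detected by the seminorm structure, or alternatively by appealing to the reproducing/characterization statements of \S\ref{sec:5} (e.g. Proposition \ref{prop:B}) and the closedness of $H_N$.

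For $\mathcal Z(\Omega)$: the argument is the same, using the seminorms $q_M$ and the second (equivalent) description $\mathcal Z(\Omega) = \{f \in \mathcal X(\Omega)\cap\mathcal E^\perp : \sup_{j\le 0} 2^{M|j|}\|\phi_j(\sqrt{H_N}) f\|_{L^1} < \infty\}$. Since $q_M \ge p_M$ on $\mathcal X(\Omega)$, a $q$-Cauchy sequence is a $p$-Cauchy sequence, so by the first part it has a limit $f \in \mathcal X(\Omega)$ in the topology of $\mathcal X(\Omega)$; it remains to see $f \in \mathcal Z(\Omega)$ and $q_M(f_k - f) \to 0$. The condition $\mathcal E^\perp$ (i.e. $\int_\Omega f\,dx = 0$, equivalently the $\mathcal E$-component $f_0 = 0$) passes to the limit because $f \mapsto \int_\Omega f\,dx$ is continuous on $L^1(\Omega)$; and the low-frequency decay $\sup_{j\le 0} 2^{M|j|}\|\phi_j(\sqrt{H_N}) f\|_{L^1}$ is handled exactly as the high-frequency decay above, via fixed-$j$ $L^1$-convergence plus uniform-in-$k$ bounds plus Fatou, followed by the $\varepsilon/3$ splitting.

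\textbf{Main obstacle.} The delicate point is not the "soft" functional analysis but verifying that the candidate limit $f$ genuinely lies in $\mathcal X(\Omega)$ resp. $\mathcal Z(\Omega)$ — in particular that $f \in \mathcal D(H_N)$ and $H_N^M f$ inherits all the required integrability and domain properties — since $L^1$-convergence alone does not see the operator $H_N$; this is where one must lean on the uniform $L^1$-boundedness of the Littlewood--Paley projections from Lemma \ref{prop:Lp}, the closedness of $H_N$, and the reconstruction of $f$ (modulo constants) from its frequency pieces, to transfer the uniform bounds on $(f_k)$ to $f$ and close the loop. I expect this verification, together with the bookkeeping of the two-sided index range for $\mathcal Z(\Omega)$, to be the part requiring genuine care.
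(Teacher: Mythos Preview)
Your proposal is correct and follows essentially the same route as the paper. The paper's proof is very terse (it delegates the $\mathcal X(\Omega)$ case and the unbounded-$\Omega$ case of $\mathcal Z(\Omega)$ to Lemma~4.2 of \cite{IMT-Besov}, and only spells out the bounded case of $\mathcal Z(\Omega)$), but your sketch is precisely what that reference contains: $L^1$-limit first, uniform $L^1$-boundedness of $\phi_j(\sqrt{H_N})$ from Lemma~\ref{prop:Lp} to pass fixed-$j$ limits, a Fatou/diagonal argument for the supremum, and an $\varepsilon/3$ splitting for convergence in the seminorms. The only cosmetic difference is that for the $\mathcal E^\perp$-condition the paper invokes closedness of $\mathcal E^\perp$ in $L^2(\Omega)$ together with $L^2$-convergence of the sequence, whereas you use continuity of $f\mapsto\int_\Omega f$ on $L^1(\Omega)$; for bounded $\Omega$ these are equivalent, and for unbounded $\Omega$ the condition is vacuous since $\mathcal E=\{0\}$. (Your pointer to Proposition~\ref{prop:B} for the reconstruction step should presumably be to Proposition~\ref{prop:appro}.)
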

\begin{proof}
We can prove the completeness of $\mathcal{X}(\Omega)$ in a similar way 
as in Lemma 4.2 from \cite{IMT-Besov}, regardless of unboundedness or 
boundedness of $\Omega$. Also, when 
$\Omega$ is unbounded, the proof of completeness of $\mathcal Z(\Omega)$
is similar to that lemma. So we omit the details in these cases. Based on 
this consideration, we prove the completeness of $\mathcal Z(\Omega)$ in the case when 
$\Omega$ is the bounded domain.

Let $\{f_m\}_{m=1}^\infty$ be a Cauchy sequence in $\mathcal Z(\Omega)$. Since $\mathcal Z(\Omega)$ is a subspace of $\mathcal X(\Omega)$, and since 
$\mathcal X(\Omega)$ is complete, $\{f_m\}_{m=1}^\infty$ is also a Cauchy sequence in $\mathcal X(\Omega)$, and hence, there exists an element $f\in\mathcal X(\Omega)$ such that $f_m$ converges to $f$ in $\mathcal X(\Omega)$ as $m\to \infty$. 
Then we can check that $f$ satisfies 
\begin{equation*}\label{EQ:low}
\sup_{j \leq 0} 2^{ M |j|} 
\big\| \phi_j (\sqrt{H_N}) f\big \|_{L^1(\Omega)}< \infty 
\text{ for any } M \in \mathbb N
\end{equation*}
in the same way as in the latter part of proof of Lemma 4.2 in \cite{IMT-Besov}. 
Furthermore, 
since $\mathcal E^{\perp}$ is a closed subspace of $L^2(\Omega)$ and $f_m$ converges to $f$ in $L^2(\Omega)$ as $m\to \infty$, we have $f \in \mathcal E^{\perp}$. 
Hence $f \in \mathcal Z(\Omega)$. Thus we conclude that $\mathcal Z(\Omega)$ is complete. The proof of Proposition \ref{prop:Fre} is finished.
\end{proof}

The following propositions are proved in the completely same arguments as 
Lemmas 4.3 and 4.4 in \cite{IMT-Besov}, respectively. So we may omit the proofs. 
 
\begin{prop}\label{prop:4.2}
\begin{itemize}
\item[(i)] For any $f\in\mathcal X'(\Omega)$, there exist a number $M_0\in\mathbb N$ and a constant $C_f>0$ such that
\[
|{}_{\mathcal{X}'(\Omega)} \langle f , g \rangle _{\mathcal{X}(\Omega)} |
\le C_fp_{M_0}(g)
\]
for any $g\in\mathcal{X}(\Omega)$.
\item[(ii)] For any $f\in\mathcal Z'(\Omega)$, there exist a number $M_1\in\mathbb N$ and a constant $C_f>0$ such that
\[
|{}_{\mathcal{Z}'(\Omega)} \langle f , g \rangle _{\mathcal{Z}(\Omega)} |
\le C_fp_{M_1}(g)
\]
for any $g\in\mathcal{Z}(\Omega)$. 
\end{itemize}
\end{prop}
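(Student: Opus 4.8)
\medskip
\noindent\textbf{Proof proposal.}
Both assertions are concrete instances of a single soft fact: a continuous linear functional on a locally convex space whose topology is generated by a countable \emph{increasing} family of (semi)norms is dominated by one member of that family. The plan is therefore just to set this up for $\mathcal X(\Omega)$ in part~(i) and for $\mathcal Z(\Omega)$ in part~(ii). The only problem-specific input is Proposition~\ref{prop:Fre}, which tells us that the topologies in question are precisely the ones generated by $\{p_M\}_{M\in\mathbb N}$ and $\{q_M\}_{M\in\mathbb N}$ (Fr\'echet, in particular metrizable); beyond this, nothing about $H_N$ is needed.

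First I would record two elementary observations. The generating families are increasing: for $j\in\mathbb N$ one has $2^{Mj}\le 2^{M'j}$ whenever $M\le M'$, hence $p_M(g)\le p_{M'}(g)$ for all $g$; and $2^{M|j|}$ is non-decreasing in $M$ for every $j\in\mathbb Z$, hence $q_M(g)\le q_{M'}(g)$. Consequently every basic neighbourhood of $0$ in $\mathcal X(\Omega)$ — a finite intersection of sets $\{\,g:p_M(g)<\varepsilon\,\}$ — already coincides with $\{\,g:p_{M_0}(g)<\varepsilon\,\}$ for a single index $M_0$, and the same holds in $\mathcal Z(\Omega)$ with the $q_M$'s. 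Moreover each $p_M$ and each $q_M$ is in fact a \emph{norm}, since both contain the summand $\|\cdot\|_{L^1(\Omega)}$; this removes any kernel issue in the rescaling step below.

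For~(i): given $f\in\mathcal X'(\Omega)$, continuity of $f$ at the origin together with the previous paragraph provides $M_0\in\mathbb N$ and $\delta>0$ such that $|{}_{\mathcal X'(\Omega)}\langle f,g\rangle_{\mathcal X(\Omega)}|\le 1$ whenever $p_{M_0}(g)<\delta$. For an arbitrary nonzero $g\in\mathcal X(\Omega)$ I would then apply this to $h:=\dfrac{\delta}{2\,p_{M_0}(g)}\,g$, which satisfies $p_{M_0}(h)=\delta/2<\delta$, and use homogeneity of the duality pairing to conclude $|{}_{\mathcal X'(\Omega)}\langle f,g\rangle_{\mathcal X(\Omega)}|\le \dfrac{2}{\delta}\,p_{M_0}(g)$; the case $g=0$ is trivial. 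Taking $C_f:=2/\delta$ finishes part~(i). Part~(ii) is then obtained by repeating this argument verbatim inside the Fr\'echet space $\mathcal Z(\Omega)$ equipped with its generating seminorms $\{q_M\}_{M\in\mathbb N}$, which are again increasing in $M$; this produces the required $M_1\in\mathbb N$ and $C_f>0$.

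I do not expect a genuine obstacle: the whole argument is general functional analysis and relies only on Proposition~\ref{prop:Fre}. The only points that deserve to be spelled out are the monotonicity of the two seminorm families — which is exactly what turns ``bounded on a basic neighbourhood of $0$'' into ``bounded by a single seminorm'' — and the observation that $p_M$ and $q_M$ are honest norms, which legitimizes the normalization $h=\delta g/(2p_{M_0}(g))$ for every $g\neq 0$.
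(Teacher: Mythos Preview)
Your approach is correct and is precisely the soft functional-analytic argument the paper has in mind; the paper itself gives no proof and simply defers to the analogous lemma in \cite{IMT-Besov}, so there is nothing to compare beyond noting that your write-up makes explicit the monotonicity of the seminorm families and the fact that $p_M,q_M$ are genuine norms.

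There is, however, one discrepancy you should not gloss over. Your argument for part~(ii) yields the bound
\[
\big|{}_{\mathcal Z'(\Omega)}\langle f,g\rangle_{\mathcal Z(\Omega)}\big|\le C_f\,q_{M_1}(g),
\]
whereas the statement as printed asks for $p_{M_1}(g)$ on the right-hand side. Since $p_M\le q_M$ on $\mathcal Z(\Omega)$, the printed inequality is formally stronger than what your rescaling argument delivers. In the bounded-domain case the gap closes at once: by the exponential low-frequency estimate \eqref{EQ:phi-Lp2} one has, for every $j\le 0$,
\[
2^{M|j|}\|\phi_j(\sqrt{H_N})g\|_{L^1(\Omega)}\le C\,2^{M|j|}e^{-\mu 2^{-j}}\|g\|_{L^1(\Omega)}\le C_M\|g\|_{L^1(\Omega)},
\]
so that $q_M(g)\le C_M\,p_M(g)$ on $\mathcal Z(\Omega)$ and the two formulations coincide. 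In the unbounded case no spectral gap is available and a uniform domination $q_M\lesssim p_{M'}$ on $\mathcal Z(\Omega)$ is not evident from the tools at hand; the printed $p_{M_1}$ is most plausibly a slip for $q_{M_1}$, but either way you should state explicitly which seminorm your argument actually controls and, if you want the inequality exactly as written, supply the extra step (or note where it fails).
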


\begin{prop}
\begin{itemize}
\item[(i)] For any $\phi\in C^\infty_0(\mathbb R)$, $\phi(H_N)$ maps continuously from $\mathcal{X}(\Omega)$ into itself, and from $\mathcal{X'}(\Omega)$ into itself.
\item[(ii)] For any $\phi\in C^\infty_0((0,\infty))$, $\phi(H_N)$ maps continuously from $\mathcal{Z}(\Omega)$ into itself, and from $\mathcal{Z'}(\Omega)$ into itself. 
\end{itemize}
\end{prop}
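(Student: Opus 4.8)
The plan is to follow the arguments of Lemmas~4.3 and~4.4 of \cite{IMT-Besov}, the only substitute for the $\mathbb R^n$-input being the $L^1$-boundedness of Schwartz-class spectral multipliers recorded in Lemma~\ref{prop:Lp}. The first step is an elementary support observation. If $\phi\in C^\infty_0(\mathbb R)$, then since $\operatorname{supp}\phi$ is bounded and $\operatorname{supp}\phi_0\subset\{1/2\le\lambda\le2\}$, the function $\lambda\mapsto\phi_j(\sqrt\lambda)\,\phi(\lambda)$ on $[0,\infty)$ is identically zero once $2^{2j-2}$ exceeds $\sup\operatorname{supp}\phi$; if moreover $\phi\in C^\infty_0((0,\infty))$, so that $\operatorname{supp}\phi$ is a compact subset of $(0,\infty)$, it is also identically zero once $2^{2j+2}<\inf\operatorname{supp}\phi$. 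Hence there is a finite set $F\subset\mathbb Z$, depending only on $\operatorname{supp}\phi$, with $\phi_j(\sqrt{H_N})\phi(H_N)=\eta_j(H_N)$ for $j\in F$ and $\phi_j(\sqrt{H_N})\phi(H_N)=0$ for $j\notin F$, where $\eta_j:=\phi_j(\sqrt{\,\cdot\,})\,\phi$ extends to a function in $C^\infty_0(\mathbb R)$ (it vanishes near the origin because $\phi_0(2^{-j}\sqrt\lambda)=0$ for $\lambda$ small). Each of the finitely many operators $\phi(H_N)$ and $\eta_j(H_N)$ is bounded on $L^1(\Omega)$ by Lemma~\ref{prop:Lp}.

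Given this, I would prove~(i) as follows. For $f\in\mathcal X(\Omega)$, Lemma~\ref{prop:Lp} gives $\|\phi(H_N)f\|_{L^1(\Omega)}\le C\|f\|_{L^1(\Omega)}$, while for each $M\in\mathbb N$,
\[
\sup_{j\in\mathbb N}2^{Mj}\bigl\|\phi_j(\sqrt{H_N})\phi(H_N)f\bigr\|_{L^1(\Omega)}
=\max_{j\in F\cap\mathbb N}2^{Mj}\bigl\|\eta_j(H_N)f\bigr\|_{L^1(\Omega)}
\le C_M\|f\|_{L^1(\Omega)};
\]
hence $p_M(\phi(H_N)f)\le C_M\,p_1(f)$ for every $M$, which is precisely the continuity of $\phi(H_N)$ on $\mathcal X(\Omega)$. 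Applying the same estimates to $\lambda^k\phi(\lambda)\in C^\infty_0(\mathbb R)$ shows $H_N^k\phi(H_N)f\in L^1(\Omega)$ for all $k$, and the bounded Borel functional calculus on $L^2(\Omega)$ (recall $\mathcal X(\Omega)\subset L^2(\Omega)$) gives $\phi(H_N)f\in\bigcap_k\mathcal D(H_N^k)$; thus $\phi(H_N)f\in\mathcal X(\Omega)$. For the action on $\mathcal X'(\Omega)$: by splitting $\phi$ into real and imaginary parts we may assume $\phi$ real-valued, and then the dual operator $\phi(H_N)$ on $\mathcal X'(\Omega)$ defined in \S\ref{sec:2.1} is by construction the transpose of the continuous map $\phi(H_N)\colon\mathcal X(\Omega)\to\mathcal X(\Omega)$, hence continuous.

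For~(ii), let $\phi\in C^\infty_0((0,\infty))$ and $f\in\mathcal Z(\Omega)$. Since $0\notin\operatorname{supp}\phi$, $\phi(H_N)=P\phi(H_N)$ maps $L^2(\Omega)$ into $\mathcal E^\perp$, so $\phi(H_N)f\in\mathcal E^\perp$ and the $|f_0|$-term of $q_M$ vanishes on $\phi(H_N)f$. Using the support observation — this time $F$ is finite on \emph{both} ends — together with the characterization of $\mathcal Z(\Omega)$ from \S\ref{sec:2.1} and Lemma~\ref{prop:Lp}, one obtains $\|\phi(H_N)f\|_{L^1(\Omega)}\le C\|f\|_{L^1(\Omega)}$ and
\[
\sup_{j\in\mathbb Z}2^{M|j|}\bigl\|\phi_j(\sqrt{H_N})\phi(H_N)f\bigr\|_{L^1(\Omega)}
=\max_{j\in F}2^{M|j|}\bigl\|\eta_j(H_N)f\bigr\|_{L^1(\Omega)}
\le C_M\|f\|_{L^1(\Omega)},
\]
so $q_M(\phi(H_N)f)\le C_M\,q_1(f)$ for every $M$, and in particular $\phi(H_N)f\in\mathcal Z(\Omega)$. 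Continuity on $\mathcal Z'(\Omega)$ follows again by transposition. I do not expect a serious obstacle here: the substance is already contained in Lemma~\ref{prop:Lp}, and the only point requiring care is the finiteness of $F$ — in particular the fact that for $\phi\in C^\infty_0((0,\infty))$ the set $F$ is finite in the low-frequency direction as well, which is exactly what forces $\phi(H_N)$ to respect $\mathcal Z(\Omega)$, the analogue of $\mathcal S_0(\mathbb R^n)$, and explains the hypothesis $0\notin\operatorname{supp}\phi$ in~(ii).
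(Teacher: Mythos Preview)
Your argument is correct and aligns with the paper's approach: the paper omits the proof entirely, stating that it is identical to that of Lemma~4.4 in \cite{IMT-Besov}, and your support-reduction plus Lemma~\ref{prop:Lp} is exactly that argument adapted to the Neumann setting. The only cosmetic point is that Lemma~4.3 of \cite{IMT-Besov} corresponds to Proposition~\ref{prop:4.2} rather than to this proposition, so you need only cite Lemma~4.4.
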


Next we introduce approximations of identity in $\mathcal{X}(\Omega)$ and $\mathcal{Z}(\Omega)$. More precisely, we have the following. 

\begin{prop}\label{prop:appro}
\begin{itemize}
\item[(i)] For any $f\in\mathcal{X}(\Omega)$, 
\begin{equation}\label{EQ:X-iden}
f = \psi(H_N)f +\sum_{j\in\mathbb N}\phi_j(\sqrt{H_N})f 
\quad \text{in }\mathcal{X}(\Omega).
\end{equation}
Furthermore, for any $f\in\mathcal{X}'(\Omega)$, the identity \eqref{EQ:X-iden} holds in $\mathcal{X}'(\Omega)$, and $\psi(H_N)f$ and $\phi_j(\sqrt{H_N})f$ are regarded as elements of $L^\infty(\Omega)$. 
\item[(ii)] For any $f\in\mathcal{Z}(\Omega)$, 
\begin{equation}\label{EQ:Z-iden}
f = \sum_{j\in\mathbb Z}\phi_j(\sqrt{H_N})f 
\quad \text{in }\mathcal{Z}(\Omega).
\end{equation}
Furthermore, for any $f\in\mathcal{Z}'(\Omega)$, the identity \eqref{EQ:Z-iden} holds in $\mathcal{Z}'(\Omega)$, and $\phi_j(\sqrt{H_N})f$ are regarded as elements of $L^\infty(\Omega)$. \end{itemize}
\end{prop}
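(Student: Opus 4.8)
The plan is to treat the four assertions separately but using a common mechanism: the Littlewood--Paley identities $\psi(H_N)+\sum_{j\in\mathbb N}\phi_j(\sqrt{H_N})=1$ and $\sum_{j\in\mathbb Z}\phi_j(\sqrt{H_N})=1$ (the latter on $\mathcal E^\perp$), combined with the semi-norm estimates $p_M$ and $q_M$ and the $L^p$--$L^q$ bounds from Proposition \ref{cor:Lp}. First I would establish \eqref{EQ:X-iden} in $\mathcal X(\Omega)$. For $f\in\mathcal X(\Omega)$ put $S_N f:=\psi(H_N)f+\sum_{j=1}^N\phi_j(\sqrt{H_N})f$; one has $f-S_Nf=\sum_{j>N}\phi_j(\sqrt{H_N})f$ by the spectral theorem in $L^2$, so it suffices to control $p_M(f-S_Nf)$ for each $M$. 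Using $\phi_k(\sqrt{H_N})\phi_j(\sqrt{H_N})=0$ whenever $|k-j|\ge2$, the semi-norm $p_M$ applied to the tail picks up only finitely many overlapping pieces at each scale, so $p_M(f-S_Nf)\le C\sup_{j>N-1}2^{Mj}\|\phi_j(\sqrt{H_N})f\|_{L^1}$, and this tends to $0$ as $N\to\infty$ because $f\in\mathcal X(\Omega)$ forces $2^{(M+1)j}\|\phi_j(\sqrt{H_N})f\|_{L^1}$ to be bounded in $j$. This is essentially the argument of Lemma 4.5 in \cite{IMT-Besov}, adapted to $H_N$.

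Next I would pass to $\mathcal X'(\Omega)$. For $f\in\mathcal X'(\Omega)$ and $g\in\mathcal X(\Omega)$, the duality pairing gives ${}_{\mathcal X'}\langle f-S_Nf,g\rangle_{\mathcal X}={}_{\mathcal X'}\langle f,g-S_Ng\rangle_{\mathcal X}$ by the definition of the dual operators, and by Proposition \ref{prop:4.2}(i) this is bounded by $C_f\,p_{M_0}(g-S_Ng)\to0$ using the convergence just proved in $\mathcal X(\Omega)$; hence \eqref{EQ:X-iden} holds weakly-$*$ in $\mathcal X'(\Omega)$. That $\psi(H_N)f$ and $\phi_j(\sqrt{H_N})f$ lie in $L^\infty(\Omega)$ follows by testing against $g\in\mathcal X(\Omega)\subset L^1(\Omega)$: the estimate $|{}_{\mathcal X'}\langle\phi_j(\sqrt{H_N})f,g\rangle_{\mathcal X}|=|{}_{\mathcal X'}\langle f,\phi_j(\sqrt{H_N})g\rangle_{\mathcal X}|\le C_f\,p_{M_0}(\phi_j(\sqrt{H_N})g)$ together with the boundedness of $\phi_j(\sqrt{H_N})$ (and of the products $\phi_k(\sqrt{H_N})\phi_j(\sqrt{H_N})$) from $L^1$ to $L^1$, given by \eqref{EQ:phi-Lp}, bounds $|{}_{\mathcal X'}\langle\phi_j(\sqrt{H_N})f,g\rangle_{\mathcal X}|$ by $C\|g\|_{L^1(\Omega)}$, so $\phi_j(\sqrt{H_N})f$ defines a bounded functional on $L^1(\Omega)$, i.e.\ an $L^\infty$ function; the same works for $\psi(H_N)f$ via \eqref{EQ:psi-Lp}.

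The homogeneous assertion (ii) is handled identically, replacing $\mathcal X$ by $\mathcal Z$, $p_M$ by $q_M$, $\psi(H_N)f+\sum_{j=1}^N$ by $\sum_{|j|\le N}$, and invoking Proposition \ref{prop:4.2}(ii) for the dual estimate; here both the high-frequency tail $\sum_{j>N}$ and the low-frequency tail $\sum_{j<-N}$ must be shown to vanish, the former as before and the latter because membership in $\mathcal Z(\Omega)$ forces $2^{M|j|}\|\phi_j(\sqrt{H_N})f\|_{L^1}$ bounded for $j\le0$ (note that on $\mathcal Z(\Omega)\subset\mathcal E^\perp$ the constant component $f_0$ vanishes, so the full sum over $\mathbb Z$ really does reconstruct $f$). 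I expect the main obstacle to be the bounded-domain low-frequency behaviour in (ii): one must verify that the partial sums $\sum_{|j|\le N}\phi_j(\sqrt{H_N})f$ stay inside $\mathcal Z(\Omega)$ — in particular inside $\mathcal E^\perp$ — and converge there, which uses that $\phi_j(\sqrt{H_N})$ annihilates $\mathcal E$ for every $j$ (since $\operatorname{supp}\phi_j$ avoids the origin) and that, for bounded $\Omega$, only finitely many scales $j$ see the discrete spectrum below any fixed level, so the low-frequency tail is eventually zero rather than merely small. Once this is in place the $L^\infty$-regularity of the frequency pieces in $\mathcal Z'(\Omega)$ follows exactly as in the inhomogeneous case, using \eqref{EQ:phi-Lp} and \eqref{EQ:phi-Lp2}.
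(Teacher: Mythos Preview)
Your proposal is correct and tracks the paper's approach (which in turn follows Lemma~4.5 of \cite{IMT-Besov}) almost exactly: the tail estimate via almost-orthogonality of the $\phi_j$, the duality transfer to $\mathcal X'(\Omega)$ and $\mathcal Z'(\Omega)$ through Proposition~\ref{prop:4.2}, and the $L^\infty$-identification of the frequency pieces via the bound $|\langle\phi_j(\sqrt{H_N})f,g\rangle|\le C\|g\|_{L^1}$ together with Hahn--Banach are all what the paper does.

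The one genuine difference is in the low-frequency part of (ii) for bounded $\Omega$. The paper does not use your spectral-gap observation that $\phi_j(\sqrt{H_N})=0$ once $2^{2(j+1)}<\lambda_2$; instead it shows absolute convergence of $\sum_{j\le 0}\phi_j(\sqrt{H_N})f$ in $\mathcal Z(\Omega)$ via the estimate
\[
q_M\big(\phi_j(\sqrt{H_N})f\big)\le C\,2^{2j}\,q_M\big(H_N^{-1}\phi_j(\sqrt{H_N})f\big)\le C\,2^{2j}\,q_{M+2}(f),
\]
obtained from \eqref{EQ:phi-Lp}, and then combines this with the $L^2$-identity $f=\sum_{j\in\mathbb Z}\phi_j(\sqrt{H_N})f$ (valid since $f\in\mathcal E^\perp$) to identify the limit. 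Your argument is shorter and more transparent in the bounded case, since the tail is literally zero rather than merely summable; the paper's argument has the advantage of not appealing to the spectral gap and is closer in form to the unbounded case.
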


\begin{proof}
We prove the assertion (ii) in the case when $\Omega$ is the bounded domain, 
since the unbounded case are proved in the same way as in Lemma 4.5 in \cite{IMT-Besov}. 
Let $f \in \mathcal Z(\Omega)$. Since $\mathcal Z(\Omega)\subset \mathcal E^{\perp}$,
it follows that $f\in \mathcal E^{\perp}$, and hence,  we have
\begin{equation}\label{EQ:L2-conv}
f = 
\sum_{j\in\mathbb Z} \phi_j(\sqrt{H_N})f
\quad \text{in }L^2(\Omega).
\end{equation}
On the other hand, we find from the estimates \eqref{EQ:phi-Lp} for $p=q=1$ 
in Proposition \ref{cor:Lp} that 
\[
q_M(\phi_j(\sqrt{H_N})f)
\le C2^{2j} q_M(H_N^{-1}\phi_j(\sqrt{H_N})f)
\le C2^{2j} q_{M+2}(f),
\]
which implies that
\[
\sum_{j\le0}q_M(\phi_j(\sqrt{H_N})f)
\le 
C q_{M+2}(f) \sum_{j\le0}2^{2j} < \infty
\]
for any $M\in\mathbb N$. This means that the series in the right member of \eqref{EQ:L2-conv} converges absolutely in $\mathcal Z(\Omega)$. 
Thus \eqref{EQ:Z-iden} is proved. The latter part is proved by combining the Hahn-Banach theorem with 
\begin{equation*}\label{EQ:ccc}
\left|{}_{\mathcal Z'(\Omega)}\langle \phi_j(\sqrt{H_N})f, h\rangle_{\mathcal Z(\Omega)} \right|
\le C \|h\|_{L^1(\Omega)}
\end{equation*}
for any $f\in\mathcal Z'(\Omega)$ and $h\in\mathcal Z(\Omega)$. 
For more details, see the proof of Lemma 4.5 in \cite{IMT-Besov}. 

Similarly, the assertion (i) is proved by using the estimate \eqref{EQ:psi-Lp} instead of \eqref{EQ:phi-Lp}. 
The proof of Proposition \ref{prop:appro} is finished.
\end{proof}

The following result states the relations among Lebesgue spaces and the spaces of test functions 
and distributions on $\Omega$. 
\begin{prop}\label{prop:embedding}
Let $1\le p \le \infty$. Then 
\begin{equation}\label{EQ:emb_1}
\mathcal Z(\Omega)\subset\mathcal X(\Omega)\subset L^p(\Omega)
\quad \text{and}\quad 
L^p(\Omega)\hookrightarrow\mathcal X'(\Omega)\hookrightarrow\mathcal Z'(\Omega).
\end{equation} 
Furthermore, we have
\begin{equation}\label{EQ:emb_2}
\mathcal Z(\Omega)\subset\mathcal X(\Omega)\subset C^\infty(\Omega).
\end{equation}
\end{prop}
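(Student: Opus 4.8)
The plan is to establish the chain \eqref{EQ:emb_1} in three pieces: the inclusion $\mathcal Z(\Omega)\subset\mathcal X(\Omega)$, the inclusions $\mathcal X(\Omega)\subset L^p(\Omega)$ for all $1\le p\le\infty$, and finally the dual embeddings $L^p(\Omega)\hookrightarrow\mathcal X'(\Omega)\hookrightarrow\mathcal Z'(\Omega)$ obtained by transposing. The inclusion $\mathcal Z(\Omega)\subset\mathcal X(\Omega)$ is immediate from the definition, since $\mathcal Z(\Omega)$ was defined as a subset of $\mathcal X(\Omega)$ cut out by the finiteness of the extra semi-norms $q_M$, and the topology of $\mathcal Z(\Omega)$ is finer than that induced from $\mathcal X(\Omega)$ (each $p_M$ is dominated by $q_M$), so the inclusion is continuous.

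For $\mathcal X(\Omega)\subset L^p(\Omega)$ I would use the atomic-type decomposition from Proposition \ref{prop:appro}(i): for $f\in\mathcal X(\Omega)$ we have $f=\psi(H_N)f+\sum_{j\in\mathbb N}\phi_j(\sqrt{H_N})f$ in $\mathcal X(\Omega)$, hence in $L^1(\Omega)$. It suffices to bound the $L^p$-norm of each piece and sum. By the $L^p$-$L^q$ estimates of Proposition \ref{cor:Lp} with $q=p$, $\|\psi(H_N)f\|_{L^p}\le C\|f\|_{L^1}$ and $\|\phi_j(\sqrt{H_N})f\|_{L^p}\le C\|\phi_j(\sqrt{H_N})f\|_{L^1}$ if $\Omega$ is unbounded (using \eqref{EQ:phi-Lp} with $\alpha=0$ and the $L^1\to L^p$ boundedness), while $\|\phi_j(\sqrt{H_N})f\|_{L^1}\le 2^{-Mj}p_M(f)$ from the definition of $p_M$; choosing $M$ large makes the series converge. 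If $\Omega$ is bounded one argues the same way, invoking \eqref{EQ:phi-Lp2} for $j\le 0$ which carries the extra decay $e^{-\mu2^{-j}}$, plus the bound $\|\phi_j(\sqrt{H_N})f\|_{L^1}\le 2^{-M|j|}p_M(f)$ for $j\ge1$, so again the sum over $j\in\mathbb N$ converges absolutely in $L^p$. This shows $f\in L^p(\Omega)$ with a continuous inclusion. The dual statements $L^p(\Omega)\hookrightarrow\mathcal X'(\Omega)\hookrightarrow\mathcal Z'(\Omega)$ then follow by transposition: an element $g\in L^p(\Omega)$ acts on $h\in\mathcal X(\Omega)$ by $\int_\Omega g\overline h\,dx$, which is well-defined and continuous because $h\in L^{p'}(\Omega)$ with norm controlled by some $p_M(h)$ by what was just proved; and the restriction map $\mathcal X'(\Omega)\to\mathcal Z'(\Omega)$ is continuous because $\mathcal Z(\Omega)\hookrightarrow\mathcal X(\Omega)$ continuously.

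For \eqref{EQ:emb_2}, that $\mathcal X(\Omega)\subset C^\infty(\Omega)$, I would again use $f=\psi(H_N)f+\sum_{j\in\mathbb N}\phi_j(\sqrt{H_N})f$ and show the series converges in $C^k(\Omega')$ for every $k$ and every relatively compact $\Omega'$. Each summand lies in $\mathcal D(H_N^M)\subset H^{2M}_{\mathrm{loc}}$ for all $M$ by elliptic regularity (interior regularity for $-\Delta$, which does not see the boundary), and Sobolev embedding gives $C^k$ bounds; the spectral localization plus Proposition \ref{cor:Lp}, together with the rapid decay $2^{-Mj}$ of $\|\phi_j(\sqrt{H_N})f\|_{L^1}$ encoded in $p_M(f)$, gives a geometric-series bound on the $C^k(\Omega')$-norms of the tail. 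Since $\mathcal Z(\Omega)\subset\mathcal X(\Omega)$ this also yields $\mathcal Z(\Omega)\subset C^\infty(\Omega)$.

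The main obstacle, as usual in this spectral-multiplier framework, is bookkeeping the bounded-domain case: the zero eigenvalue forces one to separate the low-frequency modes $j\le 0$, where the naive scaling estimate $2^{n(\frac1p-\frac1q)j}$ blows up, and to rely instead on the exponential gain $e^{-\mu2^{-j}}$ from \eqref{EQ:phi-Lp2}; one must check this gain genuinely dominates and that the relevant pieces $\phi_j(\sqrt{H_N})f$ are handled on the projected space. Apart from that, everything reduces to Proposition \ref{cor:Lp}, Proposition \ref{prop:appro}, and interior elliptic regularity, so the argument parallels Lemmas/Propositions in \cite{IMT-Besov}.
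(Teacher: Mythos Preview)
Your argument is correct and, for \eqref{EQ:emb_1}, matches what the paper does (it simply defers to Lemma~4.6 of \cite{IMT-Besov}, which proceeds via the dyadic decomposition and Proposition~\ref{cor:Lp}). Two cosmetic remarks: the bound you write as $\|\phi_j(\sqrt{H_N})f\|_{L^p}\le C\|\phi_j(\sqrt{H_N})f\|_{L^1}$ should carry the factor $2^{n(1-1/p)j}$ from \eqref{EQ:phi-Lp}, which your ``choose $M$ large'' then absorbs; and in the bounded case your discussion of $j\le0$ and the gain $e^{-\mu 2^{-j}}$ is a red herring, since the decomposition in Proposition~\ref{prop:appro}(i) runs only over $j\in\mathbb N$, with the low spectrum already packaged into $\psi(H_N)f$.

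For \eqref{EQ:emb_2} the paper takes a shorter route. Rather than decompose $f$ and prove convergence of the series in $C^k(\Omega')$, it observes directly that $f\in\mathcal X(\Omega)$ forces $H_N^M f\in L^2(\Omega)$ for every $M$, whence the equivalence $\|f\|_{L^2}+\|H_N^M f\|_{L^2}\simeq\|f\|_{H^{2M}(\Omega)}$ gives $f\in H^{2M}(\Omega)$ for all $M$, and Sobolev embedding yields $f\in C^\infty(\Omega)$ in one stroke. Your series argument reaches the same conclusion but re-derives this regularity piecewise; the paper's approach exploits that the smoothness is already encoded in the very definition of $\mathcal X(\Omega)$ and needs no tail estimate at all.
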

\begin{proof}
For the proof of \eqref{EQ:emb_1}, see Lemma 4.6 in \cite{IMT-Besov}. 
The inclusion \eqref{EQ:emb_2} is an immediate consequence of the Sobolev embedding theorem and 
\[
\|f\|_{L^2(\Omega)}+\|H_N^Mf\|_{L^2(\Omega)} \simeq \|f\|_{H^{2M}(\Omega)}
\]
for any $M\in\mathbb N$ and $f \in \mathcal X(\Omega)$. The proof of Proposition 
\ref{prop:embedding} is complete.
\end{proof}

In the rest of this section we shall characterize the space $\mathcal Z'(\Omega)$ by the quotient space of $\mathcal X'(\Omega)$. Let us recall that $\mathcal X'(\Omega)$ and $\mathcal Z'(\Omega)$ correspond to $\mathcal S'(\mathbb R^n)$ and $\mathcal S'_0(\mathbb R^n)$, respectively. It is well known that $\mathcal S'_0(\mathbb R^n)$ is characterized by the quotient space of $\mathcal S'(\mathbb R^n)$ modulo polynomials, i.e., 
\begin{equation*}\label{EQ:homeo-c}
\mathcal S'_0(\mathbb R^n) \cong \mathcal S'(\mathbb R^n)/\mathcal P,
\end{equation*}
where $\mathcal P$ is the set of all polynomials of $n$ real variables (see, e.g., Proposition 1.1.3 in Grafakos \cite{Grafakos_2014}). 
Thus, let us define a space $\mathcal P(\Omega)$ by
\begin{equation}\label{EQ:Pdef}
\mathcal P(\Omega)
:=
\left\{
f\in \mathcal{X}'(\Omega):
{}_{\mathcal{Z}'(\Omega)} \langle J(f), g \rangle_{\mathcal{Z}(\Omega)}
=
0\text{ for any $g\in \mathcal Z(\Omega)$}
\right\},
\end{equation}
where $J(f)$ is the restriction of $f$ on the 
subspace $\mathcal Z(\Omega)$ of 
$\mathcal X(\Omega)$. 
It is readily checked that $\mathcal P(\Omega)$ is a closed subspace of $\mathcal X'(\Omega)$, and hence, the quotient space $\mathcal X'(\Omega)/\mathcal P(\Omega)$ is a linear topological space endowed with the quotient topology.\\

We have the following. 

\begin{prop}\label{prop:homeo}
Let $\mathcal P(\Omega)$ be as in \eqref{EQ:Pdef}. Then
\[
\mathcal{Z}'(\Omega) \cong \mathcal X'(\Omega)/\mathcal P(\Omega).
\]
\end{prop}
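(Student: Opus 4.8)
The plan is to realise the isomorphism as the map induced on the quotient by the restriction map $\mathcal X'(\Omega)\to\mathcal Z'(\Omega)$, and to reduce everything to two points: that this restriction is onto (a Hahn--Banach argument) and that the bijection it induces has continuous inverse (an open-mapping step).

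First I would set up the map $J\colon\mathcal X'(\Omega)\to\mathcal Z'(\Omega)$, $f\mapsto J(f)$, where $J(f)$ is the restriction of $f$ to the subspace $\mathcal Z(\Omega)\subset\mathcal X(\Omega)$; this is well defined by Proposition \ref{prop:embedding}, and $J$ is linear and continuous, being the transpose of the continuous inclusion $\mathcal Z(\Omega)\hookrightarrow\mathcal X(\Omega)$. By the very definition \eqref{EQ:Pdef} of $\mathcal P(\Omega)$ one has $\ker J=\mathcal P(\Omega)$, so $J$ factors as $J=\widetilde J\circ\pi$, where $\pi\colon\mathcal X'(\Omega)\to\mathcal X'(\Omega)/\mathcal P(\Omega)$ is the canonical (continuous, open) quotient map and $\widetilde J\colon\mathcal X'(\Omega)/\mathcal P(\Omega)\to\mathcal Z'(\Omega)$ is a continuous linear injection. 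It remains to show that $\widetilde J$ is surjective and open.

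For surjectivity, fix $g\in\mathcal Z'(\Omega)$. By Proposition \ref{prop:4.2}(ii) there are $M_1\in\mathbb N$ and $C_g>0$ with
\[
\big|{}_{\mathcal Z'(\Omega)}\langle g,h\rangle_{\mathcal Z(\Omega)}\big|\le C_g\,p_{M_1}(h)\qquad\text{for all } h\in\mathcal Z(\Omega).
\]
Since $p_{M_1}$ is one of the defining continuous semi-norms of the locally convex space $\mathcal X(\Omega)$, the Hahn--Banach theorem yields $f\in\mathcal X'(\Omega)$ extending $g$ and still dominated by $C_g\,p_{M_1}$ on all of $\mathcal X(\Omega)$; in particular $J(f)=g$. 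Hence $J$, and therefore $\widetilde J$, is onto, so $\widetilde J$ is a continuous linear bijection.

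Finally I would show that $\widetilde J$ is open, equivalently that $\widetilde J^{-1}$ is continuous; since $\pi$ is open this amounts to checking that $J$ sends neighbourhoods of $0$ in $\mathcal X'(\Omega)$ onto neighbourhoods of $0$ in $\mathcal Z'(\Omega)$. This is the exact analogue, for the Neumann Laplacian, of the classical homeomorphism $\mathcal S'(\mathbb R^n)/\mathcal P\cong\mathcal S'_0(\mathbb R^n)$, and I would carry it out as the corresponding statement in \cite{IMT-Besov}: starting from a basic neighbourhood of $0$ in $\mathcal X'(\Omega)$ expressed through the $p_M$-semi-norms, one uses once more the bound of Proposition \ref{prop:4.2}(ii) to see that the semi-norms governing the quotient topology on $\mathcal X'(\Omega)/\mathcal P(\Omega)$ are comparable with those governing $\mathcal Z'(\Omega)$, together with the freedom to modify a Hahn--Banach extension by an arbitrary element of $\mathcal P(\Omega)$ (which by definition restricts to $0$ on $\mathcal Z(\Omega)$) so as to dispose of the remaining directions. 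This matching of the two topologies is the main obstacle; the reduction above and the surjectivity are comparatively routine.
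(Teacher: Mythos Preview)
Your overall route matches the paper's: both use the restriction $J\colon\mathcal X'(\Omega)\to\mathcal Z'(\Omega)$ (the transpose of the inclusion $\mathcal Z(\Omega)\hookrightarrow\mathcal X(\Omega)$), identify $\ker J=\mathcal P(\Omega)$ from the definition~\eqref{EQ:Pdef}, and pass to the quotient. The difference is that the paper does not argue the topological part by hand; it simply invokes the general duality theorem for closed subspaces of Fr\'echet spaces (Schaefer~\cite{Sch_1971}, p.~126; Tr\`eves~\cite{Tre_1967}, Propositions~35.5 and~35.6; see also Sawano~\cite{Saw-2016} and \S3.4 of~\cite{IMT-bilinear}), which gives $M'\cong E'/M^{\circ}$ as topological vector spaces whenever $M$ is a closed subspace of a Fr\'echet space $E$. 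Here $E=\mathcal X(\Omega)$ is Fr\'echet by Proposition~\ref{prop:Fre}, $M=\mathcal Z(\Omega)$ is closed in $\mathcal X(\Omega)$ (this is contained in the proof of Proposition~\ref{prop:Fre}), and $M^{\circ}=\mathcal P(\Omega)$ by definition.

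Your surjectivity step, via Hahn--Banach combined with Proposition~\ref{prop:4.2}(ii), is correct and is exactly the algebraic half of the cited theorems. The gap is the openness of $\widetilde J$. What you write there is not a proof, and one formulation is off: ``a basic neighbourhood of $0$ in $\mathcal X'(\Omega)$ expressed through the $p_M$-semi-norms'' does not make sense, since the $p_M$ are semi-norms on $\mathcal X(\Omega)$, not on its dual; neighbourhoods of $0$ in $\mathcal X'(\Omega)$ (weak-$*$ or strong) are described through finite subsets, respectively bounded subsets, of $\mathcal X(\Omega)$. Proving continuity of $\widetilde J^{-1}$ is precisely the nontrivial content of the Tr\`eves/Schaefer result and relies on the Fr\'echet structure of the predual (ultimately an open-mapping argument). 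You should either invoke that theorem, as the paper does, or supply a complete argument; the pointer to \cite{IMT-Besov} is not enough --- the paper itself refers to \S3.4 of \cite{IMT-bilinear} for the details.
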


The proof of Proposition \ref{prop:homeo} is done by using Theorem in p.126 from Schaefer \cite{Sch_1971} and Propositions 35.5 and 35.6 from Tr\'eves \cite{Tre_1967} (see also 
Theorem 1.1 in Sawano \cite{Saw-2016}). For more details, see \S3.4 in \cite{IMT-bilinear}. \\

The space $\mathcal P(\Omega)$ enjoys the following.

\begin{prop}\label{prop:P}
The following assertions hold{\rm :}
\begin{itemize}
\item[(i)] 
Let $f\in\mathcal X'(\Omega)$. Then 
the following assertions are equivalent{\rm :}
\begin{itemize}
\item[(a)] $f \in \mathcal P(\Omega)${\rm ;}
\item[(b)] $\phi_j(\sqrt{H_N})f=0$ in $\mathcal X'(\Omega)$ for any $j\in\mathbb Z${\rm ;}
\item[(c)] $\left\| J(f)\right\|_{\dot B^s_{p,q}(H_N)}=0$ for any $s\in\mathbb R$ and $1\le p,q\le\infty$.
\end{itemize}
\item[(ii)] 
If $\Omega$ is a smooth domain, then  
\begin{equation}\label{EQ:P1}
\mathcal P(\Omega) =  \text{either} \quad 
\{0\} \quad \text{or}\quad
\left\{f=c \text{ on $\Omega$} : 
c\in\mathbb C
\right\}. 
\end{equation}
In addition, if $\Omega$ is a bounded domain, then
\begin{equation}\label{EQ:P2}
\mathcal P(\Omega) = \mathcal E.
\end{equation}
\end{itemize}
\end{prop}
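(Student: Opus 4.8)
The plan is to prove the three equivalences in (i) first, since (ii) will follow from (i) together with the known structure of the kernel of the Littlewood–Paley operators. For the equivalence of (a), (b), (c) I would argue in a cycle. To see (a) $\Rightarrow$ (b): if $f\in\mathcal P(\Omega)$ then by definition ${}_{\mathcal Z'(\Omega)}\langle J(f),g\rangle_{\mathcal Z(\Omega)}=0$ for every $g\in\mathcal Z(\Omega)$; applying this with $g$ replaced by $\phi_j(\sqrt{H_N})h$ for arbitrary $h\in\mathcal X(\Omega)$ (note $\phi_j(\sqrt{H_N})h\in\mathcal Z(\Omega)$ because $\phi_j$ is supported away from the origin, so the zero-mode vanishes and the required decay at $j\le 0$ holds by Proposition \ref{cor:Lp}), and using the self-adjointness of $\phi_j(\sqrt{H_N})$ built into the definition of the dual operator, gives ${}_{\mathcal X'(\Omega)}\langle\phi_j(\sqrt{H_N})f,h\rangle_{\mathcal X(\Omega)}=0$ for all $h$, i.e.\ (b). For (b) $\Rightarrow$ (c): the homogeneous Besov norm of $J(f)$ is built only out of the quantities $\|\phi_j(\sqrt{H_N})f\|_{L^p(\Omega)}$, which all vanish, so the norm is zero. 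For (c) $\Rightarrow$ (a): taking $s,p,q$ so that $\|J(f)\|_{\dot B^s_{p,q}(H_N)}=0$ forces $\phi_j(\sqrt{H_N})f=0$ in $L^\infty(\Omega)$ (these are genuine $L^\infty$ functions by Proposition \ref{prop:appro}(ii)) for every $j$; then for any $g\in\mathcal Z(\Omega)$ we expand $g=\sum_{j\in\mathbb Z}\phi_j(\sqrt{H_N})g$ in $\mathcal Z(\Omega)$ via Proposition \ref{prop:appro}(ii), pair with $J(f)$, move each $\phi_j(\sqrt{H_N})$ onto $f$, and conclude ${}_{\mathcal Z'(\Omega)}\langle J(f),g\rangle_{\mathcal Z(\Omega)}=0$, i.e.\ $f\in\mathcal P(\Omega)$.

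For part (ii), the starting point is characterization (b): $f\in\mathcal P(\Omega)$ iff $\phi_j(\sqrt{H_N})f=0$ for all $j\in\mathbb Z$. Since $\sum_{j\in\mathbb Z}\phi_j(\lambda)=1$ for $\lambda>0$, the operators $\phi_j(\sqrt{H_N})$ reconstruct the part of $f$ living on the positive spectrum, so (b) says precisely that $f$ is "spectrally supported at $0$'', i.e.\ that $Pf=0$ where $P=\chi_{(0,\infty)}(H_N)$ — more carefully, that $f$ annihilates the range of $P$ acting on test functions. When $\mathrm{vol}(\Omega)=\infty$, zero is not an eigenvalue and $P=I$ on $\mathcal X(\Omega)$, forcing $f=0$, so $\mathcal P(\Omega)=\{0\}$; when $\mathrm{vol}(\Omega)<\infty$ the zero eigenspace $\mathcal E$ consists of the constants, and one shows $f$ must act on test functions as integration against a constant, giving the two alternatives in \eqref{EQ:P1}. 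The bounded case \eqref{EQ:P2} is then immediate: the constants are genuine elements of $L^2(\Omega)\subset\mathcal X'(\Omega)$, they satisfy $\phi_j(\sqrt{H_N})c=0$ for all $j$ (as $c\in\mathcal E$ and $\phi_j$ vanishes near $0$), hence $\mathcal E\subseteq\mathcal P(\Omega)$, and conversely any $f\in\mathcal P(\Omega)$ has $Pf=0$, so $f$ equals its own projection onto $\mathcal E$, i.e.\ $f\in\mathcal E$; thus $\mathcal P(\Omega)=\mathcal E$.

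The main obstacle I anticipate is the bookkeeping in the smooth unbounded case of \eqref{EQ:P1}. There, even though zero is not an eigenvalue of $H_N$ on $L^2(\Omega)$, an element $f\in\mathcal X'(\Omega)$ is only a distribution and need not lie in $L^2$, so "$Pf=0$'' cannot literally be read off the spectral theorem; one must argue at the level of the pairing against test functions, showing that vanishing of all $\phi_j(\sqrt{H_N})f$ together with $f=\psi(H_N)f+\sum_{j\in\mathbb N}\phi_j(\sqrt{H_N})f$ in $\mathcal X'(\Omega)$ (Proposition \ref{prop:appro}(i)) reduces $f$ to $\psi(H_N)f$, and then analyzing the low-frequency piece $\psi(H_N)f$ using the Gaussian heat-kernel bounds of Lemma \ref{lem:Gauss}(i) to see that as a distribution it can only be a constant (and only a nonzero constant when $\Omega$ supports constants as tempered-type distributions, which for unbounded $\Omega$ is the content of the "either $\{0\}$ or constants'' dichotomy). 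This is exactly the kind of argument carried out for the Dirichlet case in \cite{IMT-Besov,IMT-bilinear}, so I would follow that template, citing the heat-kernel estimates and the identity approximations already established above, and keep the new input to the adaptation of those steps to the Neumann setting where $\mathcal E\ne\{0\}$.
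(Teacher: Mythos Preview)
Your cycle argument for part (i) is correct and matches what the paper defers to \cite{IMT-bilinear}.

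For part (ii) there is a genuine gap. Your spectral argument --- that (b) means $f$ is ``spectrally supported at $0$'' and hence lies in the zero eigenspace --- works cleanly only once you know $f\in L^2(\Omega)$. In the bounded case you do get this, since $f=\psi(H_N)f\in L^\infty(\Omega)\subset L^2(\Omega)$, and then your projection argument for \eqref{EQ:P2} is fine. But in the unbounded case $f$ is only an element of $\mathcal X'(\Omega)$, and the fix you propose (``analyzing $\psi(H_N)f$ using the Gaussian heat-kernel bounds of Lemma \ref{lem:Gauss}(i)'') does not lead anywhere concrete: pointwise Gaussian bounds on the heat kernel do not by themselves force a bounded function to be constant. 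A further warning sign is that your argument never invokes the smoothness hypothesis on $\Omega$.

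The paper's route is different and more direct. From (i)(b) together with Proposition \ref{prop:appro}(i) one has $f=\psi(2^{-2j}H_N)f$ in $\mathcal X'(\Omega)$ for every $j\in\mathbb Z$, and in particular $f\in L^\infty(\Omega)$. The key step is then the \emph{gradient} estimate \eqref{EQ:psi-gra} of Proposition \ref{cor:gradient} --- this is precisely where the smoothness of $\Omega$ enters, via assumption \eqref{EQ:final} --- which gives
\[
\|\nabla f\|_{L^\infty(\Omega)} = \|\nabla \psi(2^{-2j}H_N)f\|_{L^\infty(\Omega)} \le C\,2^{j}\|f\|_{L^\infty(\Omega)}
\]
for all $j\in\mathbb Z$. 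Letting $j\to-\infty$ yields $\nabla f=0$, so $f$ is constant. This gives $\mathcal P(\Omega)\subset\{\text{constants}\}$ uniformly in the bounded and unbounded cases, and linearity of $\mathcal P(\Omega)$ then forces the dichotomy \eqref{EQ:P1}. The reverse inclusion $\mathcal E\subset\mathcal P(\Omega)$ in the bounded case follows, as you say, from $\mathcal Z(\Omega)\subset\mathcal E^\perp$.
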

\begin{proof}
The proof of the assertion (i) is the same as that of Lemma 3.8 in \cite{IMT-bilinear}. 
Hence it is sufficient to prove the assertion (ii). 

Let $f \in\mathcal P(\Omega)$. We claim that $f\in L^\infty(\Omega)$. 
In fact, by using the identity \eqref{EQ:X-iden} in $\mathcal X'(\Omega)$ from Proposition \ref{prop:appro}, we find from the assertion (i-b) that 
\begin{equation}\label{ddd}
f = \psi(2^{-2j}H_N)f + \sum_{k=j}^\infty \phi_k(\sqrt{H_N})f
=
\psi(2^{-2j}H_N)f \quad \text{in }\mathcal X'(\Omega)
\end{equation}
for any $j\in\mathbb Z$. 
Hence it follows from the latter part of the assertion (i) in Proposition \ref{prop:appro} that 
$f \in L^\infty(\Omega)$. 
Then, thanks to \eqref{ddd}, recalling that $\Omega$ is a smooth domain,
we find from \eqref{EQ:psi-gra} that
\[
\|\nabla f \|_{L^\infty(\Omega)} = \|\nabla \psi(2^{-2j}H_N)f \|_{L^\infty(\Omega)}
\le C2^{j}\|f\|_{L^\infty(\Omega)}
\]
for any $j\in\mathbb Z$, which implies that $\nabla f = 0 $ in $\Omega$. Then $f$ is a constant on $\Omega$. 
Hence we have the inclusion
\begin{equation}\label{EQ:inc}
\{0\}\subset\mathcal P(\Omega) \subset
\left\{f=c \text{ on $\Omega$} : 
c\in\mathbb C
\right\}.
\end{equation}
Since $\mathcal P(\Omega)$ is a linear space, we conclude that 
if $\mathcal P(\Omega) \not =\{0\}$, then $\mathcal P(\Omega)$ is the space of all constant functions on $\Omega$. 
This proves \eqref{EQ:P1}. 

Finally, we consider the case when $\Omega$ is a bounded domain. 
Then it follows from \eqref{EQ:inc} that 
\[
\mathcal P(\Omega) \subset \mathcal E.
\]
To prove the converse, 
since $\mathcal Z(\Omega)\subset \mathcal E^{\perp}$ by the definition of $\mathcal Z(\Omega)$, 
we see from the definition \eqref{EQ:Pdef} of $\mathcal P(\Omega)$ that 
\[
\mathcal E=(\mathcal E^{\perp})^\perp  \subset \mathcal Z(\Omega)^{\perp} \subset \mathcal P(\Omega).
\] 
This proves \eqref{EQ:P2}. 
The proof of Proposition \ref{prop:P} is finished.
\end{proof}

As was seen in Proposition \ref{prop:homeo}, the space
$\mathcal Z^\prime(\Omega)$ is characterized by the quotient space
$\mathcal X^\prime(\Omega)/\mathcal{P}(\Omega)$.
Hence the homogeneous Besov spaces $\dot{B}^s_{p,q}(H_N)$ can be also characterized as subspaces of the quotient space $\mathcal X'(\Omega)/\mathcal P(\Omega)$ by
\[
\dot{B}^s_{p,q}(H_N)
\cong
\left\{[f] \in \mathcal X'(\Omega)/\mathcal P(\Omega): \big\|[f]\big\|_{\dot{B}^s_{p,q}(H_N)}<\infty
\right\},
\]
where $[f]$ is the equivalent class 
of the representative 
$f\in\mathcal X'(\Omega)$, i.e., 
\[
[f]:=
\left\{
g \in \mathcal X'(\Omega): 
f-g \in \mathcal P(\Omega)
\right\}. 
\]
Here, we put
\begin{equation}\label{EQ:norm-q}
\big\|[f]\big\|_{\dot{B}^s_{p,q}(H_N)}:= \|J(f)\|_{\dot{B}^s_{p,q}(H_N)}.
\end{equation}
Then, thanks to the assertion (i) in Proposition \ref{prop:P}, the quantity \eqref{EQ:norm-q} 
is independent of the choice of the representative. It also enjoys the axiom of norm.


\section{Proof of Theorem \ref{prop:iso}}\label{sec:6}

In this section, imposing the assumption on 
$\Omega$ in \S \ref{sec:3}, 
we prove Theorem \ref{prop:iso}. 
For this purpose, 
we need the following.

\begin{lem}\label{lem:5.1}
For any $g\in \mathcal X(\Omega)$, we have 
\begin{equation}\label{EQ:5.1}
\phi_j(\sqrt{H_N})g \in \mathcal Z(\Omega)
\end{equation}
for any $j\in\mathbb Z$.
\end{lem}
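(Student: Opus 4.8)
The goal is to show $\phi_j(\sqrt{H_N})g\in\mathcal Z(\Omega)$ whenever $g\in\mathcal X(\Omega)$. Recall from the definition of $\mathcal Z(\Omega)$ that we must verify two things: first, that $\phi_j(\sqrt{H_N})g$ lies in $\mathcal X(\Omega)\cap\mathcal E^\perp$, and second, that it satisfies the low-frequency decay condition $\sup_{k\le 0}2^{M|k|}\|\phi_k(\sqrt{H_N})\phi_j(\sqrt{H_N})g\|_{L^1(\Omega)}<\infty$ for every $M\in\mathbb N$. The membership $\phi_j(\sqrt{H_N})g\in\mathcal X(\Omega)$ is already available: $\phi_j(\sqrt\cdot{})\in C_0^\infty(\mathbb R)$ (in the variable $\lambda$, $\phi_j(\sqrt\lambda)$ has support in a compact subset of $(0,\infty)$), so by the Proposition stating that $\phi(H_N)$ maps $\mathcal X(\Omega)$ continuously into itself for $\phi\in C_0^\infty(\mathbb R)$, we get $\phi_j(\sqrt{H_N})g\in\mathcal X(\Omega)$.

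First I would dispose of the $\mathcal E^\perp$ membership: since $\operatorname{supp}\phi_j$ is away from the origin, $\phi_j(\sqrt{H_N})=P\phi_j(\sqrt{H_N})$ where $P=\chi_{(0,\infty)}(H_N)$ projects onto $\mathcal E^\perp$ (when $\Omega$ is bounded; when $\Omega$ is unbounded $\mathcal E=\{0\}$ and there is nothing to check). Hence $\phi_j(\sqrt{H_N})g\in\mathcal E^\perp$ automatically. Then for the decay condition, I would use the almost-orthogonality of the Littlewood–Paley pieces: $\phi_k(\sqrt{H_N})\phi_j(\sqrt{H_N})=0$ unless $|k-j|\le 1$ by the support condition \eqref{EQ:phi1}. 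Therefore the supremum over $k\le 0$ in the defining seminorm $q_M$ involves only finitely many nonzero terms (those with $k\in\{j-1,j,j+1\}\cap(-\infty,0]$), each of which is a fixed element of $L^1(\Omega)$ with finite norm because $\phi_k(\sqrt{H_N})\phi_j(\sqrt{H_N})g\in\mathcal X(\Omega)\subset L^1(\Omega)$. So the supremum is trivially finite, and in fact $q_M(\phi_j(\sqrt{H_N})g)<\infty$ for every $M$.

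The one point requiring a little care — and the only real obstacle — is verifying that $\phi_j(\sqrt\cdot{})$, viewed as a spectral multiplier, falls within the class of functions for which the cited mapping property $\phi(H_N):\mathcal X(\Omega)\to\mathcal X(\Omega)$ applies; one should note that $\lambda\mapsto\phi_j(\sqrt\lambda)$ is smooth and compactly supported on $\mathbb R$ (since $\operatorname{supp}\phi_j\subset\{2^{j-1}\le\lambda\le 2^{j+1}\}$ sits strictly inside $(0,\infty)$, the function $\phi_j(\sqrt\lambda)$ extends by zero to a $C_0^\infty(\mathbb R)$ function, indeed a $C_0^\infty((0,\infty))$ function). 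With this observation the argument above is complete: $\phi_j(\sqrt{H_N})g\in\mathcal X(\Omega)\cap\mathcal E^\perp$ and satisfies the low-frequency bound, hence lies in $\mathcal Z(\Omega)$.
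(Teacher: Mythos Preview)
Your proof is correct and follows essentially the same route as the paper: both arguments rest on the almost-orthogonality $\phi_k(\sqrt{H_N})\phi_j(\sqrt{H_N})=0$ for $|k-j|\ge 2$, so that the supremum over $k\le 0$ in the definition of $\mathcal Z(\Omega)$ reduces to at most three terms, each controlled by the $L^1$-boundedness of $\phi_j(\sqrt{H_N})$ (Proposition~\ref{cor:Lp}). You are in fact somewhat more thorough than the paper, which verifies only the low-frequency decay bound explicitly; you also check membership in $\mathcal X(\Omega)$ (via the mapping property of $\phi(H_N)$ for $\phi\in C_0^\infty(\mathbb R)$) and in $\mathcal E^\perp$ (via $\phi_j(\sqrt{H_N})=P\phi_j(\sqrt{H_N})$), both of which the paper leaves implicit.
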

\begin{proof}
Fixing $j\in \mathbb{Z}$, we note that 
\[
\phi_k(\sqrt{\mathcal{H}}) \phi_j(\sqrt{\mathcal{H}})f\ne 0 
\]
only if $k=j-1,j,j+1$. Then 
we deduce from \eqref{EQ:phi-Lp} for $p=q=1$ and $\alpha=0$ in 
Proposition \ref{cor:Lp} that for any 
$M\in \mathbb{N}$,
\begin{equation*}
\begin{split}
\sup_{k \le 0}2^{-Mk}\|\phi_k(\sqrt{H_N})\phi_j(\sqrt{H_N})g\|_{L^1(\Omega)}
\le& 
\max_{k=j-1,j,j+1}C2^{-Mk}\|\phi_j(\sqrt{H_N})g\|_{L^1(\Omega)}\\
\le& 
C2^{-Mj}\|\phi_j(\sqrt{H_N})g\|_{L^1(\Omega)}\\
\le & 
C2^{-Mj}\|g\|_{L^1(\Omega)},
\end{split}
\end{equation*}
which proves \eqref{EQ:5.1}. The proof of Lemma \ref{lem:5.1} is finished. 
\end{proof}

We turn to the proof of Theorem \ref{prop:iso}. 

\begin{proof}[Proof of Theorem {\rm \ref{prop:iso}}]
When $\Omega$ is unbounded, the proof is 
similar to that of Proposition 3.4 in \cite{IMT-Besov}. Hence we may 
omit the details in this case. \\
 
Let us prove the case when $\Omega$ is bounded.
Set
\begin{equation*}
\begin{split}
\dot{X}^s_{p,q}(H_N):=
\left\{
f\in\mathcal X'(\Omega):\|J(f)\|_{\dot{B}^s_{p,q}(H_N)}<\infty,\ 
f=\sum_{j\in\mathbb Z}\phi_j(\sqrt{H_N})f\text{ in }\mathcal X'(\Omega)
\right\}
\end{split},
\end{equation*}
where we recall that 
$J(f)$ is the restriction of $f$ on the 
subspace $\mathcal Z(\Omega)$ of 
$\mathcal X(\Omega)$. The norm of $f\in 
\dot{X}^s_{p,q}(H_N)$ is given by 
$\left\|J(f)\right\|_{\dot{B}^s_{p,q}(H_N)}$.
Hereafter, for $F\in \mathcal Z^\prime(\Omega)$ 
we denote by $\tilde{F}\in 
\mathcal X^\prime(\Omega)$ an 
extension of $F$. Then we have $J(\tilde{F})=F.$ \\

We divide the proof into five steps. \\

{\em First step.} 
We claim that if 
$f \in \dot{B}^s_{p,q}(H_N)(\subset \mathcal Z^\prime(\Omega))$, then
the series 
\begin{equation}\label{EQ:series}
\sum_{j\in\mathbb Z}\phi_j(\sqrt{H_N})\tilde{f}
\end{equation}
converges in $\mathcal X'(\Omega)$ for any 
extension $\tilde{f}$ of $f$. 
In fact, since the high spectrum part of $q_M(f)$ is equivalent to that of $p_M(f)$, the series of the high spectrum part in \eqref{EQ:series} converges in $\mathcal X'(\Omega)$. Hence it suffices to show the convergence of the series of the low spectrum part in \eqref{EQ:series}. 
Thanks to Lemma \ref{lem:5.1}, 
we write 
\begin{equation}\label{EQ:511}
\begin{split}
\sum_{j\le 0}
\big|{}_{\mathcal{X}'(\Omega)} \langle \phi_j(\sqrt{H_N})\tilde{f}, g \rangle_{\mathcal{X}(\Omega)}\big|
& =
\sum_{j\le 0}
\big|{}_{\mathcal{X}'(\Omega)} \langle \tilde{f}, \phi_j(\sqrt{H_N})g \rangle_{\mathcal{X}(\Omega)}\big|\\
& =
\sum_{j\le 0}
\big|{}_{\mathcal{Z}'(\Omega)} \langle f, \phi_j(\sqrt{H_N})g \rangle_{\mathcal{Z}(\Omega)}\big|
\end{split}
\end{equation}
for any $g\in\mathcal X(\Omega)$. Here, putting
\[
\Phi_j= \phi_{j-1}+\phi_j+\phi_{j+1},
\]
we have 
\begin{equation}\label{EQ:Paley}
\phi_j\Phi_j= \phi_j.
\end{equation}
Then   
we deduce from \eqref{EQ:phi-Lp} 
for $p=q=1$ and $\alpha=0$
in Proposition \ref{cor:Lp} that
\begin{equation}\label{EQ:512}
\begin{split}
\sum_{j\le 0}
\big|{}_{\mathcal{Z}'(\Omega)} \langle f, \phi_j(\sqrt{H_N})g \rangle_{\mathcal{Z}(\Omega)}\big|
& =
\sum_{j\le 0}
\big|{}_{\mathcal{Z}'(\Omega)} \langle \phi_j(\sqrt{H_N})f, \Phi_j(\sqrt{H_N})g \rangle_{\mathcal{Z}(\Omega)}\big|\\
& \le 
\sum_{j\le 0}\| \phi_j(\sqrt{H_N})f \| _{L^\infty(\Omega)}
\|\Phi_j(\sqrt{H_N})g\|_{L^1(\Omega)}\\
& \le C
\sum_{j\le 0}\| \phi_j(\sqrt{H_N})f \| _{L^\infty(\Omega)}
\|g\|_{L^1(\Omega)}
\end{split}
\end{equation}
for any $g \in\mathcal X(\Omega)$. 
As to the first factor in the right member 
of \eqref{EQ:512},
by using the identities \eqref{EQ:Paley},
we write
\[
\| \phi_j(\sqrt{H_N})f \| _{L^\infty(\Omega)}
=\|
\Phi_j(\sqrt{H_N}) \phi_j(\sqrt{H_N})f \| _{L^\infty(\Omega)}.
\]
Then, thanks to \eqref{EQ:phi-Lp2} 
for $q=\infty$ and $\alpha=0$ 
in Proposition \ref{cor:Lp}, we estimate 
\begin{equation}\label{EQ:111}
\begin{split}
\sum_{j\le 0}\| \phi_j(\sqrt{H_N})f \| _{L^\infty(\Omega)}
& \le C\sum_{j\le 0} 2^{\frac{n}{p}j} 
e^{-\mu 2^{-j}}\| \phi_j(\sqrt{H_N})f 
\|_{L^p(\Omega)}\\
& \le C \left(\sum_{j\le 0} 2^{\frac{n}{p}j}
e^{-\mu 2^{-j}}2^{-sj} 
\right)\cdot \sup_{j\le 0} 2^{sj}\| \phi_j(\sqrt{H_N})f \| _{L^p(\Omega)}\\
& \le C \|f\|_{\dot{B}^s_{p,\infty}(H_N)}\\
& \le C \|f\|_{\dot{B}^s_{p,q}(H_N)},
\end{split}
\end{equation}
where we used the embedding relation in the assertion (ii) from Proposition \ref{prop:property} in the last step. Summarizing \eqref{EQ:511}, \eqref{EQ:512} and \eqref{EQ:111}, we conclude that the series of the low spectrum part in \eqref{EQ:series} converges in $\mathcal X'(\Omega)$. Hence the claim is proved. \\

{\em Second step.} 
We claim that if 
$f \in \dot{B}^s_{p,q}(H_N)$, then 
\begin{equation}\label{EQ:1-series}
\sum_{j\in\mathbb Z}\phi_j(\sqrt{H_N})\tilde{f}
=
\sum_{k\in\mathbb Z}\phi_k(\sqrt{H_N})
\left(\sum_{j\in\mathbb Z}\phi_j(\sqrt{H_N})\tilde{f}\right)
\quad \text{in $\mathcal X'(\Omega)$}
\end{equation}
for any extension $\tilde{f}$ of $f$. 
Indeed, the previous result assures that 
all the series in \eqref{EQ:1-series}
converge in $\mathcal X'(\Omega)$.
Since 
\[
\phi_k(\lambda)
=
\phi_k(\lambda)
\sum_{j\in\mathbb Z}\phi_j(\lambda), \quad \lambda>0
\]
for any $k\in \mathbb Z$, it follows 
that
\[
\phi_k(\sqrt{H_N})\tilde{f}
=
\phi_k(\sqrt{H_N})
\left(\sum_{j\in\mathbb Z}\phi_j(\sqrt{H_N})\tilde{f}\right)
\quad \text{in $\mathcal X'(\Omega)$}
\]
for any $k\in\mathbb Z$.
This proves \eqref{EQ:1-series}. \\

{\em Third step.} 
We claim that if 
$f \in \dot{B}^s_{p,q}(H_N)$, then 
\begin{equation}\label{EQ:22-Besov}
J\left(\sum_{j\in\mathbb Z}\phi_j(\sqrt{H_N})\tilde{f}\right)\in \dot{B}^s_{p,q}(H_N)
\end{equation}
for any extension $\tilde{f}$ of $f$. 
In fact, since $J$ is continuous from 
$\mathcal{X}^\prime(\Omega)$ to 
$\mathcal{Z}^\prime(\Omega)$, it follows that
\[
J\left(\sum_{j\in\mathbb Z}\phi_j(\sqrt{H_N})\tilde{f}\right)=\sum_{j\in\mathbb Z}\phi_j(\sqrt{H_N})J(\tilde{f})
=\sum_{j\in\mathbb Z}\phi_j(\sqrt{H_N})f 
\quad \text{in $\mathcal{Z}^\prime(\Omega)$.}
\]
Here, thanks to part (ii) of Proposition 
\ref{prop:appro}, we have 
\[
\sum_{j\in\mathbb Z}\phi_j(\sqrt{H_N})f=f
\quad \text{in $\mathcal{Z}^\prime(\Omega)$.}
\]
Thus, combining the above two equations, we conclude \eqref{EQ:22-Besov}.\\

{\em Fourth step.} 
We claim that if $f\in \dot{B}^s_{p,q}(H_N)$, then
\begin{equation}\label{EQ:ext}
\tilde{f}_1-\tilde{f}_2\in \mathcal P(\Omega)
\end{equation}
for any extensions $\tilde{f}_1$ and 
$\tilde{f}_2$ of $f$. Indeed, since 
$J(\tilde{f}_1)=J(\tilde{f}_2)=f$,
we see that
\[
{}_{\mathcal{Z}'(\Omega)} \big\langle 
(J(\tilde{f}_1-\tilde{f}_2),g 
\big\rangle_{\mathcal{Z}(\Omega)}
= 
{}_{\mathcal{Z}'(\Omega)} \langle f-f , g \rangle_{\mathcal{Z}(\Omega)}
=0
\]
for any $g \in \mathcal{Z}(\Omega)$, which implies \eqref{EQ:ext} by the definition of $\mathcal P(\Omega)$ (see \eqref{EQ:Pdef}). \\

{\em End of the proof.} 
Taking account into the previous steps, 
we observe that the mapping
\[
T: f \in \dot{B}^s_{p,q}(H_N) \mapsto 
\sum_{j\in\mathbb Z}\phi_j(\sqrt{H_N})\tilde{f} \in \dot{X}^s_{p,q}(H_N)
\]
is well-defined. Indeed, thanks to \eqref{EQ:ext},
we deduce from the part (i-b) in 
Proposition \ref{prop:P} that 
\[
\sum_{j\in\mathbb Z}\phi_j(\sqrt{H_N})
(\tilde{f}_1-\tilde{f}_2)=0
\]
for any extensions $\tilde{f}_1$ and 
$\tilde{f}_2$ of $f$. Hence $T(f)$ is determined 
independently of the choice of the extensions of 
$f$. 

Now we prove that $T$ is bijective. 
Let $F\in\dot{X}^s_{p,q}(H_N)$, and define 
$J(F)=f$. Observing from the definition of 
$\dot{X}^s_{p,q}(H_N)$ 
that
\[
\sum_{j\in\mathbb Z}\phi_j(\sqrt{H_N})F=F \quad 
\text{in $\mathcal{X}^\prime(\Omega)$},
\]
and that 
$f \in \dot{B}^s_{p,q}(H_N)$, 
we find that  
\[
F=\sum_{j\in\mathbb Z}\phi_j(\sqrt{H_N})F= T(J(F))=T(f) \quad 
\text{in $\mathcal{X}^\prime(\Omega)$},
\]
which implies that $T$ is surjective. 
It remains to show that $T$ is injective.
Let $f\in \dot{B}^s_{p,q}(H_N)$ be such that
$T(f)=0$. 
Then any extension $\tilde{f}$ of $f$ satisfies 
\[
\sum_{j\in\mathbb Z}\phi_j(\sqrt{H_N})\tilde{f}
=
T(f)=0
\quad \text{in }\mathcal X'(\Omega).
\]
Hence it follows from the assertion (ii) in Proposition \ref{prop:appro} that 
\begin{equation*}
\begin{split}
{}_{\mathcal{Z}'(\Omega)} \langle J(\tilde{f}), g \rangle_{\mathcal{Z}(\Omega)}&=
{}_{\mathcal{Z}'(\Omega)} \langle f, g \rangle_{\mathcal{Z}(\Omega)}\\
& =
\sum_{j\in\mathbb Z}
{}_{\mathcal{Z}'(\Omega)} \langle \phi_j(\sqrt{H_N})f, g \rangle_{\mathcal{Z}(\Omega)}\\
& =
\sum_{j\in\mathbb Z}
{}_{\mathcal{X}'(\Omega)} \langle \phi_j(\sqrt{H_N})\tilde{f}, g \rangle_{\mathcal{X}(\Omega)}\\
& =0
\end{split}
\end{equation*}
for any $g\in\mathcal Z(\Omega)$, which implies that $\tilde{f} \in \mathcal P(\Omega)$ by the definition of $\mathcal P(\Omega)$. Therefore, we conclude from 
the assertion (i-c) in Proposition \ref{prop:P} that $f=J(\tilde{f}) = 0$ in $\dot{B}^s_{p,q}(H_N)$, and hence $T$ is injective. Thus $T$ is bijective. 

It is clear that 
the norms of $\dot{B}^s_{p,q}(H_N)$ and those of $\dot{X}^s_{p,q}(H_N)$ are equivalent. Thus we conclude that $T$ is isomorphism between  
$\dot{B}^s_{p,q}(H_N)$ and $\dot{X}^s_{p,q}(H_N)$. The proof of Theorem \ref{prop:iso} is finished.
\end{proof}


\section{A remark on bilinear estimates}\label{sec:7}
In \cite{IMT-bilinear} we proved the bilinear estimates in Besov spaces generated by 
the Dirichlet Laplacian. 
In this section we shall discuss the version of Neumann Laplacian. 
Observing the argument in \cite{IMT-bilinear}, we see that the gradient estimate 
\eqref{EQ:final} in \S \ref{sec:4} plays an important role in proving the bilinear estimates.\\

Based on these considerations, 
we shall prove here the following. 

\begin{thm}\label{thm:bilinear}
Assume that $\Omega$ 
is a Lipschitz domain in $\R^n$ with compact boundary, where $n\ge3$ if 
$\Omega$ is unbounded, and $n\ge1$ if $\Omega$ is bounded. 
Let $0<s<2$ and $p$, $p_1,p_2,p_3,p_4$ and 
$q$ be such that
$$
1\le p, p_1,p_2,p_3,p_4, q\le \infty
\quad \text{and}
\quad \frac{1}{p}=\frac{1}{p_1}+\frac{1}{p_2}=\frac{1}{p_3}+\frac{1}{p_4}.
$$
Then the following assertions hold{\rm :}
\begin{itemize}
\item[(i)] 
Assume further that $\Omega$ is a domain such that the gradient estimate \eqref{EQ:final} holds for any $0<t\le1$. Then 
there exists a constant $C>0$ such that 
\begin{equation}\label{EQ:bilinear1}
\|fg\|_{B^s_{p,q}(H_N)} 
\le 
C\left(
\|f\|_{B^s_{p_1,q}(H_N)}
\|g\|_{L^{p_2}(\Omega)}
+
\|f\|_{L^{p_3}(\Omega)}
\|g\|_{B^s_{p_4,q}(H_N)}
\right)
\end{equation}
for any 
$f\in B^s_{p_1, q}(H_N)\cap L^{p_3}(\Omega)$ 
and $g\in B^s_{p_4, q}(H_N) \cap L^{p_2}(\Omega)$. 
\item[(ii)] 
Assume further that $\Omega$ is a domain such that the gradient estimate \eqref{EQ:final} holds for any $t>0$. Then 
there exists a constant $C>0$ such that 
\begin{equation}\label{EQ:bilinear2}
\|fg\|_{\dot{B}^s_{p,q}(H_N)} 
\le 
C\left(
\|f\|_{\dot{B}^s_{p_1,q}(H_N)}
\|g\|_{L^{p_2}(\Omega)}
+
\|f\|_{L^{p_3}(\Omega)}
\|g\|_{\dot{B}^s_{p_4,q}(H_N)}
\right)
\end{equation}
for any 
$f\in \dot{B}^s_{p_1, q}(H_N)\cap L^{p_3}(\Omega)$ and $g\in \dot{B}^s_{p_4, q}(H_N)\cap 
L^{p_2}(\Omega)$. 
\end{itemize}
\end{thm}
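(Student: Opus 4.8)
The plan is to follow the scheme of \cite{IMT-bilinear}, replacing the Dirichlet Laplacian estimates used there by their Neumann counterparts established in \S\ref{sec:3} and \S\ref{sec:4}. First I would reduce both assertions to a single Littlewood--Paley estimate. Writing $f_j:=\phi_j(\sqrt{H_N})f$ and $g_k:=\phi_k(\sqrt{H_N})g$ (together with $\psi(H_N)f$, $\psi(H_N)g$ in the inhomogeneous case), I decompose the product $fg$ according to the relative size of the frequencies, i.e.\ into a "high--low'', "low--high'', and "high--high'' (diagonal) interaction, exactly as in the classical paracalculus. For each block one applies $\phi_\ell(\sqrt{H_N})$ and estimates its $L^p(\Omega)$-norm; summing over $\ell$ against the weight $2^{s\ell}$ in $\ell^q$ gives the claimed bound. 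The reduction is formal once one knows the two families of building-block estimates: the $L^{p}$--$L^{q}$ bounds \eqref{EQ:phi-Lp} (resp.\ \eqref{EQ:phi-Lp2} for bounded $\Omega$) from Proposition \ref{cor:Lp}, and the gradient bounds \eqref{EQ:psi-gra}, \eqref{EQ:phi-gra} from Proposition \ref{cor:gradient}.

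The key analytic step is the estimate of a single frequency block. For the high--low and low--high pieces one uses that $\phi_\ell(\sqrt{H_N})(f_j g_{\le j-C})$ vanishes unless $|\ell-j|\le C$, so that $\sum_\ell 2^{s\ell}\|\cdot\|_{L^p}$ is controlled by $\|\{2^{sj}\|f_j\|_{L^{p_1}}\}\|_{\ell^q}\,\sup_k\|g_k\|_{L^{p_2}}$ via H\"older on $\Omega$; here $\sup_k\|g_{\le j}\|_{L^{p_2}}\lesssim\|g\|_{L^{p_2}}$ follows from the uniform $L^{p_2}$-boundedness of the partial sums of the Littlewood--Paley decomposition, which is itself a consequence of \eqref{EQ:phi-Lp}. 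The diagonal (high--high) term is where the restriction $s<2$ and the gradient estimates enter: after applying $\phi_\ell(\sqrt{H_N})$ one rewrites it using $H_N=-\Delta$ and integrates by parts, trading a factor $2^{2\ell}$ (lost when $\ell$ is below the common frequency $j$) against two derivatives distributed as $\nabla f_j\cdot\nabla g_j$ plus lower-order terms; the gradient bounds \eqref{EQ:phi-gra} supply the needed $\|\nabla f_j\|_{L^{p_1}}\lesssim 2^j\|f_j\|_{L^{p_1}}$, and convergence of $\sum_{\ell\le j}2^{(s-2)\ell}$ requires precisely $s<2$ (and $s>0$ for the inhomogeneous endpoint bookkeeping). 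The inhomogeneous case additionally needs the $\psi$-block estimates \eqref{EQ:psi-Lp} and \eqref{EQ:psi-gra}, handled identically.

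I expect the main obstacle to be purely organisational rather than conceptual: one must carefully separate the bounded and unbounded cases, since for bounded $\Omega$ the low-frequency blocks require the exponentially-decaying estimates \eqref{EQ:phi-Lp2} and \eqref{EQ:psi-Lp2} and the gradient estimate \eqref{EQ:final} is only assumed for $0<t\le1$ in part (i) but for all $t>0$ in part (ii) — matching respectively the inhomogeneous and homogeneous conclusions. A further subtlety is making sense of the product $fg$ and of the identity $fg=\sum_\ell\phi_\ell(\sqrt{H_N})(fg)$ in $\mathcal X'(\Omega)$ (resp.\ $\mathcal Z'(\Omega)$): this uses Proposition \ref{prop:appro} together with the embeddings $B^s_{p,q}(H_N)\hookrightarrow\mathcal X'(\Omega)$ and the characterisation in Theorem \ref{prop:iso}, so that the formal paraproduct manipulations are legitimate. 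Since every ingredient — the $L^p$--$L^q$ multiplier bounds, the gradient bounds, and the distributional framework — is already in place from the preceding sections, the proof runs parallel to \cite{IMT-bilinear}, and I would present it by indicating the decomposition, estimating the three types of blocks, and then summing; the details being routine, I would refer to \cite{IMT-bilinear} for the remaining computations.
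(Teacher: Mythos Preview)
Your proposal is correct and follows essentially the same approach as the paper: the paper's proof simply observes that the gradient estimates of Proposition \ref{cor:gradient} are now available and refers to \cite{IMT-bilinear} for the remainder, omitting all details. Your write-up is a faithful (and more explicit) unpacking of what that referral entails, so it aligns with the paper's argument.
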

\begin{proof}
Since the gradient estimates are established 
in Proposition \ref{cor:gradient}, the proof is performed by a 
similar argument as in the Dirichlet Laplacian case \cite{IMT-bilinear}. 
So we may omit the details. 
\end{proof}

\appendix
\section{} 
\label{App:AppendixB}
In this appendix we shall prove the following.
\begin{prop}\label{prop:B}
Let $f \in \mathcal S(\mathbb R^n)$. Then the following assertions are equivalent{\rm :}
\begin{itemize}
\item[(i)] 
$\displaystyle \sup_{j \leq 0} 2^{ M |j|} \big\| \phi_j \big(\sqrt{-\Delta} \big ) f \big \|_{L^1(\mathbb R^n)} < \infty$ for any $M \in \mathbb N${\rm ;} 
\item[(ii)] 
$\displaystyle \int_{\mathbb R^n} x^\alpha f(x) \, dx =0$ for any $\alpha \in (\mathbb N \cup \{0\})^n$.
\end{itemize}
\end{prop}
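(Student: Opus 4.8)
The plan is to prove the equivalence (i) $\Leftrightarrow$ (ii) by passing through the vanishing of all derivatives of $\widehat{f}$ at the origin, exploiting the fact that on $\mathbb{R}^n$ the operators $\phi_j(\sqrt{-\Delta})$ are Fourier multipliers with symbols supported in the annulus $\{2^{j-1}\le|\xi|\le 2^{j+1}\}$. Recall that condition (ii) is equivalent, via $\int x^\alpha f(x)\,dx = c_\alpha (\partial^\alpha \widehat{f})(0)$, to the statement that $\widehat{f}$ vanishes to infinite order at $\xi=0$; since $\widehat{f}\in\mathcal{S}(\mathbb{R}^n)$, this is in turn equivalent to the estimate $|\widehat{f}(\xi)|\le C_N|\xi|^N$ near the origin for every $N$ (with analogous bounds for derivatives, by Taylor's theorem). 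So the real content is to match this infinite-order vanishing of $\widehat{f}$ at $0$ against the decay of the Littlewood--Paley pieces as $j\to-\infty$.

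\medskip

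\noindent\textbf{Direction (ii) $\Rightarrow$ (i).} Assume $\widehat{f}$ vanishes to infinite order at the origin. Write $\phi_j(\sqrt{-\Delta})f = \mathscr{F}^{-1}[\phi_0(2^{-j}|\cdot|)\widehat{f}]$. On the support of $\phi_0(2^{-j}|\cdot|)$ we have $|\xi|\sim 2^j$, so for $j\le 0$ the vanishing of $\widehat f$ gives $|\partial^\beta\widehat f(\xi)|\le C_{N,\beta}2^{j(N-|\beta|)}$ there, for any $N$ and any multi-index $\beta$. The plan is to estimate $\|\phi_j(\sqrt{-\Delta})f\|_{L^1(\mathbb{R}^n)}$ by writing it as $\|(1+|x|^2)^{-n}\cdot(1+|x|^2)^{n}\mathscr{F}^{-1}[\phi_0(2^{-j}|\cdot|)\widehat f]\|_{L^1}$, applying Cauchy--Schwarz (or simply $L^\infty$ control of the weighted function times $L^1$-integrability of $(1+|x|^2)^{-n}$), and using Plancherel together with the identity $(1+|x|^2)^n\mathscr F^{-1}[g]=\mathscr F^{-1}[(1-\Delta_\xi)^n g]$. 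Each $\xi$-derivative falling on $\phi_0(2^{-j}|\cdot|)$ costs a factor $2^{-j}$, but is compensated by the $2^{j}$-sized support and the gain $2^{jN}$ from the vanishing; choosing $N$ large enough relative to $M$ and $n$ yields $\|\phi_j(\sqrt{-\Delta})f\|_{L^1}\le C_M 2^{-M|j|}$ for $j\le 0$, which is (i).

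\medskip

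\noindent\textbf{Direction (i) $\Rightarrow$ (ii).} This is the direction I expect to be the main obstacle, since one must recover pointwise infinite-order vanishing of a smooth function from $L^1$-decay of frequency-localized pieces. The plan is: for each multi-index $\alpha$, use that $\int x^\alpha f(x)\,dx$ can be recovered from the low-frequency behavior. Concretely, since $\psi(|\cdot|^2)+\sum_{j\ge1}\phi_j = 1$ and $\sum_{j\in\mathbb{Z}}\phi_j(2^{-j}\lambda)=1$ for $\lambda>0$, one can write, for $k\le 0$, $\widehat f = \big(1-\sum_{j> k}\phi_j(|\cdot|)\big)\widehat f + \sum_{j\le k}\phi_j(|\cdot|)\widehat f$ away from the origin, and control the second sum: by hypothesis (i), $\sum_{j\le k}\|\phi_j(\sqrt{-\Delta})f\|_{L^1}\le C_M 2^{-M|k|}$, hence $\|\sum_{j\le k}\phi_j(|\cdot|)\widehat f\|_{L^\infty}\to 0$ faster than any power of $2^{k}$ as $k\to-\infty$. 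Since $\widehat f$ is continuous and the first term is supported in $\{|\xi|\gtrsim 2^{k}\}$, letting $k\to-\infty$ forces $\widehat f$, and then each $\partial^\alpha\widehat f$ (by applying the same argument to the Schwartz functions $x^\alpha f$, using that $\phi_j(\sqrt{-\Delta})$ commutes with the structure appropriately, or by a direct difference-quotient argument), to vanish at $\xi=0$. Translating back via $\partial^\alpha\widehat f(0)=c_\alpha\int x^\alpha f(x)\,dx$ gives (ii). The delicate point is justifying the exchange of the infinite sum with the $L^\infty$ bound and handling the multi-index $\alpha$ uniformly; this is where one should mirror the corresponding argument for $\mathcal{S}_0(\mathbb{R}^n)$ in the classical theory (cf. the characterization of $\mathcal S_0$ used in \cite{IMT-bilinear}).
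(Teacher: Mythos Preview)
Your proposal is correct, and in the direction (i) $\Rightarrow$ (ii) it is essentially the paper's argument: the paper also passes through the Hausdorff--Young bound $\|\phi_j(|\cdot|)\widehat f\|_{L^\infty}\le C\|\phi_j(\sqrt{-\Delta})f\|_{L^1}$ to obtain $|\widehat f(\xi)|\le C_M|\xi|^M$ near the origin, and then concludes $\partial^\alpha\widehat f(0)=0$ for all $\alpha$ from the $C^\infty$-regularity of $\widehat f$ via Taylor's theorem. One warning: your first suggestion of ``applying the same argument to the Schwartz functions $x^\alpha f$'' would not work as stated, since $\phi_j(\sqrt{-\Delta})$ does not commute with multiplication by $x^\alpha$ and there is no reason why (i) should hold for $x^\alpha f$. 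Fortunately you do not need it: the bound $|\widehat f(\xi)|\le C_M|\xi|^M$ for every $M$ already forces all derivatives of $\widehat f$ to vanish at the origin, exactly as in the paper.

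In the direction (ii) $\Rightarrow$ (i) you take a genuinely different and more elementary route than the paper. The paper first proves the intermediate statement $\sup_{j\le0}2^{M|j|}\|\phi_j(|\cdot|)\widehat f\|_{L^\infty}<\infty$ and then upgrades this to the $L^1$-bound (i) by the interpolation
\[
\|\phi_j(\sqrt{-\Delta})f\|_{L^1}\le \|\mathscr F^{-1}[\phi_j(|\cdot|)\widehat f]\|_{L^\infty}^{1/2}\,\|\mathscr F^{-1}[\phi_j(|\cdot|)\widehat f]\|_{L^{1/2}}^{1/2},
\]
controlling the second factor uniformly in $j$ by a quasi-Banach Fourier multiplier theorem from Triebel's book. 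Your weighted integration-by-parts argument (writing $(1+|x|^2)^n\mathscr F^{-1}[g]=\mathscr F^{-1}[(1-\Delta_\xi)^ng]$ and exploiting the $2^{jN}$ gain from infinite-order vanishing against the $2^{-j}$ loss per $\xi$-derivative on $\phi_0(2^{-j}|\cdot|)$) avoids this external input entirely and is self-contained. The trade-off is that the paper's two-step structure isolates the intermediate $L^\infty$ characterization as a statement of independent interest, whereas your argument goes straight to the conclusion.
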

 
When $\Omega=\mathbb R^n$, Proposition \ref{prop:B} implies that 
letting $f\in \mathcal S(\mathbb R^n) (\subset \mathcal X(\mathbb R^n))$, we have: 
\[
\text{$f \in \mathcal Z(\mathbb R^n)$ if and only if $f \in \mathcal S_0(\mathbb R^n)$.}
\]
This means that when $\Omega=\R^n$, $\mathcal Z(\mathbb R^n)$ corresponds to $\mathcal S_0(\mathbb R^n)$.

\begin{proof} 
Let $f \in \mathcal S(\mathbb R^n)$. 
We divide the proof into two steps. \\

{\em First step.} 
We prove that the assertion (ii) is equivalent to the following: 
\begin{equation}
\label{EQ:step1}
\sup_{j \leq 0} 2^{ M |j|} \big\| \phi_j (|\cdot|) \mathscr Ff \big \|_{L^\infty(\mathbb R^n)} < \infty 
\quad \text{for any }M \in \mathbb N.
\end{equation}
Indeed, the assertion (ii) implies that 
\[
\partial_\xi^\alpha (\mathscr Ff)(0)
=
\int_{\mathbb R^n} x^\alpha f(x) \, dx
= 0\quad \text{for any }\alpha \in (\mathbb N \cup \{0\})^n.
\]
Hence it follows that
\begin{equation}\label{EQ:Fourier}
|\mathscr Ff(\xi)| \le C|\xi|^M,\quad |\xi|\le 2
\end{equation}
for any $M\in\mathbb N$. 
Here, since
\begin{equation}\label{EQ:supp}
\supp{\phi_j}= \{2^{j-1}\le|\xi|\le 2^{j+1}\},
\end{equation}
it follows that 
\[
\phi_j(|\xi|)|\xi|^M\le C 2^{Mj}\quad 
\text{on the support of $\phi_j$}
\]
for any $j \le 0$ and $M\in\mathbb N$.
Therefore we deduce from \eqref{EQ:Fourier} that
\[
\big|\phi_j (|\xi|) \mathscr Ff(\xi) \big|
\le C2^{Mj}
,\quad \xi\in\R^n
\]
for any $j \le 0$ and $M\in\mathbb N$, which implies \eqref{EQ:step1}. 
Conversely, we suppose \eqref{EQ:step1}. Then 
\[
\big|\phi_j (|\xi|) \mathscr Ff(\xi) \big|
\le C2^{Mj}
\le C|\xi|^M\quad 
\text{on the support of $\phi_j$}
\]
for any $j \le 0$ and $M\in \mathbb N$, which implies that
\begin{equation}\label{EQ:Cinfty}
|\mathscr Ff(\xi)| \le C|\xi|^M, \quad |\xi|\le 2
\end{equation}
for any $M\in \mathbb N$. Since $\mathscr Ff(\xi)$ is $C^\infty$ on 
$\R^n$, we conclude from \eqref{EQ:Cinfty} that 
\[
\partial_\xi^\alpha (\mathscr Ff)(0)
= 0\quad \text{for any }\alpha \in (\mathbb N \cup \{0\})^n,
\]
which implies the assertion (ii). Thus the equivalence between 
(ii) and \eqref{EQ:step1} is proved. \\

{\em Second step.} 
It is sufficient to show that 
the assertion (i) is equivalent to \eqref{EQ:step1} 
by the first step. 
Suppose (i). Then, by $L^1$-$L^\infty$-boundedness of the Fourier transform $\mathscr F$, 
we find that  
\[
\big\| \phi_j (|\cdot|) \mathscr Ff \big \|_{L^\infty(\mathbb R^n)}
=
\big\| \mathscr F\big[ \phi_j (\sqrt{-\Delta}) f \big] \big \|_{L^\infty(\mathbb R^n)}
\le C \|
\phi_j (\sqrt{-\Delta}) f\|_{L^1(\mathbb R^n)}
\]
for any $j \le 0$. Hence, multiplying $2^{M|j|}$ to the both sides and 
taking the supremum with respect to $j\le0$, we get \eqref{EQ:step1}.
Conversely, 
we suppose that \eqref{EQ:step1} holds. 
We estimate 
\begin{equation}\label{EQ:B-1}
\begin{split}
\|\phi_j (\sqrt{-\Delta}) f\|_{L^1(\mathbb R^n)}
= &
\big\|\mathscr F^{-1} 
\big[\phi_j (|\cdot|) \mathscr Ff \big] 
\big\|_{L^1(\mathbb R^n)}\\
\le &
\big\|\mathscr F^{-1} 
\big[\phi_j (|\cdot|) \mathscr Ff \big] 
\big\|^{\frac{1}{2}}_{L^\infty(\mathbb R^n)}
\big\|\mathscr F^{-1} 
\big[\phi_j (|\cdot|) \mathscr Ff \big] 
\big\|^{\frac{1}{2}}_{L^\frac{1}{2}(\mathbb R^n)}
\end{split}
\end{equation}
As to the first factor in the right member of \eqref{EQ:B-1}, 
noting that 
\[
\supp{\phi_j} \subset \{|\xi|\le 2\} \quad \text{for }j\le0,
\]
we deduce from $L^1$-$L^\infty$-boundedness of $\mathscr F^{-1}$
that there exists a constant $C>0$ such that
\begin{equation}\label{EQ:B-2}
\begin{split}
\big\|\mathscr F^{-1} 
\big[\phi_j (|\cdot|) \mathscr Ff \big] 
\big\|_{L^\infty(\mathbb R^n)}
& \le 
\big\| \phi_j (|\cdot|) \mathscr Ff \big \|_{L^1(\mathbb R^n)}\\
& \le 
C \big\| \phi_j (|\cdot|) \mathscr Ff \big \|_{L^\infty(\mathbb R^n)}
\end{split}
\end{equation}
for any $j \le 0$. 
As to the second factor, applying  
Theorem in Section 1.5.2 in \cite{Triebel_1983} to this factor, we 
find that 
\begin{equation}\label{EQ:B-3}
\big\|\mathscr F^{-1} 
\big[\phi_j (|\cdot|) \mathscr Ff \big] 
\big\|_{L^\frac{1}{2}(\mathbb R^n)}
\le 
C\|f\|_{L^\frac{1}{2}(\mathbb R^n)}
\end{equation}
for any $j \le 0$, where $C$ is independent of $j$. Hence, combining \eqref{EQ:B-1}, \eqref{EQ:B-2} and \eqref{EQ:B-3}, 
we conclude the assertion (i). 
The proof of Proposition \ref{prop:B} is finished.
\end{proof}


 \section{} 
 \label{App:AppendixA}

In this appendix we state two lemmas. 
The first one states that operators $\phi(\theta H_N)$ and $\nabla \phi(\theta H_N)$ belong to $\mathcal A_{\alpha,\theta}$.

\begin{lem}\label{lem:A-bdd}
Assume that 
$\Omega$ 
is a Lipschitz domain in $\R^n$. 
Let $\phi\in\mathcal S(\mathbb R)$. Then the operators $\phi(\theta H_N)$ and $\nabla \phi(\theta H_N)$ belong to $\mathcal A_{\alpha,\theta}$ for any $\alpha>0$ and $\theta>0$.  Furthermore, there exists a constant $C>0$ such that 
\begin{equation}\label{EQ:A-bdd_1}
\vertiii{\phi(\theta H_N)}_{\alpha,\theta}
\le C \theta^{\frac{\alpha}{2}},
\end{equation}
\begin{equation}\label{EQ:A-bdd_2}
\vertiii{\nabla \phi(\theta H_N)}_{\alpha,\theta}
\le C \theta^{\frac{\alpha-1}{2}}
\end{equation}
for any $\theta>0$.
\end{lem}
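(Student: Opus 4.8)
The statement to be established is Lemma~\ref{lem:A-bdd}: for $\phi \in \mathcal S(\mathbb R)$, the operators $\phi(\theta H_N)$ and $\nabla\phi(\theta H_N)$ lie in $\mathcal A_{\alpha,\theta}$ with the stated bounds. The plan is to reduce everything to the Gaussian upper bounds for the heat kernel (Lemma~\ref{lem:Gauss}) via the subordination formula expressing $\phi(\theta H_N)$ as a superposition of $e^{-t\theta H_N}$, then to control the weight $|\cdot - \theta^{1/2}m|^\alpha$ by the Gaussian decay of the kernel. Concretely, since $\phi \in \mathcal S(\mathbb R)$, I would first write $\phi$ through (a variant of) the Fourier inversion / Mellin-type representation $\phi(\theta H_N) = \int_0^\infty e^{-t\theta H_N}\, d\nu_\phi(t)$ for a suitable measure $\nu_\phi$ with rapidly decaying density (or, more elementarily, use the resolvent representation $\phi(\lambda) = (\lambda+1)^{-N}\tilde\phi(\lambda)$ with $\tilde\phi$ still Schwartz and iterate the heat-semigroup formula for $(\theta H_N + 1)^{-N}$ as in \S\ref{sec:3.1}). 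The essential point is that the integral kernel $K_\theta(x,y)$ of $\phi(\theta H_N)$ inherits a bound of the form $|K_\theta(x,y)| \le C_k \theta^{-n/2}\big(1 + |x-y|^2/\theta\big)^{-k}$ for every $k$, uniformly in $\theta>0$ when $\Omega$ is unbounded, and for $0<\theta\le1$ (with an extra harmless factor) when $\Omega$ is bounded; this follows by integrating \eqref{EQ:Gauss2}/\eqref{EQ:Gauss1} against the decaying density.

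Granting such pointwise kernel bounds, the core estimate is a Schur-type argument. To bound $\big\||\cdot - \theta^{1/2}m|^\alpha\, \phi(\theta H_N)\chi_{C_\theta(m)}\big\|_{\mathcal B(L^2(\Omega))}$ uniformly in $m$, observe that for $x\in\Omega$ and $y\in C_\theta(m)$ one has $|y - \theta^{1/2}m|\le \tfrac{\sqrt n}{2}\theta^{1/2}$, so $|x-\theta^{1/2}m| \le |x-y| + C\theta^{1/2}$ and hence $|x-\theta^{1/2}m|^\alpha \le C_\alpha\big(|x-y|^\alpha + \theta^{\alpha/2}\big)$. The kernel of the weighted operator is therefore dominated by $\theta^{\alpha/2}$ times $C_k\theta^{-n/2}(1+|x-y|^2/\theta)^{-k}\big(1 + (|x-y|^2/\theta)^{\alpha/2}\big)$, which for $k$ chosen large enough (say $k > (n+\alpha)/2 + 1$) is an integrable convolution-type kernel on $\R^n$ (extending by zero outside $\Omega$) with $L^1(dx)$ and $L^1(dy)$ norms bounded by $C\theta^{\alpha/2}$, uniformly in $m$. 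The Schur test then gives $\vertiii{\phi(\theta H_N)}_{\alpha,\theta}\le C\theta^{\alpha/2}$, which is \eqref{EQ:A-bdd_1}.

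For \eqref{EQ:A-bdd_2}, the same scheme applies once one has the corresponding kernel bound for $\nabla\phi(\theta H_N)$, namely $|\nabla_x K_\theta(x,y)| \le C_k\theta^{-(n+1)/2}(1+|x-y|^2/\theta)^{-k}$. On a Lipschitz domain the gradient heat-kernel estimates are not available in general, but they are not needed in this strong pointwise form: it suffices to note that $\nabla\phi(\theta H_N) = \nabla e^{-\theta H_N}\cdot \big(e^{\theta H_N}\phi(\theta H_N)\big)$ is not quite legitimate, so instead I would use $\nabla\phi(\theta H_N) = \theta^{-1/2}\Psi(\theta H_N)$ where $\Psi(\lambda) = \lambda^{1/2}\psi(\lambda)$ with $\psi$ Schwartz — no, rather I would exploit the factorization through $(\theta H_N)^{1/2}$: writing $\phi = \eta^2$-type splits is awkward, so the cleanest route is to combine the $L^2$-identity $\|\nabla u\|_{L^2}^2 = \langle H_N u, u\rangle$ with the $\mathcal A_{\alpha,\theta}$-membership of the two factors $(\theta H_N)^{1/2}\phi_1(\theta H_N)$ and $\phi_2(\theta H_N)$ in a suitable product decomposition $\phi = \phi_1\phi_2$ — tracking the extra $\theta^{-1/2}$ from the $H_N^{1/2}$ and verifying it combines with $\theta^{\alpha/2}$ to give $\theta^{(\alpha-1)/2}$. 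This is exactly the mechanism used later for \eqref{EQ:am-bdd-g_1}.

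\textbf{Main obstacle.} The delicate point is justifying the uniform (in $\theta$, and with respect to the weight $|x-y|^\alpha$) pointwise kernel bounds for $\phi(\theta H_N)$ from the Gaussian bounds of Lemma~\ref{lem:Gauss}, since $\phi\in\mathcal S(\mathbb R)$ need not have compactly supported or otherwise specially structured spectrum; one must pass through a representation of $\phi$ in terms of the heat semigroup with a density decaying fast enough to absorb the $\theta^{-n/2}$ and $|x-y|^\alpha$ factors, and handle the bounded-domain case (where \eqref{EQ:Gauss1} carries the $\max\{t^{-n/2},1\}$ factor) by restricting to $0<\theta\le1$ and noting the claim need only hold there — or by using a $C_0^\infty((0,\infty))$ cutoff and \eqref{EQ:Gauss3} for the complementary piece. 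Everything downstream (the weight splitting and the Schur test) is routine once these kernel estimates are in hand; the gradient version for \eqref{EQ:A-bdd_2} adds only the bookkeeping of one power of $\theta^{-1/2}$ via the $H_N^{1/2}$ factorization.
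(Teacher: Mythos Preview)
The paper does not spell out a proof; it refers to Lemmas~6.3 and~7.1 of \cite{IMT-bdd}, whose argument is commutator-based: one represents $\phi(\theta H_N)$ via the Helffer--Sj\"ostrand formula and iterates the identity $[x_j,(\theta H_N-z)^{-1}]=2\theta(\theta H_N-z)^{-1}\partial_j(\theta H_N-z)^{-1}$, each commutator contributing a factor $\theta^{1/2}$. The only Neumann-specific change the paper flags is that $[x_j,H_N]=-2\partial_j$ must be justified on the form domain using the density of $C^\infty_0(\mathbb R^n)|_\Omega$ in $H^1(\Omega)$. Your route to \eqref{EQ:A-bdd_1} via pointwise kernel bounds and the Schur test is different and not fully justified as written: a representation $\phi(\theta H_N)=\int_0^\infty e^{-t\theta H_N}\,d\nu_\phi(t)$ with rapidly decaying $\nu_\phi$ does not exist for a general $\phi\in\mathcal S(\mathbb R)$ (only completely monotone combinations admit such Laplace representations), and your fallback $\phi(\lambda)=(\lambda+1)^{-N}\tilde\phi(\lambda)$ does not yield a pointwise kernel bound for the product, since $\tilde\phi(\theta H_N)$ is merely $L^2$-bounded. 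Polynomial-decay kernel bounds \emph{can} be extracted from the Gaussian estimates of Lemma~\ref{lem:Gauss}, but the standard route is through finite propagation speed of $\cos(t\sqrt{H_N})$, not heat-semigroup subordination.

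The genuine gap is in \eqref{EQ:A-bdd_2}. The operator $\nabla\phi(\theta H_N)$ is not a function of $H_N$, so it cannot be rewritten as $\theta^{-1/2}\Psi(\theta H_N)$; you note this yourself. Your alternative---factoring $\phi=\phi_1\phi_2$ and using the $\mathcal A_{\alpha,\theta}$-membership of $(\theta H_N)^{1/2}\phi_1(\theta H_N)$ and $\phi_2(\theta H_N)$---does not control $\big\||\cdot-\theta^{1/2}m|^\alpha\,\nabla\phi(\theta H_N)\chi_{C_\theta(m)}\big\|_{\mathcal B(L^2)}$, because the weight sits to the \emph{left} of $\nabla$ and does not commute with either $\nabla H_N^{-1/2}$ or $\phi_1(\theta H_N)$; moving it past either factor requires precisely the commutator calculus you are trying to avoid. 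Nor can you appeal to pointwise gradient heat-kernel bounds: on a merely Lipschitz domain these are not available---that is exactly the extra hypothesis \eqref{EQ:final} the paper must \emph{add} in \S\ref{sec:4}, whereas Lemma~\ref{lem:A-bdd} is stated without it. The commutator method sidesteps all of this: the iterated commutators already contain derivatives $\partial_j$, and the outer $\nabla$ is handled by the $L^2$-identity $\|\nabla u\|_{L^2}^2=\langle H_N u,u\rangle$, which costs only a spectral factor $\theta^{-1/2}$ and needs no regularity of $\partial\Omega$ beyond the form-domain description of $H_N$. (Incidentally, your remark that ``this is exactly the mechanism used later for \eqref{EQ:am-bdd-g_1}'' has the dependency reversed: the paper derives \eqref{EQ:am-bdd-g_1} \emph{from} \eqref{EQ:A-bdd_2}, not the other way around.)
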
 

The proof of Lemma \ref{lem:A-bdd} is similar to that of Lemmas 6.3 and 7.1 in 
\cite{IMT-bdd}. 
Here, we use the fact that 
$C^\infty_0(\mathbb R^n)|_{\Omega}$ is dense in $H^1(\Omega)$,
which is the main difference from the previous paper \cite{IMT-bdd}.
Indeed, instead of this fact, in Dirichlet Laplacian case we used the density of 
$C^\infty_0(\Omega)$ 
in $H^1_0(\Omega)$. \\

The second one is the following.

\begin{lem}[Lemma 6.2 in 
\cite{IMT-bdd}] 
\label{lem:suffi}
Let $\Omega$ be an open set in $\R^n$. 
Assume that $\alpha>n/2$ and $\theta>0$. 
If $A\in\mathcal A_{\alpha,\theta}$, then 
there exists a constant $C>0$, depending only on $n$ and $\alpha$, such that 
\begin{equation*}\label{EQ:suffi} 
\|A f\|_{l^1(L^2)_\theta}
\le C \left( \|A\|_{\mathcal{B}(L^2(\Omega))} 
	+ \theta^{-\frac{n}{4}} {\vertiii{A}}^{\frac{n}{2\alpha}}_{\alpha,\theta} \| A \|^{1-\frac{n}{2\alpha}}_{\mathcal{B}(L^2(\Omega))} \right)
\|f\|_{l^1(L^2)_\theta}
\end{equation*}
for any $f \in l^1(L^2)_\theta$.
\end{lem}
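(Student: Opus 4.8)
The plan is to exploit the off-diagonal decay encoded in $\vertiii{A}_{\alpha,\theta}$ together with the $L^2$-orthogonality of the cut-offs $\chi_{C_\theta(m)}$, reducing the $l^1(L^2)_\theta$-bound to a Schur-type summation. First I would reduce to a single input cube. Writing $f=\sum_{m'\in\mathbb Z^n}f_{m'}$ with $f_{m'}=\chi_{C_\theta(m')}f$ (the cubes $C_\theta(m')$ partitioning $\Omega$ up to null sets), the triangle inequality for the $l^1(L^2)_\theta$-norm gives
\[
\|Af\|_{l^1(L^2)_\theta}\le\sum_{m'\in\mathbb Z^n}\|Af_{m'}\|_{l^1(L^2)_\theta},
\]
and since $\sum_{m'}\|f_{m'}\|_{L^2(\Omega)}=\|f\|_{l^1(L^2)_\theta}$, it suffices to prove
\[
\|Ag\|_{l^1(L^2)_\theta}\le C\Big(\|A\|_{\mathcal B(L^2(\Omega))}+\theta^{-\frac n4}\vertiii{A}_{\alpha,\theta}^{\frac{n}{2\alpha}}\|A\|_{\mathcal B(L^2(\Omega))}^{1-\frac{n}{2\alpha}}\Big)\|g\|_{L^2(\Omega)}
\]
uniformly over $g$ supported in one cube $C_\theta(m')$.

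For such a $g$ I would split the sum $\|Ag\|_{l^1(L^2)_\theta}=\sum_m\|\chi_{C_\theta(m)}Ag\|_{L^2(\Omega)}$ at a cutoff radius $R\ge1$ into the near range $|m-m'|\le R$ and the far range $|m-m'|>R$. For the near range I apply Cauchy--Schwarz over the at most $CR^n$ cubes and then the orthogonality $\sum_m\|\chi_{C_\theta(m)}Ag\|_{L^2}^2=\|Ag\|_{L^2}^2\le\|A\|_{\mathcal B(L^2)}^2\|g\|_{L^2}^2$, which yields a bound of order $R^{n/2}\|A\|_{\mathcal B(L^2)}\|g\|_{L^2}$. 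For the far range I invoke the decay: since $g=\chi_{C_\theta(m')}g$, the definition of $\vertiii{A}_{\alpha,\theta}$ gives $\big\||\cdot-\theta^{1/2}m'|^\alpha Ag\big\|_{L^2}\le\vertiii{A}_{\alpha,\theta}\|g\|_{L^2}$, and bounding $|x-\theta^{1/2}m'|$ from below by $c\theta^{1/2}|m-m'|$ on $C_\theta(m)$ produces the weighted orthogonality
\[
\sum_{|m-m'|>R}|m-m'|^{2\alpha}\|\chi_{C_\theta(m)}Ag\|_{L^2}^2\le C\theta^{-\alpha}\vertiii{A}_{\alpha,\theta}^2\|g\|_{L^2}^2 .
\]
Inserting the weight $|m-m'|^{-\alpha}$ and applying Cauchy--Schwarz, the far sum is controlled by $\big(\sum_{|m-m'|>R}|m-m'|^{-2\alpha}\big)^{1/2}\theta^{-\alpha/2}\vertiii{A}_{\alpha,\theta}\|g\|_{L^2}$; here the hypothesis $\alpha>n/2$ is exactly what makes $\sum_{|m-m'|>R}|m-m'|^{-2\alpha}\lesssim R^{\,n-2\alpha}$ convergent, so the far bound is of order $R^{\,n/2-\alpha}\theta^{-\alpha/2}\vertiii{A}_{\alpha,\theta}\|g\|_{L^2}$.

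Finally I would optimize in $R$: balancing $R^{n/2}\|A\|_{\mathcal B(L^2)}$ against $R^{\,n/2-\alpha}\theta^{-\alpha/2}\vertiii{A}_{\alpha,\theta}$ forces the choice $R=\theta^{-1/2}\big(\vertiii{A}_{\alpha,\theta}/\|A\|_{\mathcal B(L^2)}\big)^{1/\alpha}$, which reproduces precisely the term $\theta^{-n/4}\vertiii{A}_{\alpha,\theta}^{n/(2\alpha)}\|A\|_{\mathcal B(L^2)}^{1-n/(2\alpha)}$; when this optimal $R$ is $<1$ one simply takes $R=1$, the near range reduces to finitely many cubes, and the $\|A\|_{\mathcal B(L^2)}$-term dominates. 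Summing the resulting single-cube estimate over $m'$ then closes the bound. I expect the main obstacle to be the far range. The naive route of summing the per-cube operator norms $\|\chi_{C_\theta(m)}A\chi_{C_\theta(m')}\|_{\mathcal B(L^2)}\lesssim\vertiii{A}_{\alpha,\theta}(\theta^{1/2}|m-m'|)^{-\alpha}$ directly in $l^1$ would require $\alpha>n$; the essential point is to keep everything at the $L^2$-square level---orthogonality in the near part, weighted orthogonality in the far part---and to pass to $l^1$ only through Cauchy--Schwarz, so that mere square-summability of $|m-m'|^{-\alpha}$, i.e. $\alpha>n/2$, is enough. Some care is also needed with the geometric lower bound for the weight near the diagonal, but this affects only finitely many cubes and is absorbed into the near range.
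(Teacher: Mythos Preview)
Your argument is correct. The reduction to a single input cube, the near/far splitting at scale $R$, the use of $L^2$-orthogonality $\sum_m\|\chi_{C_\theta(m)}Ag\|_{L^2}^2=\|Ag\|_{L^2}^2$ in the near range, the weighted orthogonality coming from the lower bound $|x-\theta^{1/2}m'|\ge c\,\theta^{1/2}|m-m'|$ on $C_\theta(m)$ in the far range, and the final optimization in $R$ all go through as you describe; the condition $\alpha>n/2$ enters exactly where you say, to make $\sum_{|m|>R}|m|^{-2\alpha}$ finite. Your handling of the case $R_{\mathrm{opt}}<1$ is also fine: in that regime $\theta^{-\alpha/2}\vertiii{A}_{\alpha,\theta}<\|A\|_{\mathcal B(L^2)}$, so the far contribution with $R=1$ is already dominated by the $\|A\|_{\mathcal B(L^2)}$ term.

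There is, however, nothing to compare against in this paper: the lemma is quoted verbatim from \cite{IMT-bdd} (Lemma~6.2 there) and no proof is given here. Your Schur-type argument is the standard one for such commutator/weighted-kernel bounds and is presumably the argument in \cite{IMT-bdd} as well, so there is no meaningful divergence to discuss.
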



\begin{thebibliography}{99}

\bibitem{BenZhe-2010} J. J. Benedetto and S. Zheng,
	 \newblock{{\it Besov spaces for the Schr\"odinger operator with barrier potential}},
	 \newblock{Complex Anal. Oper. Theory}
	 {\bf 4} (2010), no. 4, 777--811.

\bibitem{BuDuYa-2012} H.-Q. Bui, X. T. Duong and L. Yan,
	 \newblock{{\it Calder\'on reproducing formulas and new Besov spaces associated with operators}},
	 \newblock{Adv. Math.} 
	 {\bf 229} (2012), no. 4, 2449--2502.
	 
\bibitem{BuDu-2015} H.-Q. Bui and X. T. Duong,
	 \newblock{{\it Besov and Triebel-Lizorkin spaces associated to Hermite operators}},
	 \newblock{J. Fourier Anal. Appl.} 
	 {\bf 21} (2015), no. 2, 405--448.
	 
\bibitem{CWZ-1994} Z. Q. Chen, R. J. Williams and Z. Zhao,
	 \newblock{{\it A Sobolev inequality and Neumann heat kernel estimate for
   unbounded domains}},
	 \newblock{Math. Res. Lett.} 
	 {\bf 1} (1994), no. 2, 177--184.
	 
\bibitem{CKO-2015} M. Choulli, L. Kayser and E. M. Ouhabaz, 
	 \newblock{{\it Observations on Gaussian upper bounds for Neumann heat kernels}},
	 \newblock{Bull. Aust. Math. Soc.} 
	 {\bf 92} (2015), no. 3, 429--439.

\bibitem{DP-2005} P. D'Ancona and V. Pierfelice, 
	 \newblock{{\it On the wave equation with a large rough potential}},
	 \newblock{J. Functional Analysis} 
	 {\bf 227} (2005), no. 1, 30--77.
	 



\bibitem{GV-2003} V. Georgiev and N. Visciglia, 
	 \newblock{{\it Decay estimates for the wave equation with potential}},
	 \newblock{Comm. Partial Differential Equations} 
	 {\bf 28} (2003), no. 7-8, 1325--1369.	 
	 

	 
\bibitem{Grafakos_2014} L. Grafakos,
	 \newblock{Modern Fourier Analysis}, 3rd ed., 
	 \newblock{Graduate Texts in Mathematics}, vol. 249, 
	 \newblock{Springer, New York}, 2014. 
	 
	 
\bibitem{Ish-2009} K. Ishige, 
	 \newblock{{\it Gradient estimates for the heat equation in the exterior domains
   under the Neumann boundary condition}},
	 \newblock{Differential Integral Equations} 
	 {\bf 22} (2009), no. 5-6, 401--410.

\bibitem{IMT-Besov} T. Iwabuchi, T. Matsuyama and K. Taniguchi,
	 \newblock{{\it Besov spaces on open sets}},
	 \newblock{arXiv:1603.01334} 
	 (2016).
	 
\bibitem{IMT-bilinear} T. Iwabuchi, T. Matsuyama and K. Taniguchi,
	 \newblock{{\it Bilinear estimates in Besov spaces generated by the Dirichlet Laplacian}},
	 \newblock{arXiv:1705.08595} 
	 (2017).
	 
\bibitem{IMT-bdd} T. Iwabuchi, T. Matsuyama and K. Taniguchi,
	 \newblock{{\it Boundedness of spectral multipliers for Schr\"odinger operators on open sets}},
	 \newblock{to appear in Rev. Mat. Iberoam}.
	 
\bibitem{JN-1995} A. Jensen and S. Nakamura,
	 \newblock{{\it $L^p$-mapping properties of functions of Schr\"odinger operators and
   their applications to scattering theory}},
	 \newblock{J. Math. Soc. Japan},
	  {\bf 47} (1995), no. 2, 253--273.
	 
\bibitem{KePe-2015} G. Kerkyacharian and P. Petrushev, 
	 \newblock{{\it Heat kernel based decomposition of spaces of distributions in the framework of Dirichlet spaces}},
	 \newblock{Trans. Amer. Math. Soc.} 
	 {\bf 367} (2015), no. 1, 121--189.


\bibitem{Saw-2016} Y. Sawano,
	 \newblock{{\it An observation of the subspace of $\mathcal S'$}},
	 \newblock{arXiv:1603.07890} 
	 (2016).
	 
\bibitem{Sch_1971} H. H. Schaefer,
	 \newblock{Topological Vector Spaces},
	 \newblock{
	 Graduate Texts in Mathematics, Vol. 3}, 
	 \newblock{Springer-Verlag, New York-Berlin}, 1971.
	 
\bibitem{Tre_1967} F. Tr\`eves,
	 \newblock{Topological Vector Spaces, Distributions and Kernels},
	 \newblock{
	 Graduate Texts in Mathematics, Vol. 3}, 
	 \newblock{Academic Press, New York-London}, 1967.
	 
\bibitem{Triebel_1983} H. Triebel,
	 \newblock{Theory of Function Spaces},
	 \newblock{Monographs in Mathematics, Vol. 78}, 
	 \newblock{Birkh\"auser Verlag, Basel}, 1983.

\bibitem{Triebel_1992} H. Triebel,
	 \newblock{Theory of Function Spaces. II},
	 \newblock{Monographs in Mathematics, Vol. 84}, 
	 \newblock{Birkh\"auser Verlag, Basel}, 1992.

\bibitem{Triebel_2006} H. Triebel,
	 \newblock{Theory of Function Spaces. III},
	 \newblock{Monographs in Mathematics, Vol. 100}, 
	 \newblock{Birkh\"auser Verlag, Basel}, 2006.
	 

	 
\end{thebibliography}
\end{document}